\documentclass[english,reqno]{amsart}

\usepackage[margin=2cm]{geometry}
\usepackage{amsmath,amssymb,amsfonts,mathtools}
\usepackage{xcolor}
\usepackage{tikz}
\usepackage{hyperref}
\usepackage{cleveref}
\hypersetup{hidelinks}

\title[Stable solutions of the Allen--Cahn equation in dimension three]{Stable solutions of the Allen--Cahn equation\\ in dimension three are one-dimensional}
\author[H. Chan]{Hardy Chan}
\address{Departement Mathematik und Informatik, Universit\"at Basel,
Spiegelgasse 1, 4051 Basel, Switzerland}
\email{hardy.chan@unibas.ch}

\author[X. Fern\'andez-Real]{Xavier Fern\'andez-Real}
\address{EPFL SB, Station 8, 1015 Lausanne, Switzerland}
\email{xavier.fernandez-real@epfl.ch}

\author[A. Figalli]{Alessio Figalli}
\address{Department of Mathematics, ETH Z\"urich,
R\"amistrasse 101, 8092 Z\"urich, Switzerland}
\email{alessio.figalli@math.ethz.ch}

\author[E. Florit-Simon]{Enric Florit-Simon}
\address{Department of Mathematics, ETH Z\"urich,
R\"amistrasse 101, 8092 Z\"urich, Switzerland}
\email{enric.florit@math.ethz.ch}

\author[J. Serra]{Joaquim Serra}
\address{Department of Mathematics, ETH Z\"urich,
R\"amistrasse 101, 8092 Z\"urich, Switzerland}
\email{joaquim.serra@math.ethz.ch}
\date{}

\subjclass[2020]{35B08, 35J61; 35B06, 35B35}

\keywords{Allen--Cahn equation, stable solutions, De Giorgi conjecture,
one-dimensional symmetry}

\theoremstyle{plain}
\newtheorem{thm}{Theorem}[section]
\newtheorem{lem}[thm]{Lemma}
\newtheorem{cor}[thm]{Corollary}
\newtheorem{prop}[thm]{Proposition}

\newtheorem*{conjecture}{Conjecture}

\theoremstyle{definition}
\newtheorem{defn}[thm]{Definition}

\theoremstyle{remark}
\newtheorem{remark}[thm]{Remark}
\newtheorem*{remark*}{Remark}

\newcommand{\cD}{\mathcal{D}}

\crefname{thm}{Theorem}{Theorems}
\crefname{lem}{Lemma}{Lemmas}
\crefname{cor}{Corollary}{Corollaries}
\crefname{prop}{Proposition}{Propositions}
\crefname{claim}{Claim}{Claims}
\crefname{conj}{Conjecture}{Conjectures}
\crefname{prob}{Problem}{Problems}
\crefname{defn}{Definition}{Definitions}
\crefname{notation}{Notation}{Notations}
\crefname{remark}{Remark}{Remarks}

\numberwithin{equation}{section}

\crefname{assumption}{Assumption}{Assumptions}
\begin{document}
\begin{abstract}
We prove that every bounded stable solution of the Allen--Cahn equation in $\mathbb R^3$ is one-dimensional. As a consequence, this shows the validity of De Giorgi's conjecture for monotone solutions in dimension four.
\end{abstract}
\maketitle
\section{Introduction}
The Allen--Cahn equation is a fundamental model for phase transitions, whose stationary form reads as
\begin{equation}\label{eq:allen-cahn}
 -\Delta u+W'(u)=0,\qquad W(t)=\tfrac14(1-t^2)^2.
\end{equation}
Originating in the study of interfaces in binary alloys~\cite{AllenCahn1979}, it has become a central link between nonlinear elliptic equations and the geometry of minimal surfaces~\cite{MM1977,Modica1987,Sternberg1988,CC1995,HT2000,PR2003,Ton2005,TW2012}.

Its variational structure is described by the energy
\[
 E(u;\Omega):=\int_\Omega
       \left(\frac12|\nabla u|^2+W(u)\right)\,dx,
\]
where $\Omega\subset\mathbb R^n$ is a bounded domain. The two minima of $W$ correspond to the phases $u=\pm1$, while the level sets of $u$ describe the interfaces between them.

A fundamental question, deeply related to De Giorgi's original conjecture, is how much the \textit{stability} of such a configuration tells us about the geometry of its interfaces. In this paper we classify bounded stable solutions in dimension three, which implies the validity of De Giorgi's conjecture in dimension four as a well-known consequence.

\subsection{De Giorgi's conjecture and its stable version}

De Giorgi's conjecture, proposed in 1978~\cite{DeG1979}, is a phase transition analogue of the Bernstein problem for minimal graphs.

\begin{conjecture}[De Giorgi]
Let $u:\mathbb R^n\to(-1,1)$ be a classical solution of \eqref{eq:allen-cahn} satisfying $\partial_{x_n}u>0$. If $n\leq8$, then $u$ is one-dimensional: there exist $e\in\mathbb S^{n-1}$ and $\phi:\mathbb R\to(-1,1)$ such that
\[
 u(x)=\phi(e\cdot x).
\]
\end{conjecture}
In simple terms, monotonicity in a single direction is expected to force all the level sets to be parallel hyperplanes. The conjecture was proved in dimension two by Ghoussoub and Gui~\cite{GG1998}, and in dimension three by Ambrosio and Cabr\'e~\cite{AC2000}. A major advance in higher dimensions came with the work of Savin~\cite{Savin2009}, who proved the conjecture up to dimension eight under the additional assumption that the solution tends to $\pm1$ at the two ends of the monotone direction. His work also gives the classification of \textit{energy-minimizers} up to dimension seven. The dimension threshold in De Giorgi's conjecture is sharp: del Pino, Kowalczyk and Wei~\cite{DPKW2011} constructed counterexamples in dimensions nine and higher.

To place the conjecture in a broader variational setting, recall that a solution is \emph{stable} if the second variation of the energy is nonnegative, namely,
\begin{equation}\label{eq:stability}
 Q_u(\xi):=\int_{\mathbb R^n}
       \bigl(|\nabla\xi|^2+W''(u)\xi^2\bigr)\,dx\geq0
 \qquad\text{for every }\xi\in C_c^1(\mathbb R^n).
\end{equation}
Since strictly monotone solutions are in particular stable~\cite{AAC2001}, one is naturally led to the \emph{stable De Giorgi conjecture}: every bounded stable solution of \eqref{eq:allen-cahn} should also be one-dimensional in ``low'' dimensions. This conjecture is, in fact, even stronger than suggested here: its validity in dimension $n$ implies De Giorgi’s original conjecture in dimension $n+1$, through the variational arguments of Jerison–Monneau \cite{JM2004} and Savin \cite{Savin2009}. This motivates the following:
\begin{conjecture}[Stable De Giorgi]
Let $u:\mathbb R^n\to(-1,1)$ be a classical solution of \eqref{eq:allen-cahn} satisfying \eqref{eq:stability}. If $n\leq7$, then $u$ is one-dimensional.
\end{conjecture}
As discussed above, a complete resolution of the stable De Giorgi conjecture (that is, up to dimension seven) would prove De Giorgi's original conjecture up to dimension eight. Previously, the stable conjecture was known only in dimension two by the work of Alberti, Ambrosio, and Cabr\'e~\cite{AC2000, AAC2001}. In dimension eight, Pacard and Wei~\cite{PW2013} constructed nontrivial stable solutions, and there are even nontrivial minimizers constructed by Liu, Wang and Wei~\cite{LWW2017}.

Let us note that all these questions belong to a broader program asking when stability forces interfaces to be flat. Recent advances include the classification of complete two-sided stable minimal hypersurfaces up to $\mathbb R^7$~\cite{CL2024, CLMS2026, Mazet2024, HLW2026}, and of global classical stable solutions to the Bernoulli problem in dimensions three~\cite{CFFS2026} and four~\cite{FRS2026}. Related rigidity results for phase transition models were obtained for the half-Laplacian in~\cite{FigalliSerra2020} and for the free boundary Allen--Cahn problem in~\cite[Section 10]{CFFS2026}. Finally, for the classical Allen--Cahn equation itself,~\cite{FS2025} and~\cite{FS2026} established rigidity results in dimension four under additional energy-growth assumptions.

\subsection{The connection to minimal surfaces}

To understand why such a rigidity statement is natural, it is useful to return to the connection with minimal surfaces, i.e. critical points of the area. Modica and Mortola~\cite{MM1977} made the first link by proving that the appropriately rescaled Allen--Cahn energies $\Gamma$-converge to a multiple of the perimeter. This gives a variational basis for the analogy with the Bernstein problem, and suggests that stable solutions should have a close counterpart in stable minimal hypersurfaces. In dimension three, the geometric picture is then especially compelling: complete two-sided stable minimal surfaces are known to be planes, with no area-growth requirement, by the classical work of do Carmo and Peng~\cite{DP1979}, Fischer-Colbrie and Schoen~\cite{FS1980}, and Pogorelov~\cite{Pog1981}.

This analogy already guided many important developments for the Allen--Cahn equation: the regularity theory for minimizing interfaces plays a central role in Savin's result, while Wang and Wei~\cite{WWFiniteEnds2019,WW2019} and Chodosh and Mantoulidis~\cite{CM2020} developed second-order estimates for stable transition layers.

Nevertheless, passing from this geometric picture to a classification of stable solutions presents substantial difficulties. First of all, without an energy bound, one cannot begin by taking a minimal-surface (blowdown) limit of bounded multiplicity. More deeply, the level sets need not be minimal surfaces, and the stability inequality \eqref{eq:stability} acts in the ambient space. Turning it into a useful inequality on an individual level set is therefore a central step if one wants to exploit the analogy with minimal surfaces. A significant difficulty is that neighboring layers interact even when they are disjoint and nearly flat, and controlling these interactions is essential to the geometric form of the stability inequality which will be described below.

\subsection{Main results}

Our first main result establishes the validity of the stable De Giorgi conjecture in dimension three. Set
\begin{equation*}
 g(t):=\tanh\!\left(\frac{t}{\sqrt2}\right)
\end{equation*}
for the canonical monotone one-dimensional solution to Allen--Cahn, or heteroclinic.
\begin{thm}\label{thm:conditional-classification}
Let $u:\mathbb R^3\to(-1,1)$ be a classical solution of the Allen--Cahn equation \eqref{eq:allen-cahn} satisfying the stability inequality \eqref{eq:stability}. Then, there exist $e\in\mathbb S^2$ and $t_0\in\mathbb R$ such that\footnote{Every bounded solution takes values in $[-1,1]$. Moreover, if either endpoint is attained, the solution is actually constant. Thus, the only other bounded stable solutions in $\mathbb R^3$ are $u\equiv\pm1$.}
\[
 u(x)= g(e\cdot x-t_0).
\]
\end{thm}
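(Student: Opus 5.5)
The strategy is to reduce the classification to a Liouville-type rigidity statement for the stability operator. The reduction rests on a quadratic (in $R^2$) area bound for the level sets of $u$, obtained from a geometric form of the stability inequality. Once such a bound is available, the standard route finishes the argument.

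\textbf{Reduction via the Sternberg--Zumbrun inequality.} Test \eqref{eq:stability} with $\xi=|\nabla u|\eta$, using that each $\partial_i u$ solves the linearized equation. This gives
\[
 \int_{\mathbb R^3}\mathcal A^2|\nabla u|^2\eta^2\,dx\le\int_{\mathbb R^3}|\nabla u|^2|\nabla\eta|^2\,dx,
 \qquad \mathcal A^2:=|D^2u|^2-\bigl|\nabla|\nabla u|\bigr|^2\ge0 .
\]
If we knew the energy growth $\int_{B_R}|\nabla u|^2\le CR^2$, the logarithmic cutoff trick would make the right-hand side tend to $0$. This forces $\mathcal A\equiv0$, so the level sets are flat and parallel. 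Then $u$ is one-dimensional, and bounded one-dimensional solutions are $g(e\cdot x-t_0)$ or constants $\pm1$. (This is the Ambrosio--Cabr\'e mechanism; in $\mathbb R^3$ it needs exactly quadratic growth, since the cutoff argument works when the growth is at most $R^2$, i.e.\ $R^{n-1}$ with $n=3$.) The whole proof therefore reduces to the energy estimate
\[
 E(u;B_R)\le CR^2\qquad\text{for all }R\ge1,
\]
for bounded stable solutions in $\mathbb R^3$. No energy assumption is made a priori, so this is where stability must be used.

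\textbf{Energy estimate.} I would follow the minimal-surface analogy: complete stable minimal surfaces in $\mathbb R^3$ are planes without any area hypothesis. The plan has three steps.

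\emph{Step 1: interior estimates.} Gradient and curvature estimates for stable solutions (Wang--Wei, Chodosh--Mantoulidis type) show that, at scales large compared to $1$, the transition region $\{|u|<1-\delta\}$ consists of nearly flat sheets. Near each sheet $u$ is close to the heteroclinic in the normal direction, and the sheets' curvatures are controlled.

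\emph{Step 2: geometric stability on one level set.} On a single level set $\Sigma=\{u=0\}$ (or on each sheet), derive a stability inequality of the form
\[
 \int_\Sigma |A_\Sigma|^2\varphi^2\le \int_\Sigma|\nabla_\Sigma\varphi|^2+\text{(interaction error)}.
\]
The interaction error comes from neighboring sheets, of size about $e^{-\sqrt2 d}$ where $d$ is the interlayer distance, and it must be absorbed.

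\emph{Step 3: area growth from stability.} Apply a Fischer-Colbrie--Schoen / Chodosh--Li style argument (e.g.\ $\mu$-bubbles or conformal change using the first eigenfunction) to the sheet. This yields intrinsic area growth $|\Sigma\cap B_R|\le CR^2$ per sheet. A separate argument bounds the number of sheets in $B_R$: a stacking of many sheets would contradict stability via interaction energy, as in the Toda-system analysis. Combined with the equipartition estimates, this gives $E(u;B_R)\le CR^2$.

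\textbf{Main obstacle.} The hardest step is Step 2 together with the sheet-counting. The interactions between disjoint, nearly flat neighboring layers are not small relative to the curvature terms a priori, and they enter the reduced stability inequality with an unfavorable sign. I would first try to build the test function from $\partial_\nu u$ localized to one sheet, and use the Toda-type lower bound on the interaction (which has a good sign for stable configurations) to absorb the error.
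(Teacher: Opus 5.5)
Your reduction is fine as far as it goes. A bound $\int_{B_R}|\nabla u|^2\le CR^2$, combined with the Sternberg--Zumbrun inequality and a logarithmic cutoff, does force $\mathcal A\equiv0$ (with $\mathcal A^2$ normalized by $|\nabla u|^2$, which your formula omits). But that bound is essentially the whole theorem, and your route to it is circular at Step~1.

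\textbf{Where your plan breaks.} The Wang--Wei sheeting and curvature estimates, and their Chodosh--Mantoulidis analogues, need an extra hypothesis. Examples are that $u$ is $\bar\delta$-close to a heteroclinic at unit scale around every point of the ball (the clean-ball hypothesis used in the paper), or an a priori energy bound. Indeed, in $\mathbb R^3$ a scale-invariant curvature estimate for stable solutions with no such hypothesis would, after letting $R\to\infty$, essentially already be the theorem. Your plan has no mechanism for the region where unit-scale flatness fails, yet Steps~2 and~3 presuppose a global sheet structure. The sheets are not complete surfaces: they terminate where flatness fails. So a Fischer-Colbrie--Schoen or $\mu$-bubble argument has nothing to act on unless that inner boundary is controlled. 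The sheet count is also unsupported. Wang--Wei only gives separation $\sqrt2\log R-C$, which allows of order $R/\log R$ sheets in $B_R$. You give no argument that a Toda-type interaction has a sign that both improves this count and absorbs the interaction errors. Finally, intrinsic per-sheet area bounds do not by themselves give the extrinsic bound $E(u;B_R)\le CR^2$.

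\textbf{The missing idea.} The non-flat region should drive a contradiction rather than be excluded in advance. The paper assumes $u$ is not one-dimensional, so the bad set $\mathcal X=\{x:\int_{B_2(x)}\mathcal A^2|\nabla u|^2>\delta\}$ is nonempty. A Sternberg--Zumbrun log-cutoff argument shows bad points cannot have linear density at all scales. This yields a bounded cluster of $N$ bad centers surrounded by an arbitrarily wide clean annulus, where Wang--Wei applies at scale $\cD$, the distance to the cluster.

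On $\Sigma=\{u=0\}\cap\{L_0<\cD<\Lambda/3\}$ the paper then proves a geometric stability inequality that \emph{retains} the curvature mass $\delta N$ of the cluster on the left-hand side. It tests with $\widetilde\varphi\,(\partial_tu)_+$ on the manifold of normal segments cut at equidistance points. It compares this with the ambient Sternberg--Zumbrun identity via a positive solution of the linearized equation. The Wang--Wei endpoint-angle estimate makes all interaction errors $O(\cD^{-13/4})$. These are summable against the crude cubic area bound $\mathcal H^2(\Sigma\cap\{\cD<t\})\le CNt^3$, so no sheet counting is needed.

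A Jacobi-field area--curvature lemma then gives quadratic intrinsic area growth $\Theta(r)\le Cr^2\Theta(2)$, with $\Theta(2)\le CNL_0^3$. A logarithmic cutoff in the intrinsic distance from $\partial\mathcal B$ bounds the Dirichlet term by $CNL_0^3/\log\Lambda$. This contradicts $\delta N>0$ for $\Lambda$ large. No global energy growth bound for $u$ is ever proved or needed.
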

The proof builds on two main previous ingredients:
\begin{itemize}
\item the regularity theory of Wang and Wei \cite{WW2019} for stable solutions of the Allen--Cahn equation with sufficiently flat transition layers;
\item the proof in \cite{CFFS2026} of the analogue of Theorem~\ref{thm:conditional-classification} for the free-boundary Allen--Cahn problem.
\end{itemize}

Compared to the work in \cite{CFFS2026}, the main missing ingredient for proving Theorem~\ref{thm:conditional-classification} was a sufficiently precise reduction of the semilinear stability inequality to a geometric stability inequality on the zero level set, which is now provided by Proposition~\ref{prop:bad-ball-stability}. A closely related reduced stability inequality appeared in \cite{CFFS2026}: there, distinct local components of the transition region do not interact through the PDE, while the classification results for the Bernoulli problem provide the required local curvature estimates. Instead, in the current semilinear Allen--Cahn equation, disjoint transition layers interact through their exponential tails, and at the sharp logarithmic separation these interactions can be of the same order as the curvature terms in the reduced stability inequality. For this reason, controlling the errors produced when separating the layers requires the full strength of the Wang--Wei theory, including optimal separation estimates, curvature estimates, and precise approximation by multi-layer Ansätze.

As discussed before, Theorem \ref{thm:conditional-classification} implies De Giorgi's original conjecture in one dimension higher, giving the following:

\begin{cor}[De Giorgi's conjecture in dimension four]
\label{cor:de-giorgi-four}
Let $u:\mathbb R^4\to(-1,1)$ be a classical solution of \eqref{eq:allen-cahn} satisfying $\partial_{x_4}u>0$. Then, there exist $e\in\mathbb S^3$, with $e_4>0$, and $t_0\in\mathbb R$ such that
\[
 u(x)=g(e\cdot x-t_0).
\]
\end{cor}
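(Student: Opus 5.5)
The plan is to derive the corollary from Theorem~\ref{thm:conditional-classification} through the standard Jerison--Monneau / Savin scheme via the limiting profiles at the two ends of the monotone direction. Since $\partial_{x_4}u>0$ and $|u|<1$, the limits
\[
 \overline u(x'):=\lim_{x_4\to+\infty}u(x',x_4),\qquad \underline u(x'):=\lim_{x_4\to-\infty}u(x',x_4),\qquad x'\in\mathbb R^3,
\]
exist pointwise. By interior elliptic estimates the convergence is in $C^2_{\rm loc}(\mathbb R^3)$, so $\overline u$ and $\underline u$ are bounded solutions of \eqref{eq:allen-cahn} in $\mathbb R^3$. Each is stable in $\mathbb R^3$. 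To see this, recall that $\varphi:=\partial_{x_4}u>0$ solves the linearized equation, so $u$ is stable in $\mathbb R^4$. Given $\xi\in C^1_c(\mathbb R^3)$, test \eqref{eq:stability} in $\mathbb R^4$ with $\xi(x')\eta_R(x_4-s)$, where $\eta_R$ is a cutoff of length $R$ with $|\eta_R'|\lesssim 1/R$. Dividing by $R$, sending $s\to\pm\infty$ and then $R\to\infty$ gives $Q_{\overline u}(\xi)\ge0$ and $Q_{\underline u}(\xi)\ge0$.

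Next I apply Theorem~\ref{thm:conditional-classification} together with its footnote. Each limit is either $\equiv\pm1$ or of the form $g(e\cdot x'-t_0)$ with $e\in\mathbb S^2$. The key point is that both limits are energy minimizers in $\mathbb R^3$: the constants $\pm1$ trivially, and the one-dimensional profiles by the classical fact that monotone 1D heteroclinic layers minimize. From this I follow the Jerison--Monneau/Savin variational argument. Using $\underline u\le u\le\overline u$ as barriers, with both minimizing, one shows that $u$ is a local energy minimizer in $\mathbb R^4$: it is the monotone ``sliding'' between two ordered minimizers. The comparison is done in cylinders $B_R'\times(-T,T)$, where the energy of $u$ is compared with that of its minimizer with the same boundary data, and the monotonicity of $u$ in $x_4$ is used to rule out non-minimizing competitors.

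Once $u$ is shown to be a minimizer in $\mathbb R^4$, I would invoke Savin's classification of minimizers in dimensions $\le7$~\cite{Savin2009}, which yields $u(x)=g(e\cdot x-t_0)$. The sign condition $e_4>0$ then follows directly from $\partial_{x_4}u=g'(e\cdot x-t_0)\,e_4>0$.

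The main obstacle is the minimality step, specifically the case where the two limits are different one-dimensional profiles or the case mixing a constant with a profile. There the energy in cylinders is not controlled trivially. I would first try the argument as in \cite{JM2004}: prove the energy bound $E(u;B_R)\lesssim R^{3}$ by comparing with the limits and exploiting monotonicity (the $x_4$-integral of the energy density is bounded by the energies of the limits plus the $W$-contribution), and then apply Savin's result for solutions with limits $\pm1$ or its energy-based version. If the limits are both $\pm1$, Savin's theorem applies directly.
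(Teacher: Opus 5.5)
Your proposal follows the paper's proof. It goes through stability of the limits $u^\pm$, then Theorem~\ref{thm:conditional-classification} (with its footnote) to identify them as constants or heteroclinics and hence minimizers, then minimality of $u$ via the Alberti--Ambrosio--Cabr\'e sliding argument \cite[Theorem~4.4]{AAC2001} combined with the truncation $v\mapsto\max\{u^-,\min\{v,u^+\}\}$ (justified by the minimality of $u^\pm$), and finally Savin's classification of minimizers. The ``main obstacle'' you flag at the end is not one: sliding plus truncation uses only that $u^-\le u\le u^+$ with $u^\pm$ minimizers, so distinct or mixed limits are handled exactly like $u^\pm=\pm1$, and no energy bound or fallback is needed.
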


\begin{proof}
Since the limiting profiles $ u^\pm(x'):=\lim_{x_4\to\pm\infty}u(x',x_4) $ ($x' \in \mathbb R^3$) are stable solutions of \eqref{eq:allen-cahn} in $\mathbb R^3$, Theorem~\ref{thm:conditional-classification} implies that each function is either a constant $\pm1$ or a one-dimensional profile, and therefore minimizes the energy against compactly supported perturbations. Hence, the local minimality theorem of Alberti, Ambrosio, and Cabr\'e~\cite[Theorem~4.4]{AAC2001} implies minimality of $u$ among competitors lying between $u^-$ and $u^+$, and since the limiting profiles $u^\pm$ are themselves minimizers, replacing an arbitrary competitor $v$ with $\max\{u^-,\min\{v,u^+\}\}$ cannot increase its energy. Thus $u$ is a local minimizer, and Savin's classification~\cite[Theorem~2.3]{Savin2009} implies that it is one-dimensional.
\end{proof}

\subsection{Related works}
\label{subsec:related-works-proposal}

Geometric forms of the stability inequality go back to Sternberg and Zumbrun~\cite{SZ98} and underlie the symmetry results of Farina, Sciunzi and Valdinoci~\cite{FSV2008}, providing direct control of level-set curvature through stability. The connection between stability and geometry of interfaces also appears in the minimal surface limit. For sequences with bounded energy, Hutchinson and Tonegawa~\cite{HT2000} established convergence to stationary integral varifolds (i.e. generalized minimal surfaces). Tonegawa~\cite{Ton2005} derived a geometric stability inequality for such limits, and Tonegawa and Wickramasekera~\cite{TW2012} developed their regularity theory using the stronger understanding of the area problem (in particular, a regularity theory for stable varifolds by Wickramasekera~\cite{wic}).

A complementary problem is to obtain quantitative estimates for the transition layers themselves. Early results include the uniform first-order regularity of minimizing layers under a Lipschitz hypothesis, due to Caffarelli and C\'ordoba~\cite{CC2006}. For configurations with several layers, their mutual interaction creates an additional difficulty; its identification and description through a Toda system already appeared in the constructions of del Pino, Kowalczyk and Wei~\cite{DPKW2008}. A major advance in this direction is the work of Wang and Wei~\cite{WWFiniteEnds2019,WW2019}, who obtained second-order estimates for stable transition layers while accounting for these delicate interactions.

The significance of Wang--Wei's analysis extends well beyond the original questions addressed in those works: building on Wang and Wei’s work~\cite{WWFiniteEnds2019}, Chodosh and Mantoulidis~\cite{CM2020} established curvature and sheet-separation estimates on three-manifolds, with applications to multiplicity and Morse index in the minimal-surface limit and a well-known conjecture of Yau. Also, Wang--Wei's estimates play a central role in global classification results under energy-growth assumptions: under cubic energy growth in $\mathbb R^4$, Florit-Simon and Serra~\cite{FS2025} proved one-dimensionality of stable solutions, and Florit-Simon~\cite{FS2026} extended this classification to finite-index solutions.

Alongside these rigidity results, another line of work concerns the geometry of finite-index solutions and their ends. On the constructive side, del Pino, Kowalczyk and Wei~\cite{DPKW2013} obtained solutions near dilations of nondegenerate embedded minimal surfaces of finite total curvature in $\mathbb R^3$, with the same Morse index. On the regularity side, Wang and Wei~\cite{WWFiniteEnds2019} proved that finite Morse index in $\mathbb R^2$ implies finitely many ends and linear energy growth. In $\mathbb R^3$, Florit-Simon~\cite{FS2026} described the ends of finite-index solutions with quadratic energy growth. Under the same assumption, Liu, Wang, Wei and Wu~\cite{LWWW2026} bounded the number of ends in terms of the index and proved axial symmetry for index-one solutions.

\subsection{History of this work and use of artificial intelligence}
\label{sec:ai-declaration}

This work builds on several years of research on the stable Allen--Cahn conjecture. The geometric strategy was developed in the free boundary setting in~\cite{CFFS2026}, where distinct transition layers do not interact. Using these ideas and results, we subsequently worked on the classical Allen--Cahn equation, for which controlling the interactions between neighboring layers was the principal remaining difficulty.

While studying the model case of the classical Allen--Cahn equation in which the bad set is contained in a single ball (see Definition~\ref{defn:bad-set}), we explained our approach to ChatGPT~6 Pro, including our expectation that geometric errors should improve as curvature decreases, as in the free boundary problem. Following these discussions, on 5~September 2026 ChatGPT proposed a preliminary version of Proposition~\ref{prop:bad-ball-stability} for the general case, with interaction errors summable using the crude cubic area bound. This estimate builds on the quantitative analysis of interacting layers in Wang--Wei~\cite{WW2019} and was the only mathematical input from AI that we later used.

At that point, it was already clear to us how to conclude the proof. We then developed a more efficient proof of the estimate and completed the classification using the geometric strategy of~\cite{CFFS2026}. In preparing the manuscript, we also used AI to help write and revise the text.

\subsection{An independent work}
While finishing writing this paper, we learned of an independent proof of the stable De Giorgi conjecture in $\mathbb R^3$ by Liu, Luo, Wang, J.~Wei, Y.~Wei, and Wu~\cite{LiuEtAl2026}.

\subsection{Organization of the paper}
The paper is organized as follows. In Section~\ref{sec:preliminaries} we recall geometric stability and the Wang--Wei estimates. Section~\ref{sec:bounded-cluster} constructs the bad set and proves the presence of arbitrarily wide clean annuli. Section~\ref{sec:bad-ball-stability} derives the geometric stability inequality, which yields the classification of stable solutions in Section~\ref{sec:classification}. Finally, Appendix~\ref{sec:intrinsic-area-proof} proves the auxiliary area--curvature estimate used in the classification argument.

\subsection{Acknowledgements}

H.C. was supported by the Swiss National Science Foundation (SNSF) under grant PZ00P2\_202012. X.F. was supported by the SNSF under grant PZ00P2\_208930, by the Swiss State Secretariat for Education, Research and Innovation (SERI) under contract MB22.00034, and by the Spanish AEI project PID2024-156429NB-I0. E.F. and J.S. were supported by the European Research Council (ERC) under Grant Agreement No.~948029.

\section{Preliminaries}
\label{sec:preliminaries}

We begin by presenting some preliminary results and consequences that will be useful throughout the proof.

\subsection{Notation} 
In this manuscript, unless stated otherwise, $u$ always  denotes a bounded entire stable solution of \eqref{eq:allen-cahn} in $\mathbb R^3$. We denote
\[
\sigma_0:=\int_{\mathbb R}g'(t)^2\,dt=\frac{2\sqrt2}3,\qquad \Sigma_\lambda=\{u=\lambda\}.
\]
We also define
\begin{equation*}
 \mathcal A(u)^2=
 \frac{|D^2u|^2-\bigl|\nabla|\nabla u|\bigr|^2}{|\nabla u|^2}
 \quad\text{on }\{\nabla u\ne0\},\qquad
 \mathcal A=0\quad\text{on }\{\nabla u=0\}.
\end{equation*}
On a regular level set, with normal $\nu=\nabla u/|\nabla u|$ and $\mathrm{II}(X,Y)=-\langle D_X\nu,Y\rangle$, one has
\begin{equation*}
 \mathcal A^2=|\mathrm{II}_{\Sigma_\lambda}|^2
       +|\nabla_{\Sigma_\lambda}\log|\nabla u||^2.
\end{equation*}
Moreover,
\begin{equation*}
 \mathcal A^2|\nabla u|^2
       =|D^2u|^2-\bigl|\nabla|\nabla u|\bigr|^2
       \qquad\text{a.e.}
\end{equation*}
We denote by $H=\operatorname{tr}\mathrm{II}$ the mean curvature. For real matrices $A$ and $B$ of the same size, $A:B:=\sum_{i,j}A_{ij}B_{ij}=\operatorname{tr}(A^{\mathsf T}B)$ denotes their Frobenius inner product.

\subsection{Some known tools}
There is a standard form of the stability inequality in terms of $\mathcal A$:
\begin{lem}[Sternberg--Zumbrun inequality \cite{SZ98}]
\label{lem:sternberg-zumbrun}
Let $U\subset\mathbb R^3$ be open and let $u$ be a classical stable solution of \eqref{eq:allen-cahn} in $U$. For every $\varphi\in C_c^{0,1}(U)$,
\begin{equation*}
 \int_U\mathcal A^2|\nabla u|^2\varphi^2\leq\int_U |\nabla u|^2|\nabla\varphi|^2.
\end{equation*}
\end{lem}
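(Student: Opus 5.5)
The plan is the classical one: test the stability inequality \eqref{eq:stability} with $\xi=|\nabla u|\varphi$, where $\varphi\in C^{0,1}_c(U)$. (Strictly, \eqref{eq:stability} is stated on $\mathbb R^n$, but the local version on $U$ is the intended hypothesis.) Then combine the result with the linearized equation satisfied by the derivatives of $u$. First, a density remark. Stability for $C^1_c$ test functions extends to Lipschitz, compactly supported ones by mollification. $|\nabla u|$ is Lipschitz, since $u$ is smooth by elliptic regularity, so $\xi=|\nabla u|\varphi\in C^{0,1}_c(U)$ is admissible. Moreover $\nabla|\nabla u|$ exists a.e. and vanishes a.e. on $\{\nabla u=0\}$.

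Expanding $Q_u(|\nabla u|\varphi)\ge0$ gives
\[
0\le\int |\nabla u|^2|\nabla\varphi|^2+\int\varphi^2\bigl|\nabla|\nabla u|\bigr|^2+\int 2\varphi|\nabla u|\nabla|\nabla u|\cdot\nabla\varphi+\int W''(u)|\nabla u|^2\varphi^2 .
\]
Next, differentiate the equation: each $u_i=\partial_iu$ solves $\Delta u_i=W''(u)u_i$. Multiply by $u_i\varphi^2$, sum over $i$ and integrate by parts. This yields
\[
\int W''(u)|\nabla u|^2\varphi^2=-\int|D^2u|^2\varphi^2-\int 2\varphi\,\nabla\varphi\cdot D^2u\,\nabla u .
\]
The identity $D^2u\,\nabla u=\tfrac12\nabla|\nabla u|^2=|\nabla u|\nabla|\nabla u|$ holds a.e. Hence the cross term equals $-\int2\varphi|\nabla u|\nabla|\nabla u|\cdot\nabla\varphi$, which cancels exactly the mixed term in the expansion above. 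Substituting, we obtain
\[
\int\bigl(|D^2u|^2-\bigl|\nabla|\nabla u|\bigr|^2\bigr)\varphi^2\le\int|\nabla u|^2|\nabla\varphi|^2 .
\]
By the a.e. identity $\mathcal A^2|\nabla u|^2=|D^2u|^2-|\nabla|\nabla u||^2$ recorded in the notation section, this is precisely the claimed inequality.

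The only delicate point is the set $\{\nabla u=0\}$, where $|\nabla u|$ fails to be differentiable. I would handle it either by the a.e. facts above, or more safely by testing with $\xi=\sqrt{|\nabla u|^2+\varepsilon^2}\,\varphi$ and letting $\varepsilon\to0$.

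In the regularized version, $\nabla\sqrt{|\nabla u|^2+\varepsilon^2}=D^2u\nabla u/\sqrt{|\nabla u|^2+\varepsilon^2}$. The mixed terms again cancel exactly, and the zeroth-order term now reads $W''(u)(|\nabla u|^2+\varepsilon^2)\varphi^2$. Its extra $\varepsilon^2$ part tends to $0$ because $W''(u)$ is bounded on $\operatorname{supp}\varphi$. The gradient term is bounded by $|D^2u\nabla u|^2/(|\nabla u|^2+\varepsilon^2)$, which converges pointwise to $|\nabla|\nabla u||^2$ on $\{\nabla u\neq0\}$ and to $0$ elsewhere. It is dominated by $|D^2u|^2$, so dominated convergence applies, and the boundary term $\int(|\nabla u|^2+\varepsilon^2)|\nabla\varphi|^2$ converges as well. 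Passing to the limit gives the inequality with $\mathcal A=0$ on $\{\nabla u=0\}$, consistent with the definition.
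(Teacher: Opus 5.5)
Your argument is correct and is the classical Sternberg--Zumbrun proof that the paper invokes by citation: you test stability with $\xi=|\nabla u|\varphi$ and use the differentiated equation $\Delta u_i=W''(u)u_i$, which is the integrated form of the pointwise identity \eqref{eq:normal-sz-pointwise} the paper itself uses later. Your $\varepsilon$-regularization with $\sqrt{|\nabla u|^2+\varepsilon^2}\,\varphi$ handles the critical set cleanly and yields the inequality directly, consistent with the convention $\mathcal A=0$ on $\{\nabla u=0\}$.
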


We also recall the standard regularity estimates for classical solutions to the Allen--Cahn equation:
\begin{lem}[Uniform regularity and decay]
\label{lem:uniform-regularity-phase-decay}
Every bounded entire classical solution of \eqref{eq:allen-cahn} satisfies
\begin{equation}\label{eq:uniform-regularity-bounds}
 |u|\leq1,\quad |\nabla u|\leq1/\sqrt{2},\quad
 |D^m u|\leq C_m\quad\text{for}\quad m\geq2,
\end{equation}
where $C_m$ depends only on $m$. There are universal constants $c,C>0$ such that
\begin{equation*}
 1-|u(x)|+|\nabla u(x)|+|D^2u(x)|\leq Ce^{-c\operatorname{dist}(x,\Sigma_0)}
 \quad\text{for all}\quad x\in\mathbb R^3.
\end{equation*}
In particular, a bounded entire solution without zeros is identically $1$ or $-1$.
\end{lem}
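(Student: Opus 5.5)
The three parts of Lemma~\ref{lem:uniform-regularity-phase-decay} are proved in order: first the a priori bounds \eqref{eq:uniform-regularity-bounds}, then the exponential decay away from $\Sigma_0$, and finally the Liouville-type statement, which follows from the decay estimate.

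\emph{Step 1: $|u|\le1$.} Suppose $\sup u>1$. If the supremum is attained at a point, then at that point $\Delta u\le0$ while $W'(u)=u^3-u>0$, which contradicts \eqref{eq:allen-cahn}. If it is not attained, I take points $x_k$ with $u(x_k)\to\sup u$ and translate: the functions $u(\cdot+x_k)$ are bounded, and interior $W^{2,p}$ and Schauder estimates give uniform $C^{2,\alpha}$ bounds. A subsequence converges locally in $C^2$ to a solution whose supremum is attained at the origin, and the same contradiction applies. The lower bound is symmetric. Once $|u|\le1$ is known, the same compactness together with Schauder estimates, bootstrapped by differentiating the equation, gives $|D^mu|\le C_m$ with $C_m$ universal.

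\emph{Step 2: the gradient bound.} This is Modica's inequality $\tfrac12|\nabla u|^2\le W(u)$, which gives $|\nabla u|\le\sqrt{2W(u)}\le 1/\sqrt2$ because $\max_{[-1,1]}W=W(0)=\tfrac14$. I would prove it through the $P$-function $P=\tfrac12|\nabla u|^2-W(u)$. This function satisfies an elliptic differential inequality of the form $|\nabla u|^2\Delta P\ge \tfrac12|\nabla P|^2-C|\nabla u|^2\ldots$, or more precisely $\Delta P\ge$ terms that vanish where $\nabla P=0$ and $P>0$. One then argues by contradiction at a point of near-maximum of $P$, again using translation compactness to force the maximum to be attained. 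This step is the main obstacle. The Bochner computation and the treatment of points where $\nabla u=0$ are delicate, and the maximum principle has to be applied at a limit point. Since the result is classical (Modica), I would follow his argument.

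\emph{Step 3: exponential decay.} Let $d=\operatorname{dist}(x,\Sigma_0)$, and assume $u>0$ on $B_d(x)$. First I show that $1-u\le\delta$ on $B_{d/2}(x)$ once $d\ge R_0$. If this failed, compactness would produce an entire solution in $(0,1]$ that is not identically $1$. But a positive bounded solution with $\inf u>0$ must be $\equiv1$: at an infimum point (attained by translation) one has $W'(u)\ge0$, forcing $\inf u\in\{1\}$. To rule out $\inf u=0$ on the whole space, I compare with a radial subsolution, namely the first Dirichlet eigenfunction on a large ball scaled by a small constant. Its sliding keeps $u$ uniformly away from $0$ on large balls. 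Next, on $\{u\ge1-\delta\}$ the function $w=1-u$ satisfies $\Delta w=W'(u)\ge c\,w$. Comparing with the barrier $\epsilon\cosh$-type supersolution $\sum_i\cosh(\sqrt c(y_i-x_i))/\cosh(\sqrt c\,d/4)$ on a cube of side $\sim d$ gives $w(x)\le Ce^{-c'd}$. The derivative bounds then follow from interior Schauder estimates applied to $w$ on $B_1(x)$.

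\emph{Step 4.} If $u$ has no zeros, then $d=\infty$ for every $x$, so $1-|u|\equiv0$. Hence $u\equiv\pm1$.
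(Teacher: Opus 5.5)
Your proposal is correct and is essentially the standard argument the paper invokes by citation: the maximum principle plus interior estimates and Modica's inequality give \eqref{eq:uniform-regularity-bounds}, and closeness to $\pm1$ far from $\Sigma_0$ followed by an exponential barrier for $1-|u|$ gives the decay (the paper later relies on exactly this mechanism via $|u|(1+|u|)\geq3/2$). There are two small slips to fix. First, $\Delta(1-u)=-W'(u)=u(1+u)(1-u)$, not $W'(u)$. Second, the sliding lower bound $u\geq\eta>0$ must be proved for the approximating solutions on their large positivity balls before passing to the limit. Otherwise the compactness limit could be $u_\infty\equiv0$, and it is not yet known to take values in $(0,1]$.
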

\begin{proof}
The uniform bounds follow from \cite[Lemma~2.1]{FS2025} and interior elliptic estimates. The exponential decay follows from the proof of \cite[Lemma~4.2]{KLP2012}, which applies equally in $\mathbb R^3$.
\end{proof}

\subsection{Estimates from Wang--Wei}
\label{subsec:wang-wei-estimates}
Let us present some consequences of the results by Wang--Wei \cite{WW2019} (with the appropriate normalization due to our choice of potential in \eqref{eq:allen-cahn}, which differs by a constant factor from \cite{WW2019}). In the following, $B_r'$ denotes the ball of radius $r> 0$ in ${\mathbb R}^2$.

\begin{thm}[Wang--Wei sheeting and profile estimates]
\label{lem:wang-wei-profiles}
There are universal constants $c,c_\circ,C,\bar\delta>0$, and $R_*\geq4$, such that the following holds.

Let $R\geq R_*$ and let $u:B_{2R}\to(-1,1)$ be a stable solution of \eqref{eq:allen-cahn}. Suppose that for every $x\in B_{3R/2}$ there are $e_x\in\mathbb S^2$ and $a_x\in\mathbb R$ such that
\begin{equation}\label{eq:ww-approximation-hypothesis}
 \sup_{y\in B_2(x)}
 |u(y)-g(e_x\cdot y+a_x)|\leq\bar\delta.
\end{equation}
Then, $|\nabla u|\geq c_\circ$ on $\Sigma_0\cap B_R$, so $\Sigma_0$ is smooth there.

For each $x_0\in\Sigma_0\cap B_R$, there are orthonormal coordinates $(x',s)$ centered at $x_0$ and a consecutive family of smooth ordered zero graphs $f_1<\cdots<f_Q$ on $B'_{8cR}$, contained in $B_{R/2}(x_0)$, with the following properties: let
\begin{equation*}
 \Gamma_j=\{(x',f_j(x')):x'\in B'_{4cR}\}.
\end{equation*}
Then:
\begin{enumerate}
\renewcommand{\labelenumi}{(\roman{enumi})}
\renewcommand{\theenumi}{\roman{enumi}}

\item\label{item:ww-sheet-geometry} \emph{Graph representation and geometric estimates.}
\begin{equation}\label{eq:ww-graph-bounds}
 \begin{split}
 &\Sigma_0\cap B_{2cR}(x_0)
 =\bigcup_{j=1}^Q\bigl(\Gamma_j\cap B_{2cR}(x_0)\bigr),\qquad  \|Df_j\|_\infty
  +R\|\mathrm{II}_{\Gamma_j}\|_{C^{0,1}}
  +R^2\|H_{\Gamma_j}\|_{C^{0,1/2}}\leq C.
 \end{split}
\end{equation}

\item\label{item:ww-sheet-separation} \emph{Sheet separation.}
\begin{equation}\label{eq:ww-sheet-separation}
 \operatorname{dist}(y,\Gamma_k)\geq\sqrt2\log R-C
 \quad\text{for }y\in\Gamma_j\cap B_{2cR}(x_0),\quad k\ne j.
\end{equation}

\item\label{item:ww-profile-approximation} \emph{Profile approximation.} Let $d_j$ be the signed distance to the extended $j$-th graph, positive above it. Each $d_j$ is smooth in $B_{cR}(x_0)$, and there is $\sigma\in\{-1,1\}$ such that the unshifted profile sum satisfies
\begin{equation}\label{eq:ww-profile-inputs-proof}
 \begin{split}
 &\|u-G_0\|_{C^2(B_{cR}(x_0))}\leq CR^{-2},\qquad \sigma G_0(x)=
  \sum_{j=1}^Q(-1)^{j-1}g(d_j(x))
  -\frac{1+(-1)^Q}{2}.
 \end{split}
\end{equation}
\end{enumerate}
\end{thm}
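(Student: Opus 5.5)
This theorem is a repackaging of the main results of Wang--Wei \cite{WW2019}, so the plan is to verify that its hypotheses fit their framework. After that, each conclusion is read off from their theorems, with the rescaling $\varepsilon\leftrightarrow 1/R$ and the normalization of the potential accounted for.

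\textbf{Step 1: normalization.} Wang--Wei work with $W(t)=(1-t^2)^2/4$ or a multiple of it. A change of variables $x\mapsto\lambda x$ converts between conventions and only changes universal constants. This is why the separation constant comes out as $\sqrt2\log R$, matching the decay rate $g'(t)\sim e^{-\sqrt2|t|}$.

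\textbf{Step 2: nondegeneracy of the gradient.} Hypothesis \eqref{eq:ww-approximation-hypothesis} says that on every ball $B_2(x)$, $x\in B_{3R/2}$, $u$ is $\bar\delta$-close to a one-dimensional profile. For $y\in\Sigma_0$ we have $|g(e_x\cdot y+a_x)|\le\bar\delta$, so $|e_x\cdot y+a_x|$ is small. Interior $C^2$ estimates (Lemma~\ref{lem:uniform-regularity-phase-decay}) upgrade the closeness to $C^1$ closeness, with an error that tends to zero as $\bar\delta\to0$. Hence $|\nabla u(y)|\ge g'(0)/2=:c_\circ$ for $\bar\delta$ small, and the implicit function theorem shows that $\Sigma_0\cap B_R$ is smooth.

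\textbf{Step 3: the sheeting structure.} This is exactly the setting of \cite{WW2019}: a stable solution in a large ball whose level sets are locally flat at unit scale. After rescaling by $\varepsilon=1/R$, their main theorem supplies the following conclusions.
\begin{itemize}
\item The curvature estimate $|\mathrm{II}|\le C/R$ for $\Sigma_0$ in $B_{cR}$. This is the uniform second fundamental form bound, obtained from stability via the Sternberg--Zumbrun inequality (Lemma~\ref{lem:sternberg-zumbrun}) and their improvement of flatness. It yields the graph decomposition over a common plane in $B_{2cR}(x_0)$, with $\|Df_j\|\le C$ after shrinking $c$.
\item The optimal Toda-system separation estimate, which gives \eqref{eq:ww-sheet-separation}.
\item The approximation of $u$ by the alternating sum of heteroclinics along the sheets. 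Its error in $C^2$ is controlled by the interaction terms $e^{-\sqrt2 D}\lesssim R^{-2}$ and the curvature terms $|H|+|\mathrm{II}|^2\lesssim R^{-2}$, which gives \eqref{eq:ww-profile-inputs-proof}.
\end{itemize}
The mean-curvature Hölder bound $R^2\|H\|_{C^{0,1/2}}$ and the Lipschitz bound on $\mathrm{II}$ come from their $C^{2,\theta}$ estimate for the graphs, $\|f_j\|_{C^{2,1/2}}$. The Toda system gives $H_j=O(e^{-\sqrt2 D})+O(R^{-2})=O(R^{-2})$, and Schauder theory applied to the minimal-surface-type equation for $f_j$ upgrades this to the stated Hölder norms.

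\textbf{Step 4: smoothness of the distance functions.} The functions $d_j$ are smooth in $B_{cR}(x_0)$ because the normal injectivity radius of a surface with $|\mathrm{II}|\le C/R$ is at least of order $R/C$; shrinking $c$ places $B_{cR}(x_0)$ within it. The sign $\sigma$ and the constant $-(1+(-1)^Q)/2$ are bookkeeping. Which phase lies below $\Gamma_1$ fixes $\sigma$, and the constant makes the alternating sum tend to $\pm1$ far from the sheets.

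\textbf{Main obstacle.} The real content is the optimal separation $\sqrt2\log R-C$, together with the $R^{-2}$ profile error, uniformly in the number of sheets $Q$. Both rest on Wang--Wei's delicate reduction to the Toda system, so I would cite it rather than reprove it. The point requiring most care in the citation is checking that their hypothesis is met. Their hypothesis is stated in terms of a bound on the excess of the level sets, which is not directly our hypothesis \eqref{eq:ww-approximation-hypothesis}. For small $\bar\delta$, this closeness implies flatness of $\Sigma_0$ at unit scale with $|\nabla u|$ bounded below, and that is the hypothesis they actually use.
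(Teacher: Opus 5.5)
Your overall plan of verifying the hypotheses and citing \cite{WW2019} is also the paper's plan. Your Step~2, which interpolates the $C^0$ closeness against uniform $C^2$ bounds to get $|\nabla u|\geq c_\circ$ on $\Sigma_0$, is correct and more elementary than the paper's route. The genuine gap is the first bullet of Step~3, i.e.\ the hypothesis check you flag at the end. There you claim that unit-scale flatness with $|\nabla u|$ bounded below is ``the hypothesis they actually use''; this is where the argument breaks.

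The bound $\mathcal A(u)\leq CR^{-1}$ on $B_{R/4}(x_0)\cap\{|u|\leq 9/10\}$ is the \emph{sheeting assumption} required by Wang--Wei's second-order theory (Toda system, optimal separation, $R^{-2}$ profile error). That theory takes it as input; it does not produce it from unit-scale flatness. Nor do the Sternberg--Zumbrun inequality and improvement of flatness give it. Without energy bounds, Sternberg--Zumbrun only yields $\int_{B_r}\mathcal A^2|\nabla u|^2\leq Cr$, with no smallness.

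Moreover, your argument nowhere uses that the ambient dimension is three, and it must. Letting $R\to\infty$, the dimension-free statement would make every stable solution that is everywhere $\bar\delta$-close to one-dimensional profiles at unit scale one-dimensional. This fails in $\mathbb R^8$ for the stable solutions of \cite{PW2013}, whose zero sets are close to large dilations of a smooth minimal hypersurface asymptotic to the Simons cone.

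The missing ingredient is the compactness argument of \cite[Corollary~1.3 and its proof]{WW2019}. Its limiting objects are complete stable minimal surfaces in $\mathbb R^3$, which are planes by the stable Bernstein theorem \cite{DP1979,FS1980,Pog1981}. This is exactly where $n=3$ enters; only after this step does the rest of \cite{WW2019} apply.

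There are also two smaller omissions. For (iii), the $C^2$ estimate available in \cite{WW2019} is $\|u-G_h\|_{C^2(B_{cR}(x_0))}+\sup_j\|h_j\|_{C^2}\leq CR^{-2}$ for a \emph{shifted and truncated} ansatz $G_h$, not for $G_0$. You still need to show that removing the shifts and truncations costs only $CR^{-2}$, uniformly in $Q$. This follows because \eqref{eq:ww-sheet-separation} allows only boundedly many sheets within distance $C\log R$ of any point, while the remaining sheets contribute summable exponential tails.

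Second, Schauder theory goes from H\"older control of $H_j$ to $C^{2,\alpha}$ control of $f_j$, not conversely. A $C^{2,1/2}$ bound of size $R^{-1}$ on $f_j$ gives neither $[H_j]_{C^{0,1/2}}\lesssim R^{-2}$ nor a Lipschitz bound on $\mathrm{II}$. Both must be taken directly from the corresponding estimates in \cite{WW2019}, as the paper does.
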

\begin{proof}
By \cite[Corollary~1.3, p.~4, and its proof, p.~71]{WW2019}, using the stable Bernstein theorem for minimal surfaces in dimension three,
\begin{equation}\label{eq:ww-enhanced-curvature}
 |\nabla u|\geq c_\circ,\qquad \mathcal A(u)\leq CR^{-1}
 \quad\text{in }B_{R/4}(x_0)\cap\{|u|\leq9/10\}.
\end{equation}
This means that $u$ satisfies the ``sheeting assumptions'' required for the Wang--Wei estimates. Then \cite[Lemma~2.2, (3.3) and Lemma~3.1, (11.1) and the display below, and Proposition~10.1]{WW2019} give \eqref{eq:ww-graph-bounds}, and \cite[Proposition~10.1]{WW2019} corresponds to \eqref{eq:ww-sheet-separation}.

For \eqref{eq:ww-profile-inputs-proof}, let $G_h$ denote the shifted, truncated profile sum constructed in \cite[(4.3) and Proposition~4.1]{WW2019}, expressed in our original coordinates. (Essentially, this is the profile in \eqref{eq:ww-profile-inputs-proof} but with shifts $h_j(x)$ in the normal direction, and truncated far from the layers.) By \cite[(11.1) and the following display, p.~67]{WW2019} and \cite[Lemma~4.6]{WW2019},
\begin{equation}\label{eq:ww-shifted-profile-inputs}
 \|u-G_h\|_{C^2(B_{cR}(x_0))}
 +\sup_j\|h_j\|_{C^2(\Gamma_j)} \leq CR^{-2}.
\end{equation}
Next, we note that the errors from removing the truncations and the shifts are summable, uniformly in the number of sheets. Indeed, \eqref{eq:ww-sheet-separation} bounds uniformly the number of sheets within distance $C\log R$ of any fixed point, while the contributions of the remaining sheets and their first two derivatives are summable by exponential decay. Thus the cutoff construction in \cite[Section~4.1]{WW2019} and \eqref{eq:ww-shifted-profile-inputs} give
\[
 \|G_h-G_0\|_{C^2(B_{cR}(x_0))}\leq CR^{-2},
\]
with $C$ independent of $Q$. Combining these estimates proves \eqref{eq:ww-profile-inputs-proof}.
\end{proof}
A ball satisfying \eqref{eq:ww-approximation-hypothesis} is considered ``good'' (cf. \emph{clean balls} below), since $\{u=0\}$ is smooth with estimates there. The condition \eqref{eq:ww-approximation-hypothesis} is readily connected to the stability inequality; here is the form we will actually use:

\begin{lem}
\label{lem:small-curvature-heteroclinic}
For every $\varepsilon>0$ there is $\delta>0$ such that, if a bounded entire stable solution $u$ of \eqref{eq:allen-cahn} satisfies
\[
 \int_{B_2}\mathcal A^2|\nabla u|^2\leq\delta,
\]
then, for some $e\in\mathbb S^2$ and $t_0\in\mathbb R$,
\[
 \|u(x)-g(e\cdot x-t_0)\|_{C^2(B_2)}<\varepsilon.
\]
\end{lem}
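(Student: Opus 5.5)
We argue by contradiction and compactness. Suppose that for some $\varepsilon>0$ there are bounded entire stable solutions $u_k$ with $\int_{B_2}\mathcal A(u_k)^2|\nabla u_k|^2\le 1/k$, but $\|u_k-g(e\cdot x-t_0)\|_{C^2(B_2)}\ge\varepsilon$ for every $e\in\mathbb S^2$ and $t_0\in\mathbb R$. By Lemma~\ref{lem:uniform-regularity-phase-decay}, the $u_k$ are uniformly bounded in $C^m(\mathbb R^3)$ for every $m$. By Arzel\`a--Ascoli, a subsequence converges in $C^2_{\mathrm{loc}}$ (indeed in $C^3_{\mathrm{loc}}$) to a bounded entire solution $u_\infty$ of \eqref{eq:allen-cahn} with $|u_\infty|\le1$. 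Stability passes to the limit: for fixed $\xi\in C^1_c$ we have $W''(u_k)\to W''(u_\infty)$ uniformly on $\operatorname{supp}\xi$, so $Q_{u_\infty}(\xi)\ge0$. It therefore suffices to show that $u_\infty$ restricted to $B_2$ coincides with some $g(e\cdot x-t_0)$, or with $\pm1$. In the latter case, $\pm1$ is a $C^2(B_2)$-limit of $g(e\cdot x-t_0)$ as $t_0\to\mp\infty$. Either way, $u_k$ would eventually be $\varepsilon$-close to a heteroclinic, a contradiction.

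\textbf{Passing the curvature bound to the limit.} This is the step I expect to need the most care, because $\mathcal A$ is singular where $\nabla u=0$. On the open set $U=B_2\cap\{\nabla u_\infty\neq0\}$ we have $\nabla u_k\ne0$ for $k$ large, locally uniformly. Hence
\[
\mathcal A(u_k)^2|\nabla u_k|^2=|D^2u_k|^2-\frac{|D^2u_k\nabla u_k|^2}{|\nabla u_k|^2}
\]
converges pointwise on $U$ to the corresponding expression for $u_\infty$. Fatou's lemma then gives $\int_U\mathcal A(u_\infty)^2|\nabla u_\infty|^2=0$, so $\mathcal A(u_\infty)\equiv0$ on $U$. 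We now avoid the critical set by analyticity. The function $u_\infty$ is real analytic, being a solution of a semilinear equation with polynomial nonlinearity.

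\emph{Case $U=\emptyset$.} Then $u_\infty$ is constant on $B_2$, hence on $\mathbb R^3$, and equals a zero of $W'$, namely $0$ or $\pm1$.

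\emph{Case $U\neq\emptyset$.} Pick a small ball $B\subset U$. There $\mathrm{II}\equiv0$ on the level sets and $|\nabla u_\infty|$ is constant along them, so the level sets are parallel planes with a common normal $e$, and $u_\infty=\varphi(e\cdot x)$ on $B$. Here $\varphi$ solves $\varphi''=W'(\varphi)$ on an interval. Extend $\varphi$ as the global ODE solution, which is bounded by energy conservation once it is bounded on the interval. Then $\varphi(e\cdot x)$ and $u_\infty$ are analytic and agree on $B$, so $u_\infty(x)=\varphi(e\cdot x)$ on all of $\mathbb R^3$.

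\textbf{Ruling out the unstable profiles.} Conservation of $\tfrac12\varphi'^2-W(\varphi)$ shows that the bounded solutions of $\varphi''=\varphi^3-\varphi$ are the constants $0,\pm1$, the shifted heteroclinics $\pm g(\cdot-t_0)$, and periodic orbits. It remains to exclude $0$ and the periodic orbits using stability of $u_\infty$.
\begin{itemize}
\item For $u_\infty\equiv0$ we have $W''(0)=-1$, and a test function with $\int|\nabla\xi|^2<\int\xi^2$ violates \eqref{eq:stability}. For instance, take $\xi$ equal to $1$ on $B_R$ with a unit-width cutoff; then $\int|\nabla\xi|^2=O(R^2)$ while $\int\xi^2\gtrsim R^3$.
\item For a nonconstant periodic $\varphi$, the function $\psi=\varphi'$ solves the linearized equation $-\psi''+W''(\varphi)\psi=0$ and changes sign. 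Restricting $\psi$ to an interval between consecutive zeros gives a Dirichlet eigenfunction with eigenvalue $0$. Since $\varphi'$ vanishes and changes sign over each period, the first Dirichlet eigenvalue on a slightly larger interval is negative. Multiplying the corresponding eigenfunction in $e\cdot x$ by a cutoff in the orthogonal directions with gradient $O(1/R)$ then gives $Q_{u_\infty}(\xi)<0$ for $R$ large, contradicting stability.
\end{itemize}

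Hence $u_\infty\in\{\pm1\}$ or $u_\infty=g(e\cdot x-t_0)$, after replacing $e$ by $-e$ if needed. Since $u_k\to u_\infty$ in $C^2(B_2)$, this contradicts the choice of $u_k$ and proves the lemma.
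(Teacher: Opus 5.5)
Your proof is correct. It follows the paper's skeleton: contradiction, $C^2_{\mathrm{loc}}$ compactness, and Fatou on $B_2\cap\{\nabla u_\infty\neq0\}$. It differs from the paper in two places.

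\begin{itemize}
\item \emph{The case $u_\infty\equiv\pm1$.} The paper first runs a separate compactness step showing that solutions with $|u(0)|>1-\lambda$ are already $\varepsilon/2$-close to $\pm1$, so the limit can be taken with $|u_\infty(0)|\le1-\lambda$. You instead allow $u_\infty\equiv\pm1$ and observe that this already contradicts the choice of $u_k$, because $\pm1$ are $C^2(B_2)$-limits of translated heteroclinics. This makes the preliminary step unnecessary.
\item \emph{The rigidity step.} The paper cites \cite[Lemma~2.6 and Proposition~2.8]{FS2025} to pass from $\mathcal A(u_\infty)\equiv0$ in $B_2$ to a translated heteroclinic. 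You prove this directly. Analyticity handles the critical set: either $\nabla u_\infty\equiv0$ on $B_2$, or there is a ball on which $\mathcal A=0$ forces $D^2u_\infty=\lambda\,\nu\otimes\nu$, hence $D\nu=0$. Your justification of the common normal is compressed, but correct for this reason. Analyticity also propagates one-dimensionality to $\mathbb R^3$. The phase-plane classification, together with explicit test functions, then excludes $u_\infty\equiv0$ and the periodic profiles.
\end{itemize}
The paper's version is shorter; yours is self-contained and elementary.

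One sentence needs repair. A solution of $\varphi''=\varphi^3-\varphi$ that is bounded on a short interval need not be globally bounded. For example, $\varphi(0)=0$, $\varphi'(0)=10$ has positive energy, so $\varphi'$ never vanishes and $\varphi$ blows up in finite time. Energy conservation alone therefore does not give the global extension. The fix is to take the ODE solution on its maximal interval $(a,b)$. Then $\varphi(e\cdot x)$ and $u_\infty$ are analytic and agree on $B$, hence on the connected slab $\{a<e\cdot x<b\}$. So $|\varphi|\le1$ and $|\varphi'|\le1/\sqrt2$ on $(a,b)$, which rules out blow-up and forces $(a,b)=\mathbb R$. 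With this change, the rest of your argument goes through as written.
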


\begin{proof}
Fix $\varepsilon>0$. By compactness (as in \cite[proof of Theorem~3.4]{FS2025}), there exists $\lambda>0$ such that every solution with $|u(0)|>1-\lambda$ is within $\varepsilon/2$ of one of the constants $\pm1$ in $C^2(B_2)$. Since these constants are limits of translated heteroclinics, the conclusion follows in this case.

If the conclusion fails in the remaining case as $\delta\downarrow0$, compactness yields a stable limit $u_\infty$ in $C^2_{\mathrm{loc}}(\mathbb R^3)$ with $|u_\infty(0)|\leq1-\lambda$. Then Fatou's lemma on $B_2\cap\{\nabla u_\infty\ne0\}$ gives vanishing curvature, hence $\mathcal A(u_\infty)\equiv0$ in $B_2$. But then (see \cite[Lemma~2.6 and Proposition~2.8]{FS2025}) $u_\infty$ is a translated heteroclinic, contradicting the failure of approximation in $C^2(B_2)$.
\end{proof}

\section{The bad set and the bounded cluster}
\label{sec:bounded-cluster}
Theorem~\ref{lem:wang-wei-profiles} and Lemma~\ref{lem:small-curvature-heteroclinic} readily motivate the notion of a ``bad ball'', which we now introduce, following \cite{CFFS2026}. The main idea is:
\begin{itemize}
\item $\{u=0\}$ behaves more and more like a minimal surface the further one goes from the bad balls; see \eqref{eq:ww-graph-bounds}.
\item The bad set is controlled via stability, by Lemma~\ref{lem:sternberg-zumbrun}.
\end{itemize}

Fix once and for all the universal $\delta>0$ supplied by Lemma~\ref{lem:small-curvature-heteroclinic} with $\varepsilon=\min\{\bar\delta/2,1/100\}$, where $\bar\delta$ is from Theorem~\ref{lem:wang-wei-profiles}. This choice is independent of $u$.

\begin{defn}[Bad set, clean ball]\label{defn:bad-set}
For a bounded entire stable solution $u$ of \eqref{eq:allen-cahn}, set
\begin{equation}\label{eq:bad-set}
 \mathcal X=\bigg\{x:\int_{B_2(x)}\mathcal A^2|\nabla u|^2>\delta\bigg\}.
\end{equation}
Choose a maximal set $\mathcal Z\subset\mathcal X$ with $|z-z'|\geq1$ for distinct $z,z'\in\mathcal Z$. A ball $B_R(x)$ is \textit{clean} if $B_{2R}(x)\cap\mathcal X=\varnothing$.
\end{defn}

By Lemma~\ref{lem:small-curvature-heteroclinic} and the fixed choice of $\delta$, every clean ball with $R\geq R_*$ satisfies \eqref{eq:ww-approximation-hypothesis}. At a zero outside $\mathcal X$, the same $C^2$ approximation gives a uniformly positive derivative in the profile direction throughout its half-unit ball. Thus, after decreasing $c_\circ$ if necessary,
\begin{equation}\label{eq:clean-gradient-bound}
 |\nabla u|\geq c_\circ\quad\text{on }\Sigma_0\setminus\mathcal X.
\end{equation}
Moreover, $\Sigma_0\cap B_{1/2}(y)$ is a graph with uniformly bounded slope for each $y\in\Sigma_0\setminus\mathcal X$. The set $\mathcal Z$ is locally finite. Its centered unit balls cover $\mathcal X$, and the doubled balls have overlap at most $5^3$.

\begin{remark}\label{rem:bad-set-nonempty}
For this fixed $\delta$, $\mathcal X=\varnothing$ implies that $u$ is one-dimensional. Indeed, since we can apply Lemma~\ref{lem:small-curvature-heteroclinic} around every $x\in\mathbb R^3$, the hypotheses of Theorem~\ref{lem:wang-wei-profiles} hold with $R$ arbitrarily large, and sending $R\to\infty$ shows that $u$ is one-dimensional (see \eqref{eq:ww-enhanced-curvature} for instance).
\end{remark}

Similarly to  \cite[Lemma 10.8]{CFFS2026}, a logarithmic cutoff argument lets us find an arbitrarily wide clean annulus:
\begin{lem}[Existence of a clean annulus]\label{lem:clean-annulus}
Assume $\mathcal X\ne\varnothing$. For every $L\geq1$ and $\Lambda>0$, there exist $z_\Lambda\in\mathcal Z$ and $R_\Lambda>1$ such that
\begin{equation}\label{eq:clean-annulus}
 \operatorname{dist}(x,\mathcal X)\geq L
 \quad\text{for every }x\in B_{R_\Lambda+\Lambda}(z_\Lambda)
                         \setminus B_{R_\Lambda}(z_\Lambda).
\end{equation}
\end{lem}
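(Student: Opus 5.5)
The plan is to argue by contradiction, pitting the Sternberg--Zumbrun inequality (Lemma~\ref{lem:sternberg-zumbrun}) against a lower bound on how much curvature the bad set forces, in the spirit of \cite[Lemma~10.8]{CFFS2026}. I have not found a weight that closes the argument with only the cubic bound; the last step below is an unresolved gap, not a completed proof.

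\emph{Reduction.} If $\mathcal Z$ is finite, the statement is immediate. Pick any $z_\Lambda\in\mathcal Z$ and choose $R_\Lambda$ so large that $B_{R_\Lambda-L}(z_\Lambda)$ contains $\bigcup_{z\in\mathcal Z}B_1(z)\supset\mathcal X$. Then every point outside $B_{R_\Lambda}(z_\Lambda)$ is at distance at least $L$ from $\mathcal X$. So assume $\mathcal Z$ is infinite and that \eqref{eq:clean-annulus} fails for every $z\in\mathcal Z$ and every $R>1$.

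\emph{Quantifying the failure.} Fix $z_0\in\mathcal Z$. Failure means every annulus $B_{R+\Lambda}(z_0)\setminus B_R(z_0)$ contains a point within $L$ of $\mathcal X$, hence within $L+1$ of some element of $\mathcal Z$. Splitting $[1,r]$ into consecutive windows of length $\Lambda$ then gives roughly $r/(\Lambda+2L+2)$ distinct points of $\mathcal Z$ in $B_r(z_0)$, for every large $r$. Two facts turn this into curvature:
\begin{itemize}
\item each $z\in\mathcal Z$ carries $\int_{B_2(z)}\mathcal A^2|\nabla u|^2>\delta$;
\item the balls $B_2(z)$ overlap at most $5^3$ times.
\end{itemize}
Consequently $\int_{B_r(z_0)}\mathcal A^2|\nabla u|^2\geq c\,\delta\,r/(\Lambda+L)$. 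The chain of bad points also has a much stronger property. Applying the same reasoning around every $z\in\mathcal Z$, the set $\mathcal Z$ is $(\Lambda+2L+2)$-dense in radial distance from each of its points, so it cannot thin out at any scale.

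\emph{Stability test and comparison.} Test Lemma~\ref{lem:sternberg-zumbrun} with $\varphi$ radial around $z_0$. The right-hand side is estimated using only $|\nabla u|\leq 1/\sqrt2$ from Lemma~\ref{lem:uniform-regularity-phase-decay}, i.e.\ the crude cubic bound $\int_{B_r}|\nabla u|^2\leq Cr^3$. I would first try a Hardy-type weight $\varphi=|x-z_0|^{-\alpha}\eta$, with $\eta$ a logarithmic cutoff between radii $1$ and $R$, and then compare the two sides dyadically in $r$:
\begin{itemize}
\item left-hand side: each dyadic shell $\{|x-z_0|\sim r\}$ contributes at least $\gtrsim \delta r^{1-2\alpha}/(\Lambda+L)$;
\item right-hand side: each shell contributes at most $\lesssim r^{1-2\alpha}$, plus the logarithmic cutoff error.
\end{itemize}

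\emph{Main obstacle.} These two bounds scale identically, so the naive comparison is borderline and yields no contradiction. Extracting a contradiction requires either an improved energy bound for $|\nabla u|^2$ or a refined count of bad points. My plan for the first route is to exploit the clean regions between bad points: there Theorem~\ref{lem:wang-wei-profiles} gives sheets with separation \eqref{eq:ww-sheet-separation}, so $|\nabla u|^2$ is concentrated on layers whose number per unit length is at most $C/\log R$. This should yield a logarithmic gain on the right-hand side, which the logarithmic cutoff then converts into a contradiction as $R\to\infty$. Whether that gain is actually available, uniformly enough to beat the borderline scaling, is exactly the step I cannot yet confirm.
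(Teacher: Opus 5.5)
As you acknowledge, the proposal does not prove the lemma. The obstruction you identify comes from the radial weight, not from the crude bound $|\nabla u|\le 1/\sqrt2$, which in fact suffices. Two ingredients are missing.

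\emph{An upper count for bad points.} You use stability only to bound the right-hand side, never to count bad points from above. Testing Lemma~\ref{lem:sternberg-zumbrun} with a cutoff at scale $t$ around an arbitrary point $a$ gives $\delta\,\#(\mathcal Z\cap B_t(a))\le Ct$ for all $t\ge1$. You already observed that, under the failure hypothesis, your lower bound holds around every $z\in\mathcal Z$. Together these give $ct\le\#(\mathcal Z\cap B_t(z))\le Ct$ for $t\ge t_0$, so $\mathcal Z$ is one-dimensional at every scale and around each of its points.

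\emph{A cutoff adapted to the bad set.} The cutoff must be logarithmic in the distance to the bad set, not in $|x-z_0|$. Set $d_R=\operatorname{dist}(\cdot,\bigcup_{q\in\mathcal Z\cap B_R(z_0)}\overline B_2(q))$. A maximal $t$-separated subset of $\mathcal Z\cap B_R(z_0)$ has at most $CR/t$ points, because its disjoint $t/3$-balls each contain $\ge ct$ of the $\le CR$ centers in $B_{2R}(z_0)$. Hence $|\{d_R<t\}|\le CR(1+t)^2$. The test function $\eta_R=(1-\log(1+d_R)/\log(1+R))_+$ equals $1$ on all $\gtrsim R$ balls $B_2(q)$, so it captures curvature $\ge cR$. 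On the other hand, the layer-cake formula gives $\int|\nabla\eta_R|^2\le CR/\log R$. This contradicts $\int\mathcal A^2|\nabla u|^2\eta^2\le\frac12\int|\nabla\eta|^2$ for large $R$. A radial weight cannot work because it pays Dirichlet energy throughout a three-dimensional ball. It suffices instead to cut off a tube around a curve-like set, and such a tube has logarithmically small capacity in $\mathbb R^3$.

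Your Wang--Wei route can, as far as I can see, also be closed, but again only with this upper count. Without it, nothing prevents $\mathcal X$ from filling $B_{2R}$ at unit density, leaving no clean regions to exploit. With it, the part of $B_{2R}$ within distance $\sqrt R$ of $\mathcal X$ has volume $O(R^{5/2})$. At the remaining points, Theorem~\ref{lem:wang-wei-profiles} applies at scale $\sim\sqrt R$, so sheets are separated by $\gtrsim\log R$. Each ball of radius $\sim\log R$ then meets boundedly many sheets and, by the exponential decay in Lemma~\ref{lem:uniform-regularity-phase-decay}, carries energy $O(\log^2R)$. This gives $\int_{B_{2R}}|\nabla u|^2\le CR^3/\log R$. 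A plain linear cutoff in Lemma~\ref{lem:sternberg-zumbrun} then yields $\#(\mathcal Z\cap B_R(z_0))\le CR/\log R$, which contradicts the linear lower bound; no logarithmic cutoff is needed. That argument depends on the full sheeting theory, whereas the paper's proof uses only $|\nabla u|\le1/\sqrt2$ and the combinatorics of $\mathcal Z$.
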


\begin{proof}
The proof is a variation of  \cite[Proof of Lemma 10.8]{CFFS2026}. First, observe that by Lemmas~\ref{lem:sternberg-zumbrun} and~\ref{lem:uniform-regularity-phase-decay},
\begin{equation}\label{eq:annulus-capacity}
 \int\mathcal A^2|\nabla u|^2\eta^2
 \leq\int|\nabla u|^2|\nabla\eta|^2
 \leq\frac12\int|\nabla\eta|^2
 \quad\text{ for all}\quad\eta\in C_c^{0,1}(\mathbb R^3).
\end{equation}
Taking $\eta=1$ on $B_t(a)$, supported in $B_{2t}(a)$, with $|\nabla\eta|\leq C/t$, and using the overlap bound gives, for $t\geq1$,
\begin{equation}
 \int_{B_t(a)}\mathcal A^2|\nabla u|^2\leq Ct,\qquad
 \delta\,\#(\mathcal Z\cap B_t(a))
 \leq C\int_{B_{t+2}(a)}\mathcal A^2|\nabla u|^2\leq Ct.
 \label{eq:annulus-upper-count}
\end{equation}
Now, if \eqref{eq:clean-annulus} fails, then for every $z\in\mathcal Z$ and $r>1$ there is $q\in\mathcal Z$ with $r-L-1<|q-z|<r+\Lambda+L+1$. Taking disjoint such intervals and using \eqref{eq:annulus-upper-count}, we obtain
\[
 ct\leq\#(\mathcal Z\cap B_t(z))\leq Ct
 \quad\text{ for all}\quad z\in\mathcal Z,\ t\geq t_0,
\]
where $t_0\geq2$ and $c,C>0$ depend only on $L,\Lambda$. These linear bounds allow us to construct a logarithmic cutoff whose Dirichlet energy is smaller than the curvature mass it retains:

Fix $z_0\in\mathcal Z$, take $R$ large, and let
\[
 d_R(x)=\operatorname{dist}\bigg(x,
       \bigcup_{q\in\mathcal Z\cap B_R(z_0)}\overline B_2(q)\bigg).
\]
For $3t_0\leq t\leq R$, a maximal $t$-separated subset of $\mathcal Z\cap B_R(z_0)$ has at most $CR/t$ points, since its disjoint $t/3$-balls each contain at least $ct$ centers, all lying in $B_{2R}(z_0)$. By maximality, the corresponding $3t$-balls cover $\{d_R<t\}$. For $t<3t_0$, we instead cover this set by enlarging each of the original balls $B_2(q)$ to $B_{t+2}(q)$, as there are at most $CR$ such balls. These two coverings give
\[
 |\{d_R<t\}|\leq CR(1+t)^2\quad\text{for}\quad 0<t\leq R.
\]
Since the cutoff $\eta_R=\left(1-\frac{\log(1+d_R)}{\log(1+R)}\right)_+$ equals one on the balls $B_2(q)$ used to define $d_R$, we have
\[
 \int\mathcal A^2|\nabla u|^2\eta_R^2
 \geq5^{-3}\sum_{q\in\mathcal Z\cap B_R(z_0)}
            \int_{B_2(q)}\mathcal A^2|\nabla u|^2
 \geq cR,
\]
whereas the preceding volume bound gives
\[
\begin{aligned}
 \int|\nabla\eta_R|^2
 &\leq\frac1{\log^2(1+R)}
       \int_{\{0<d_R<R\}}\frac{dx}{(1+d_R)^2} \leq\frac1{\log^2(1+R)}
   \left(\frac{|\{d_R<R\}|}{(1+R)^2}
       +2\int_0^R\frac{|\{d_R<t\}|}{(1+t)^3}\,dt\right)
 \leq\frac{CR}{\log(1+R)},
\end{aligned}
\]
contradicting \eqref{eq:annulus-capacity} as $R\to\infty$.
\end{proof}

\begin{defn}[Bounded cluster and its neighborhoods]\label{defn:bounded-cluster}
Assume $\mathcal X\ne\varnothing$, and fix $L_0\geq2$ and $\Lambda\geq32L_0$. Choose $z_\Lambda,R_\Lambda$ by Lemma~\ref{lem:clean-annulus} with $L=8L_0$, and set
\begin{equation*}
 \mathcal X_\Lambda=\mathcal X\cap B_{R_\Lambda}(z_\Lambda),
 \qquad
 \mathcal Z_\Lambda=\mathcal Z\cap B_{R_\Lambda}(z_\Lambda),
 \qquad N=\#\mathcal Z_\Lambda.
\end{equation*}
Define the distance to this bounded cluster and its neighborhoods by
\begin{equation*}
 \cD(x)=\operatorname{dist}(x,\mathcal X_\Lambda),
 \qquad \mathcal S_\alpha=\{x:\cD(x)<\alpha\}\quad(\alpha>0),
\end{equation*}
and let $ \Sigma=\Sigma_0\cap\{L_0<\cD<\Lambda/3\}.$
\end{defn}

Since the annulus in \eqref{eq:clean-annulus} is disjoint from every $B_{8L_0}(z)$ with $z\in\mathcal X$, it follows that
\begin{equation}\label{eq:cluster-radial-bounds}
\begin{aligned}
 |z-z_\Lambda|&\leq R_\Lambda-8L_0
 &&\text{for all}\quad z\in {\mathcal X_\Lambda},\\
 |z-z_\Lambda|&\geq R_\Lambda+\Lambda+8L_0
 &&\text{for all}\quad z\in\mathcal X\setminus\mathcal X_\Lambda.
\end{aligned}
\end{equation}
In particular, $\overline{\mathcal X_\Lambda}$ is nonempty and compact, and $1\leq N<\infty$. Also, every point of $\mathcal X_\Lambda$ lies within distance one of $\mathcal Z_\Lambda$, and by the overlap bound
\begin{equation}\label{eq:cluster-cover-mass}
 \mathcal S_\alpha\subset\bigcup_{z\in\mathcal Z_\Lambda}B_{\alpha+1}(z),\qquad
 \delta N\leq\sum_{z\in\mathcal Z_\Lambda}
     \int_{B_2(z)}\mathcal A^2|\nabla u|^2
 \leq5^3\int_{\mathcal S_2}\mathcal A^2|\nabla u|^2.
\end{equation}
Moreover, every exterior bad point has distance at least $\Lambda+16L_0-\cD(x)>\cD(x)$ from $x$ if $\cD(x)<\Lambda/3$. Consequently,
\begin{equation}\label{eq:cluster-full-distance}
 \operatorname{dist}(x,\mathcal X)=\cD(x)
 \qquad\text{whenever }\cD(x)<\Lambda/3.
\end{equation}
In particular, $B_{\cD(x)/2}(x)\cap\mathcal X=\varnothing$ when $L_0<\cD(x)<\Lambda/3$, so $B_{\cD(x)/4}(x)$ is clean. Thus, $\Sigma$ is a smooth embedded surface by \eqref{eq:clean-gradient-bound}, with
\begin{equation}\label{eq:cluster-curvature-bounds}
 |\mathrm{II}_\Sigma(y)|\leq C\cD(y)^{-1},\qquad
 |H_\Sigma(y)|\leq C\cD(y)^{-2}
 \quad\text{for all}\quad y\in\Sigma.
\end{equation}
These bounds follow from Theorem~\ref{lem:wang-wei-profiles} when $\cD(y)\geq4R_*$, and from \eqref{eq:clean-gradient-bound} and Lemma~\ref{lem:uniform-regularity-phase-decay} at the remaining scales.

\section{A geometric form of stability}
\label{sec:bad-ball-stability}
We work in the setting of Definition~\ref{defn:bounded-cluster}, with $L_0$ sufficiently large. The goal of this section is to derive a geometric stability inequality on the zero set of $u$, away from the bad set, while retaining the positive curvature contribution near the bad set.

\begin{defn}[Normal coordinates and endpoints]
\label{defn:normal-coordinates-endpoints} 
Fix $y\in\Sigma$, and set the truncation distance
\begin{equation*}
 T(y)=3\log \cD(y).
\end{equation*}
Orient the normal vector towards increasing values of $u$, and let $F$ be the associated Fermi coordinate map:
\begin{equation*}
 \nu(y)=\frac{\nabla u(y)}{|\nabla u(y)|},\qquad F(y,t)=y+t\nu(y).
\end{equation*}
Move along each normal ray, stopping when another zero is equally close or at distance $T(y)$, whichever occurs first; the signed endpoints are denoted $b_\pm(y)$. Formally:
\begin{equation}\label{eq:ww-endpoints-definition}
 b_\pm(y)=\pm\sup\left\{s\in[0,T(y)]:
 \begin{array}{l}
 y\text{ is the unique nearest point in }\Sigma_0  
       \text{ to }F(y,\pm t)\\
 \text{for every }0\leq t<s
 \end{array}\right\}.
\end{equation}
Thus $b_\pm(y)=\pm T(y)$ if equidistance is not reached before truncation. The spanned normal interval is
\begin{equation}\label{eq:ww-normal-interval}
 \{F(y,t):b_-(y)<t<b_+(y)\}.
\end{equation}
It is convenient to define its extension by one unit on each end, denoted
\begin{equation}\label{eq:normal-param-manifold}
 \mathcal N=\{(y,t):y\in\Sigma,\ b_-(y)-1<t<b_+(y)+1\}.
\end{equation}
For $L_0$ large, $F$ gives smooth normal coordinates locally on $\mathcal N$ (this is justified below). Note that while the normal intervals from \eqref{eq:ww-normal-interval} have disjoint images, their extensions in $\mathcal N$ overlap, so $F$ need not be globally injective there.

Given $(y,t)\in\mathcal N$, choose an open neighborhood $\Gamma\subset\Sigma$ of $y$ on which these coordinates are injective, with $\Gamma\times\{t\}\subset\mathcal N$, and write
\[
 \Gamma^t=F(\Gamma,t),\qquad \Gamma^0=\Gamma.
\]
All quantities on $\Gamma^t$ below are evaluated at $F(y,t)$, whereas quantities on $\Gamma$ are evaluated at $y$. Ambient functions are composed with $F$; in particular, writing $u(y,t)=u(F(y,t))$, we have $\partial_tu(y,t)=\partial_t(u\circ F)(y, t) = \nabla u(F(y, t))\cdot \nu(y)$.
\end{defn}

We record some auxiliary estimates for future reference.
\begin{lem}[Auxiliary estimates]\label{lem:ww-normal-profiles}
In the setting above, the following hold for $L_0$ sufficiently large and with universal constants.
\begin{enumerate}
\renewcommand{\labelenumi}{(\roman{enumi})}
\renewcommand{\theenumi}{\roman{enumi}}
\item\label{item:ww-normal-intervals} \emph{Fermi coordinates.} For every $y\in\Sigma$, there is an open neighborhood $\Gamma\subset\Sigma$ of $y$ such that $F$ is injective on
\begin{equation}\label{eq:ww-normal-range}
 \Gamma\times[-4\log \cD(y)-1,4\log \cD(y)+1].
\end{equation}
Also, its Jacobian at $(y,t)$ is $1+O(|t|\cD(y)^{-1})$.
\item\label{item:ww-endpoints} \emph{Truncation and endpoint bounds.} The truncation distance satisfies
\begin{equation}\label{eq:ww-truncation-distance}
 |\nabla_\Sigma T(y)|\leq3\cD(y)^{-1}\leq1\quad\text{a.e.}
\end{equation}
The endpoint functions $b_\pm$ in \eqref{eq:ww-endpoints-definition} are locally Lipschitz and satisfy
\begin{equation}\label{eq:ww-endpoint-bounds}
 -T(y)\leq b_-(y)\leq-\frac{\log \cD(y)}2,\qquad
 \frac{\log \cD(y)}2\leq b_+(y)\leq T(y),\qquad
 |\nabla_\Sigma b_-|+|\nabla_\Sigma b_+|\leq C\quad\text{a.e.}
\end{equation} 
\item\label{item:ww-profiles} \emph{Estimates on normal quantities.} For $y\in\Sigma$,
\begin{equation}\label{eq:ww-core-profile}
 |\partial_tu(y,t)-g'(t)|\leq C\cD(y)^{-2}e^{\sqrt2|t|}
 \quad\text{if}\quad |t|\leq\frac{\log \cD(y)}2.
\end{equation}
At an endpoint $F(y,b_\pm(y))$ reached by equidistance, let $\nu_\pm$ be the unit normal of the adjacent neighboring sheet, evaluated at its nearest point. Orient it so that $\nu_\pm\cdot\nu(y)>0$, i.e. opposite to $\nabla u$ on that sheet. Then
\begin{equation}\label{eq:ww-endpoint-angle}
 |\nu(y)-\nu_\pm|\leq C\cD(y)^{-1}
       \bigl(2|b_\pm(y)|-\sqrt2\log \cD(y)+C\bigr).
\end{equation}
\end{enumerate}
\end{lem}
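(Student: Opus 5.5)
All three parts will be read off from Theorem~\ref{lem:wang-wei-profiles}, applied at each $y\in\Sigma$ on the clean ball $B_{\cD(y)/4}(y)$, i.e. with $R\simeq c\,\cD(y)$. This is legitimate once $L_0\geq 4R_*/c$. On that ball $\Sigma_0$ is a union of graphs $\Gamma_j$ with $|\mathrm{II}|\leq C\cD(y)^{-1}$, the sheets are separated by at least $\sqrt2\log\cD(y)-C$, and $u$ is $C^2$-close, to order $\cD(y)^{-2}$, to the alternating profile sum $G_0$. Every quantity in the lemma is local at scale $O(\log\cD(y))\ll\cD(y)$, so only these estimates are needed.

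\emph{Part (i).} The sheet $\Gamma_{j_0}$ through $y$ has principal curvatures at most $C\cD(y)^{-1}$ on a ball of radius $c\cD(y)$. Hence the normal exponential map is a local diffeomorphism for $|t|\leq c'\cD(y)$. Its Jacobian is $\prod_i(1-t\kappa_i)=1+O(|t|\cD(y)^{-1})$. Injectivity on a small neighborhood $\Gamma$ times $[-4\log\cD(y)-1,4\log\cD(y)+1]$ then follows from the inverse function theorem. Indeed, the interval length is much smaller than the focal distance, and we may shrink $\Gamma$ freely.

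\emph{Part (ii).} The bound $|\nabla_\Sigma T|=3|\nabla\cD|/\cD\leq 3\cD^{-1}$ holds a.e. since $\cD$ is $1$-Lipschitz, and $\cD>L_0\geq3$ gives $3\cD^{-1}\leq1$.

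For the lower bound on $|b_\pm|$, note that equidistance at $F(y,t)$ means some other point of $\Sigma_0$ is at distance $|t|$ from $F(y,t)$. Such a point lies either on another sheet or on $\Gamma_{j_0}$ itself. The latter is impossible for $|t|\ll\cD(y)$ by the curvature bound. In the former case the separation estimate \eqref{eq:ww-sheet-separation} gives $2|t|\geq\sqrt2\log\cD(y)-C\geq\tfrac{\log\cD(y)}{2}\cdot2$ for $L_0$ large. The upper bound $|b_\pm|\leq T$ holds by definition.

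For the Lipschitz bound, write $b_\pm=\pm\min\{T,\beta_\pm\}$, where $\beta_\pm$ is the cut (equidistance) time. In the local graph picture, $\beta_+(y)$ is the solution $t$ of $t=\operatorname{dist}(F(y,t),\Gamma_{j_0+1})$. The map $t\mapsto t-\operatorname{dist}(F(y,t),\Gamma_{j_0+1})$ has $t$-derivative $1-\nu(y)\cdot\nabla d_{j_0+1}\approx 2$, since the neighboring sheet is nearly parallel (by (iii) below, or directly from $|Df_j|\leq C$ together with the curvature bound over distances $O(\log\cD)$). Its $y$-derivative is bounded. The implicit function theorem then gives $|\nabla\beta_\pm|\leq C$. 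Taking the minimum with $T$, whose gradient is at most $1$, preserves the Lipschitz bound a.e.

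\emph{Part (iii).} For \eqref{eq:ww-core-profile}, differentiate \eqref{eq:ww-profile-inputs-proof} along the normal:
\[
\partial_tu=\sum_j\pm g'(d_j)\,\nabla d_j\cdot\nu(y)+O(\cD^{-2}).
\]
The main term is $g'(d_{j_0}(F(y,t)))=g'(t)$. The other sheets lie at distance at least $\sqrt2\log\cD-C-|t|$ from $F(y,t)$. Since $g'(s)\leq Ce^{-\sqrt2|s|}$, their total contribution is at most
\[
C\sum_{k\geq1} e^{-\sqrt2(k(\sqrt2\log\cD-C)-|t|)}\leq C\cD^{-2}e^{\sqrt2|t|}.
\]
This uses the separation bound to sum over sheets uniformly, as in the proof of Theorem~\ref{lem:wang-wei-profiles}.

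The angle estimate \eqref{eq:ww-endpoint-angle} is the step I expect to be the main obstacle. The plan is to compare the two sheets as graphs $f_{j_0}<f_{j_0+1}$ over a common plane near $y$. The gap $w=f_{j_0+1}-f_{j_0}$ is at least $\sqrt2\log\cD-C$ everywhere on a ball of radius $c\cD$, whereas at $y$ the gap is about $2|b_+|$. The function $w$ also has bounded $C^{1,1}$ norm at scale $\cD$: $|D^2w|\leq C\cD^{-1}$ from the $\mathrm{II}$ bounds. For a function with $w\geq m$ and $|D^2w|\leq K$, the standard interpolation gives
\[
|Dw(0)|^2\leq 2K\,(w(0)-m)
\]
(provided the ball is large enough, which holds here). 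This yields only $|Dw|\lesssim\cD^{-1/2}(2|b_+|-\sqrt2\log\cD+C)^{1/2}$. That is weaker than claimed unless one also uses $|Dw|$ bounded by $C\cD^{-1}$ times the relevant length. To get the linear bound, I would use instead that the nearest-point segment from $F(y,b_+)$ meets $\Gamma_{j_0+1}$ orthogonally. The angle between $\nu(y)$ and $\nu_+$ is then controlled by the rotation of the normal of $\Gamma_{j_0}$ and $\Gamma_{j_0+1}$ along a path of length comparable to the horizontal offset between $y$ and the nearest point. By the equidistance geometry together with the gap minimality above, that offset is bounded by $C(2|b_+|-\sqrt2\log\cD+C)$. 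Multiplying the offset by $|\mathrm{II}|\leq C\cD^{-1}$ gives \eqref{eq:ww-endpoint-angle}. Making the offset bound rigorous is where I expect the real work.
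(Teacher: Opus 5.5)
Your arguments for (i), (ii) and for \eqref{eq:ww-core-profile} are essentially the paper's. They use curvature bounds for the normal map, sheet separation for $|b_\pm|\geq\frac12\log\cD$, the implicit function theorem for $t\mapsto t-r(F(y,t))$, and summation of the exponentially small tails of the other sheets. One small correction: near-parallelism of adjacent sheets does not follow from $|Df_j|\leq C$ alone. It does follow from your own interpolation bound applied to the gap, which gives an angle $\lesssim(\log\cD/\cD)^{1/2}$, or from Wang--Wei's near-parallelism lemma, and either suffices for $\partial_t(t-r)\geq1$.

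The genuine gap is the angle estimate \eqref{eq:ww-endpoint-angle}, and the route you sketch cannot close it. Suppose the ``offset'' is the displacement between $y$ and the nearest point $Q_+$. Then bounding $|\nu(y)-\nu_+|$ by $|\mathrm{II}|$ times this length tacitly assumes that the two sheets have nearly equal normals at vertically aligned points. But the upper sheet's normal above $y$ differs from $\nu(y)$ by about $|\nabla(f_+-f)|$ at $y$, which is exactly the quantity to be estimated, so the argument is circular. Suppose instead the offset is the distance to a point where the sheets are parallel. Then the claimed bound $C(2|b_+|-\sqrt2\log\cD+C)$ is false under the hypotheses you invoke. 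In fact no argument using only $|\mathrm{II}|\leq C\cD^{-1}$ and the separation lower bound can give a linear estimate, because your interpolation inequality is sharp for those hypotheses. Take $f\equiv0$ and $f_+(x')=m+\frac{1}{2\cD}(x_1-\rho)^2$ with $m=\sqrt2\log\cD-C$ and $\rho=\sqrt{2\cD}$. On a disk of radius $c\cD$ both graphs have bounded slope and curvature at most $\cD^{-1}$. The gap is everywhere at least $m$ and equals $m+1$ at $0$, so $2|b_+|-\sqrt2\log\cD$ stays bounded. Yet the normals at $0$ differ by about $\sqrt{2/\cD}\gg\cD^{-1}$. What rules this configuration out is the mean-curvature bound $|H_{\Gamma_j}|\leq C\cD^{-2}$ contained in \eqref{eq:ww-graph-bounds}, which your plan never uses.

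The missing idea, which is what the paper uses, is an elliptic equation for the gap. Write both sheets as graphs over a common plane on a disk of radius $c\cD(y)$ and subtract their mean-curvature equations. This gives $\operatorname{div}(a\nabla(f_+-f))=H_+-H$ with $a$ uniformly elliptic, $|\nabla a|\leq C\cD^{-1}$ and $|H_+-H|\leq C\cD^{-2}$. By separation, $w=f_+-f-\sqrt2\log\cD+C_0+1\geq1$. After rescaling the disk to unit size the right-hand side becomes $O(1)$ and is absorbed by $w\geq1$. The Harnack inequality and interior gradient estimates then give $|\nabla w(0)|\leq C\cD^{-1}w(0)$, a linear bound with no square root. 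To read off the angle without any offset estimate, take the plane orthogonal to the bisector $e=(\nu+\nu_+)/|\nu+\nu_+|$. From $F(y,\ell)=y+\ell\nu=Q_+-\ell\nu_+$ one gets $Q_+-y=\ell(\nu+\nu_+)\parallel e$, so $y$ and $Q_+$ share horizontal coordinate $0$. Hence $f_+(0)-f(0)=2\ell(\nu\cdot e)\leq2\ell$ and $|\nu-\nu_+|=(\nu\cdot e)|\nabla(f_+-f)(0)|$. Combined with the Harnack bound, this yields \eqref{eq:ww-endpoint-angle}.
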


\begin{proof}
Fix $y\in\Sigma$ and set $R=\cD(y)/4$. By \eqref{eq:cluster-full-distance}, $B_R(y)$ is clean. Apply Lemma~\ref{lem:small-curvature-heteroclinic} and Theorem~\ref{lem:wang-wei-profiles} there. Its larger graph charts extend a distance comparable to $\cD(y)$ beyond their interior restrictions. Taking $L_0$ large enough that $4\log \cD(y)+2<cR$ puts the normal segments in \eqref{eq:ww-normal-range} and their unit neighborhoods inside $B_{cR}(y)$, and every competing nearest zero inside $B_{2cR}(y)$. These are charts of $\Sigma_0$ and may extend beyond $\Sigma$.

\noindent\emph{Part (\ref{item:ww-normal-intervals}).}
The injectivity of the normal map $F$ on some neighborhood (in $\Sigma$) of $y$, with normal range \eqref{eq:ww-normal-range}, and the Jacobian bound, are simple consequences of the curvature bounds on clean balls (i.e. \eqref{eq:ww-graph-bounds}). We also have bounded projection derivatives there, see \cite[Lemma~3.1, (3.6), and Lemma~3.5]{WW2019}.

\smallskip
\noindent\emph{Part (\ref{item:ww-endpoints}).}
The bound \eqref{eq:ww-truncation-distance} follows just from the $1$-Lipschitz property of $\cD$. Regarding \eqref{eq:ww-endpoint-bounds}, note that by \eqref{eq:ww-sheet-separation} an equidistant endpoint satisfies $|b_\pm(y)|\geq(\log \cD(y))/\sqrt2-C\geq(\log \cD(y))/2$ for $L_0$ sufficiently large. If $b_\pm(y)$ is a ``truncated'' endpoint instead, i.e. $|b_\pm(y)|=T(y)=3\log \cD(y)$, the bounds on $|b_\pm|$ in \eqref{eq:ww-endpoint-bounds} also hold.

In the normal range \eqref{eq:ww-normal-range}, the starting sheet has unique normal projection. Before the first equidistant endpoint, the upper ray stays between this sheet and its upper neighbor, if present. A segment from this region to any other sheet crosses a closer sheet, so only the adjacent upper sheet can compete. Finally, to prove the derivative bounds in \eqref{eq:ww-endpoint-bounds}, at an upper endpoint let $r$ be the distance to the next sheet, and orient its normal $\nu_+$ in the direction of $\nu(y)$. Near parallelism from \cite[Lemma~3.4]{WW2019} gives
\begin{equation*}
 \partial_t(t-r(F(y,t)))=1+\nu_+\cdot\nu(y)\geq1
 \quad\text{if}\quad t=b_+(y)<T(y).
\end{equation*}
Since $|\nabla r|=1$ and $|DF|\leq C$, the implicit function theorem gives the Lipschitz bound for this endpoint. Taking the minimum with $T$ preserves the bound by \eqref{eq:ww-truncation-distance}. The same argument using the lower neighboring sheet gives the assertion for $b_-$. The nearest-point region along a normal ray is an interval: for each competing zero $q$, the difference $|F(y,t)-q|^2-t^2=|y-q|^2+2t\nu(y)\cdot(y-q)$ is affine in $t$. This proves the asserted description of \eqref{eq:ww-normal-interval}. Their images are disjoint by the uniqueness condition in \eqref{eq:ww-endpoints-definition}.

\smallskip
\noindent\emph{Part (\ref{item:ww-profiles}).}
Let $G_0$ be the approximation of $u$ in \eqref{eq:ww-profile-inputs-proof}, with error $C\cD(y)^{-2}$ in $C^2$. After subtracting their limiting phase values, the profiles from the other sheets, together with their first two derivatives, are bounded in sum by $C\cD(y)^{-2}e^{\sqrt2|t|}$, by \eqref{eq:ww-sheet-separation} and \cite[Lemma~3.6]{WW2019}. This proves \eqref{eq:ww-core-profile}.

Finally, to prove \eqref{eq:ww-endpoint-angle}, consider an upper endpoint reached by equidistance. Set $P=F(y,b_+(y))$, $\ell=b_+(y)\leq3\log \cD(y)$, $Q=y$, and $\nu=\nu(y)$. Let $Q_+$ be the nearest point on the upper neighboring sheet, with normal $\nu_+$ oriented as in the statement. Then
\[
 P=Q+\ell\nu=Q_+-\ell\nu_+.
\]
Set $e=(\nu+\nu_+)/|\nu+\nu_+|$. By near parallelism \cite[Lemma~3.4]{WW2019}, the starting and upper sheets are graphs $f$ and $f_+$ over $e^\perp$ on a disk of radius a fixed small multiple of $\cD(y)$, contained in the larger charts above. Since $Q_+-Q$ is parallel to $e$, their common horizontal coordinate is taken as zero. On this disk,   \eqref{eq:ww-sheet-separation} gives, for a universal $C_0$,
\begin{equation}\label{eq:ww-positive-separation-proof}
 f_+-f-\sqrt2\log \cD(y)+C_0+1\geq1.
\end{equation}
Subtracting the graph mean-curvature equations and using \eqref{eq:ww-graph-bounds} gives
\begin{equation*}
 \operatorname{div}\bigl(a\nabla(f_+-f)\bigr)=H_+-H,\qquad
 |\nabla a|\leq C\cD(y)^{-1},\qquad |H_+-H|\leq C\cD(y)^{-2},
\end{equation*}
where $a$ is uniformly elliptic and $H,H_+$ are the respective mean curvatures with normals having positive $e$-component. Apply the rescaled Harnack and interior gradient estimates as in \cite[proof of Lemma~10.6]{WW2019} to the positive function in \eqref{eq:ww-positive-separation-proof}. Its lower bound by one absorbs the inhomogeneous term and gives
\begin{equation*}
 |\nabla(f_+-f)(0)|\leq C\cD(y)^{-1}
        \bigl(f_+(0)-f(0)-\sqrt2\log \cD(y)+C_0+1\bigr).
\end{equation*}
Since $\nu\cdot e=\nu_+\cdot e$, the bisector coordinates give
\begin{equation*}
 \begin{split}
 f_+(0)-f(0)&=2\ell(\nu\cdot e)\leq2\ell,\\
 |\nu-\nu_+|&=(\nu\cdot e)|\nabla(f_+-f)(0)|
 \leq C\cD(y)^{-1}(2\ell-\sqrt2\log \cD(y)+C_0+1).
 \end{split}
\end{equation*}
This proves \eqref{eq:ww-endpoint-angle} at the upper endpoint. The lower endpoint follows by the same argument using the lower neighboring graph $f_-$, with positive gap $f-f_-$ and mean-curvature difference $H-H_-$.
\end{proof}

\subsection{Tangential and normal estimates}
\label{subsec:layerwise-stability}
We use the normal coordinates and endpoints of Definition~\ref{defn:normal-coordinates-endpoints}.

We first record the equation satisfied by the normal derivative.

\begin{lem}[Differentiated equation in $\partial_t$]
\label{lem:normal-differentiated-equation}
Let $\Gamma$ be a smooth oriented surface with unit normal $\nu$. Wherever $F(y,t)=y+t\nu(y)$ defines smooth normal coordinates, a solution of \eqref{eq:allen-cahn} satisfies
\begin{equation}\label{eq:normal-differentiated-equation}
 \begin{aligned}
 (-\Delta+W''(u))\partial_tu
 &=-|\mathrm{II}_{\Gamma^t}|^2\partial_tu
   +2\mathrm{II}_{\Gamma^t}:\nabla_{\Gamma^t}^2u
   +\nabla_{\Gamma^t}H_{\Gamma^t}\cdot\nabla_{\Gamma^t}u.
 \end{aligned}
\end{equation}
Here $\Gamma^t=F(\Gamma,t)$ is oriented by the normal $\nu(F(y,t))=\nu(y)$, and we use the conventions $\mathrm{II}_{\Gamma^t}=-D\nu|_{T\Gamma^t}$ and $H_{\Gamma^t}=\operatorname{tr}\mathrm{II}_{\Gamma^t}$. The Hessian $\nabla_{\Gamma^t}^2u$ is intrinsic.
\end{lem}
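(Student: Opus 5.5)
The plan is to differentiate \eqref{eq:allen-cahn} in $t$ and compute the commutator $[\partial_t,\Delta]$ explicitly in the normal coordinates $(y,t)$. The approach is local: fix a chart where $F$ is a diffeomorphism, and regard $\Delta_{\Gamma^t}$, $\nabla_{\Gamma^t}$, $\mathrm{II}_{\Gamma^t}$, $H_{\Gamma^t}$ as $t$-dependent objects on the fixed parameter surface $\Gamma$, via the pulled-back metrics $g(t)$.

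\emph{Step 1: the Laplacian in Fermi coordinates.} Since $\nu(F(y,t))=\nu(y)$, the curves $t\mapsto F(y,t)$ are unit-speed geodesics orthogonal to every $\Gamma^t$. The metric therefore splits as $dt^2+g(t)$, and
\[
 \Delta u=\partial_t^2u+(\operatorname{div}\nu)\,\partial_tu+\Delta_{\Gamma^t}u
 =\partial_t^2u-H_{\Gamma^t}\partial_tu+\Delta_{\Gamma^t}u .
\]
The second equality uses the convention $\mathrm{II}=-D\nu$, which gives $\operatorname{div}\nu=-H$.

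\emph{Step 2: evolution identities along the parallel family.} I will use three standard identities, which I will check on round spheres to fix signs.
\begin{itemize}
\item The metric evolves by $\partial_tg_{ij}=-2\mathrm{II}_{ij}$.
\item The second fundamental form satisfies the Riccati equation $\partial_t\mathrm{II}=\mathrm{II}^2$, hence $\partial_tH_{\Gamma^t}=|\mathrm{II}_{\Gamma^t}|^2$. For a sphere of radius $r$ with outward normal, $H=-2/(r+t)$ and $\partial_tH=2/(r+t)^2$.
\item The first variation of the Laplace--Beltrami operator under $\partial_tg=h$ is
\[
 \partial_t(\Delta_{g(t)}f)-\Delta_{g(t)}\partial_tf=-h:\nabla^2f-\bigl(\operatorname{div}h-\tfrac12\nabla\operatorname{tr}h\bigr)\cdot\nabla f .
\]
With $h=-2\mathrm{II}$ and the Codazzi equation in flat $\mathbb R^3$ ($\operatorname{div}\mathrm{II}=\nabla H$), this becomes $2\mathrm{II}:\nabla^2_{\Gamma^t}f+\nabla_{\Gamma^t}H\cdot\nabla_{\Gamma^t}f$.
\end{itemize}

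\emph{Step 3: assemble.} Apply $\partial_t$ to the formula of Step 1 and insert the identities of Step 2:
\[
 \partial_t(\Delta u)=\Delta(\partial_tu)-|\mathrm{II}_{\Gamma^t}|^2\partial_tu+2\mathrm{II}_{\Gamma^t}:\nabla^2_{\Gamma^t}u+\nabla_{\Gamma^t}H_{\Gamma^t}\cdot\nabla_{\Gamma^t}u .
\]
Here $\Delta(\partial_tu)$ is read off from Step 1 applied to the function $\partial_tu$. Since $\partial_t(\Delta u)=\partial_t W'(u)=W''(u)\partial_tu$, rearranging gives \eqref{eq:normal-differentiated-equation}.

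The only delicate point is sign bookkeeping: the orientation of $\nu$ together with $\mathrm{II}=-D\nu$ determines the signs of $\partial_tg$, of the Riccati term, and of $\operatorname{div}\nu$. I will verify each sign on concentric spheres, where all quantities are explicit. I will also verify the cylinder case, where $\nabla H$ vanishes but $\mathrm{II}:\nabla^2u$ does not. This confirms the coefficient $2$ and the Codazzi contribution.
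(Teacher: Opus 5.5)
Your proposal is correct and is essentially the paper's proof: the Fermi-coordinate Laplacian $\Delta=\partial_{tt}-H_{\Gamma^t}\partial_t+\Delta_{\Gamma^t}$, then the identities $\partial_tH_{\Gamma^t}=|\mathrm{II}_{\Gamma^t}|^2$ and $[\partial_t,\Delta_{\Gamma^t}]u=2\mathrm{II}_{\Gamma^t}:\nabla_{\Gamma^t}^2u+\nabla_{\Gamma^t}H_{\Gamma^t}\cdot\nabla_{\Gamma^t}u$, and finally differentiating $\Delta u=W'(u)$ in $t$. The only difference is that you derive the commutator from the first variation of the Laplace--Beltrami operator (with $\partial_tg=-2\mathrm{II}$) and Codazzi, whereas the paper cites Chodosh--Mantoulidis; note also that $\partial_t\mathrm{II}=\mathrm{II}^2$ is the Riccati equation for the shape operator, and for the $(0,2)$-tensor it reads $\partial_t\mathrm{II}_{ij}=-(\mathrm{II}^2)_{ij}$, though both forms give $\partial_tH=|\mathrm{II}|^2$.
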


\begin{proof}
The Laplacian in Fermi coordinates is
\[
 \Delta=\partial_{tt}-H_{\Gamma^t}\partial_t+\Delta_{\Gamma^t}.
\]
Moreover, the variation formulas in \cite[Appendix~A, (A.3) and (A.7)]{CM2020}, applied in $\mathbb R^3$ (and with our sign convention) give
\[
 \begin{aligned}
  \partial_tH_{\Gamma^t}&=|\mathrm{II}_{\Gamma^t}|^2,\qquad 
  [\partial_t,\Delta_{\Gamma^t}]u
  =2\mathrm{II}_{\Gamma^t}:\nabla_{\Gamma^t}^2u
    +\nabla_{\Gamma^t}H_{\Gamma^t}\cdot\nabla_{\Gamma^t}u.
 \end{aligned}
\]
Differentiating $\Delta u =W'(u)$ in $t$ then yields  \eqref{eq:normal-differentiated-equation}.
\end{proof}

We next establish tangential derivative and decay estimates throughout the domain $\mathcal N$.

\begin{lem}[Tangential derivative and decay estimates]\label{lem:layerwise-stability}
In the setting fixed at the beginning of this section, after increasing $L_0$ if necessary, the following estimates hold for every $(y,t)\in\mathcal N$ for some universal $C$:
\begin{align}
 |\nabla_{\Gamma^t}u|+|\nabla_{\Gamma^t}^2u|
 &\leq C\cD(y)^{-2},\label{eq:normal-tangential-bounds}\\
 |\nabla_{\Gamma^t}H_{\Gamma^t}|
 &\leq C\cD(y)^{-4/3},\label{eq:normal-mean-curvature-gradient}\\
 |\nabla u|+|D^2u|
 &\leq C\bigl(e^{-\sqrt2|t|}+\cD(y)^{-2}\bigr).
 \label{eq:normal-cell-derivative-tail}
\end{align}
\end{lem}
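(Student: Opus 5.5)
Throughout, we fix $(y,t)\in\mathcal N$, set $R=\cD(y)/4$, and work in the clean ball $B_R(y)$. There Theorem~\ref{lem:wang-wei-profiles} and the multi-layer approximation $\|u-G_0\|_{C^2(B_{cR}(y))}\leq CR^{-2}$ from \eqref{eq:ww-profile-inputs-proof} are available. Since $|t|\leq 3\log\cD(y)+1$, the point $F(y,t)$ lies well inside $B_{cR}(y)$ once $L_0$ is large. All three estimates will come from computing the relevant quantities on the explicit profile $G_0=\sum_j\pm g(d_j)$ and then paying the $C^2$ error $CR^{-2}\sim C\cD(y)^{-2}$.

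\emph{Step 1: estimate \eqref{eq:normal-cell-derivative-tail}.} Let $q_j$ be the nearest point of the $j$-th sheet to $F(y,t)$. For the starting sheet, $|d_1(F(y,t))|=|t|$. Every other sheet is at distance at least $|t|$ from $F(y,t)$, because the ray has not passed its equidistance endpoint, up to the extra unit in $\mathcal N$. Sheet separation \eqref{eq:ww-sheet-separation} also bounds the number of sheets within distance $k+|t|$ by $C(1+k/\log R)$. Hence $\sum_j\bigl(|g'(d_j)|+|g''(d_j)|\bigr)|\nabla d_j|^{\le2}$ sums to $Ce^{-\sqrt2|t|}$. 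The factors $|D^2 d_j|\le C\cD^{-1}$ only add lower-order terms. Adding the $C\cD^{-2}$ approximation error gives \eqref{eq:normal-cell-derivative-tail}.

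\emph{Step 2: tangential bounds \eqref{eq:normal-tangential-bounds}.} For the leading term we use that $\nabla_{\Gamma^t}d_1=0$, since $\Gamma^t$ is a level set of $d_1$. So $g(d_1)$ has zero tangential gradient, and its intrinsic Hessian is $g'(d_1)\nabla^2_{\Gamma^t}d_1=0$ as well; more precisely, the ambient Hessian restricted to $T\Gamma^t$ equals $g'\,D^2d_1$, which is cancelled by the second fundamental form term. For $j\ne1$, $\nabla_{\Gamma^t}d_j$ is controlled by the angle between $\nu(y)$ and the normal of sheet $j$ at $q_j$. The Harnack/gradient argument giving \eqref{eq:ww-endpoint-angle} bounds this angle by $C\cD^{-1}(\operatorname{dist}-\sqrt2\log\cD+C)$. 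Weighting by $g'(d_j)\lesssim e^{-\sqrt2 d_j}$ and using $d_j\gtrsim \sqrt2\log\cD/2$ near the equidistance point already gives $\cD^{-1}\cdot\cD^{-1}\cdot\log$, which is not quite $\cD^{-2}$. So I instead plan to rely directly on the Wang--Wei $C^2$ approximation: in \cite{WW2019} the tangential derivatives of $u$ along the Fermi leaves of the nearest sheet are $O(R^{-2})$, since the sheets are $\sqrt2\log R$ apart and $e^{-\sqrt2\cdot\sqrt2\log R/\sqrt2}\cdot$angle is $O(R^{-2})$ after combining the interaction size $R^{-2}$ with bounded angles. The main obstacle is exactly this bookkeeping near the endpoints, where both sheets contribute comparably. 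There I would use symmetry: at equidistance, sheet $j$'s profile has $g'(d_j)\sim R^{-1}$ only when $|t|\sim\log R/\sqrt2$, the angle is $O(R^{-1})$, and the product is $O(R^{-2})$. The logarithmic loss must be absorbed by the sharper $e^{-\sqrt2 d_j}$ decay, using $d_j\ge \sqrt2\log\cD-C-|t|$ together with $d_j\ge|t|$.

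\emph{Step 3: \eqref{eq:normal-mean-curvature-gradient}.} Write $H_{\Gamma^t}(F(y,t))=\sum_i \kappa_i/(1-t\kappa_i)$ with $|\kappa_i|\le C\cD^{-1}$. Differentiating tangentially gives $\nabla H_\Gamma+O(|t|\,|\mathrm{II}|\,|\nabla \mathrm{II}|)$. The error term is $O(\log\cD\cdot\cD^{-1}\cdot\cD^{-2})$ if $|\nabla\mathrm{II}|\le C\cD^{-2}$, which follows from the Lipschitz bound $R\|\mathrm{II}\|_{C^{0,1}}\le C$ in \eqref{eq:ww-graph-bounds}. For $\nabla_\Sigma H$ itself we interpolate between $|H|\le C\cD^{-2}$ (with $C^{0,1/2}$ seminorm $\le CR^{-2}$) and $|\nabla^2 H|\le |\nabla^3\mathrm{II}|\le C\cD^{-1}$, the latter from higher Schauder bounds on the graph equation in the clean ball with unit-scale regularity. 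Gagliardo--Nirenberg on balls of radius $\cD^{-1/3}$ then gives $|\nabla H|\lesssim \cD^{-2}/\cD^{-1/3}+\cD^{-1}\cD^{-1/3}=O(\cD^{-4/3})$, matching the exponent $4/3$.
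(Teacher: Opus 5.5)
\textbf{Verdict: genuine gap.} Your Step~1 is fine and is essentially the paper's argument: every other sheet is at distance at least $|t|-2$, and you sum the profile tails using separation. Your treatment of the starting sheet in Step~2 is also correct. The gap is the neighbouring-sheet contribution in Step~2, which is the heart of \eqref{eq:normal-tangential-bounds}.

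You correctly see that $g'(d_j)\le C\cD^{-1}$ times an angle bound loses a factor $\log\cD$, but none of your remedies removes it.
\begin{itemize}
\item The appeal to \cite{WW2019} for $O(R^{-2})$ tangential derivatives along Fermi leaves is essentially the statement you are proving. Your heuristic for it (interaction size times bounded angles) does not produce $R^{-2}$.
\item The claim that the angle is $O(\cD^{-1})$ at equidistance holds only when the local gap exceeds $\sqrt2\log\cD$ by $O(1)$. In general \eqref{eq:ww-endpoint-angle} only gives $C\cD^{-1}(2b_+-\sqrt2\log\cD+C)$, which can be of order $\cD^{-1}\log\cD$.
\item The inequalities you invoke, $d_j\ge\sqrt2\log\cD-C-|t|$ and $d_j\ge|t|$, only yield $e^{-\sqrt2 d_j}\le C\cD^{-1}$. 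That is exactly the estimate that produced the loss.
\end{itemize}

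The missing idea is that the angle and the exponential decay are governed by the \emph{same} excess gap. At an equidistant endpoint put $x=2b_+(y)-\sqrt2\log\cD(y)+C$, which is nonnegative for a suitable universal $C$ by \eqref{eq:ww-sheet-separation}. Then $e^{-\sqrt2 b_+(y)}\le C\cD(y)^{-1}e^{-x/\sqrt2}$, the angle is at most $C\cD(y)^{-1}x$, and $xe^{-x/\sqrt2}\le C$.

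At interior points $t=b_+(y)-s$ you also need two facts. The neighbouring normal at the nearest point moves by at most $Cs\cD(y)^{-1}$, while $r\ge b_+(y)+s/2$. The factor $e^{-s/\sqrt2}$ then absorbs the growth in $s$; this is \eqref{eq:normal-neighbor-derivative}.

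Two cases are also not addressed. In the truncated case $b_+(y)=T(y)$, the paper uses near-parallelism to get $\partial_t r\le0$ along the ray. Hence $r\ge T(y)$ on the whole segment, and the neighbour's tail is $O(\cD^{-3\sqrt2})$. The exterior unit strips $b_+\le t<b_++1$ are also not treated.

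In Step~3 the interpolation idea is right, but the input $|\nabla^2H|\le C\cD^{-1}$ is unsupported.
\begin{itemize}
\item $H$ is only known to be small in $C^{0,1/2}$, so Schauder theory for the graph equation does not give small second derivatives of $\mathrm{II}$. Note also that $\nabla^2H=\operatorname{tr}\nabla^2\mathrm{II}$, not $\nabla^3\mathrm{II}$.
\item What is available is the unit-scale bound $|\nabla^2H|\le C$. It comes from $|\nabla u|\ge c$ at zeros, interior regularity and the implicit function theorem.
\item Combined with only the sup bound $|H|\le C\cD^{-2}$, as in your computation, this gives $\cD^{-2}/h+h$, whose optimum is $\cD^{-1}$. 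That is not enough.
\end{itemize}
You must instead use the H\"older bound you listed: $|\nabla H|\le C\cD^{-2}h^{-1/2}+Ch$ with $h=\cD^{-4/3}$, which is the paper's choice.

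Finally, the paper reads \eqref{eq:ww-graph-bounds} as $|\nabla\mathrm{II}|\le C\cD^{-1}$ in fixed-size patches, not $C\cD^{-2}$. This is harmless, since the parallel-surface error is then $C\cD^{-2}\log\cD\ll\cD^{-4/3}$.
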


\begin{proof}
Fix $y\in\Sigma$ and work in a fixed unit neighborhood of the segment $\{F(y,t):0\leq t\leq b_+(y)+1\}$ inside an enlarged chart. Let $r$ be the distance to the upper neighboring sheet, when it is present among the local graphs. Theorem~\ref{lem:wang-wei-profiles} ensures that $r$ is smooth throughout this neighborhood; write $\nu_+=-\nabla r$ for its normal at the nearest point. The profile approximation (which is a consequence of \eqref{eq:ww-profile-inputs-proof}, see below)  we use is
\begin{equation}\label{eq:normal-two-profiles}
 \sum_{k=0}^2\bigl|D^k\bigl(u-g(t)-g(r)+1\bigr)\bigr|\le C\cD(y)^{-2}, \quad
 \nabla r=-\nu_+,
 \quad \partial_t r=-\nu(y)\cdot\nu_+,\quad \mbox{ at } F(y,t).
\end{equation}
Here the derivatives in the sum are ambient Euclidean derivatives at $F(y,t)$, and $t$ denotes the local signed-distance function to the starting sheet. If no upper neighboring sheet is present, the term $g(r)-1$ and its derivatives are omitted. Any sheet entering or leaving the chart is at distance at least $c\cD(y)$ from the point $F(y,t)$ under consideration. Its profile tail and its first two derivatives are therefore exponentially small and can be absorbed into the $C\cD(y)^{-2}$ error.

To obtain \eqref{eq:normal-two-profiles}, apply \eqref{eq:ww-profile-inputs-proof} with $R=\cD(y)/4$. This bounds $u-G_0$ by $C\cD(y)^{-2}$ in $C^2$, so it remains to estimate the contribution of the omitted profiles. For this, observe that a segment inside the chart from the region between the retained sheets to the $k$-th further sheet crosses $k$ consecutive gaps. Thus, by \eqref{eq:ww-sheet-separation}, its length is at least $k(\sqrt2\log R-C)$. Enlarging the region by a fixed unit neighborhood reduces this lower bound by at most a universal constant. The exponential decay of the profiles and their derivatives therefore bounds the sum of the omitted tails, together with their first two derivatives, by
\[
 C\sum_{k\geq1}e^{-\sqrt2 k(\sqrt2\log R-C)}
 +CQe^{-cR}\leq CR^{-2}.
\]
(Here we used the bounded derivatives of the signed distances and $Q\leq CR$; the last term accounts for sheets at distance at least $cR$.) If no upper neighbor is present, only the lower-side sum occurs. The signs in \eqref{eq:normal-two-profiles} follow because $u>0$ between these two sheets, whose increasing-$u$ normals are oppositely oriented.

If no upper neighbor is present, \eqref{eq:normal-two-profiles} immediately gives \eqref{eq:normal-tangential-bounds}, since $g(t)$ is constant on $\Gamma^t$. We may therefore assume that an upper neighbor is present. Suppose first that $b_+(y)$ is an equidistant endpoint in \eqref{eq:ww-endpoints-definition}. The separation estimate in Theorem~\ref{lem:wang-wei-profiles} gives
\begin{equation}\label{eq:normal-endpoint-angle}
 e^{-\sqrt2b_+(y)}
 \bigl(2b_+(y)-\sqrt2\log \cD(y)+C\bigr)
 \leq C\cD(y)^{-1}.
\end{equation}
At $t=b_+(y)-s$, $0\leq s\leq b_+(y)$, the normal $\nu_+$ to the neighboring sheet, evaluated at the nearest point to $F(y,t)$, differs from its value at $t=b_+(y)$ by at most $Cs\cD(y)^{-1}$, while $r\geq b_+(y)+s/2$. Combining the normal comparison \eqref{eq:ww-endpoint-angle} with \eqref{eq:normal-endpoint-angle}, we obtain
\begin{equation}\label{eq:normal-neighbor-derivative}
 \begin{aligned}
 (g'(r)+|g''(r)|)|\nabla_{\Gamma^t}r|
 &\leq C\cD(y)^{-1}e^{-\sqrt2b_+(y)-s/\sqrt2}
       \bigl(2b_+(y)-\sqrt2\log \cD(y)+C+s\bigr) \leq C\cD(y)^{-2}.
 \end{aligned}
\end{equation}
Estimate \eqref{eq:normal-neighbor-derivative} also holds for $b_+(y)\leq t<b_+(y)+1$: the distance $r$ changes by at most $1$, and the neighboring normal $\nu_+=-\nabla r$ changes by at most $C\cD(y)^{-1}$, by \eqref{eq:ww-graph-bounds}. Moreover $g'(r)\leq C\cD(y)^{-1}$ and $|\nabla_{\Gamma^t}^2r|\leq C\cD(y)^{-1}$, by \eqref{eq:ww-graph-bounds} for the two sheets. By the chain rule
\begin{equation}\label{eq:normal-distance-profile-hessian}
 \nabla_{\Gamma^t}^2g(r)
 =g''(r)\nabla_{\Gamma^t}r\otimes\nabla_{\Gamma^t}r
       +g'(r)\nabla_{\Gamma^t}^2r,
\end{equation}
so, since $g(t)$ is constant on $\Gamma^t$, \eqref{eq:normal-two-profiles}, \eqref{eq:normal-neighbor-derivative}, and \eqref{eq:normal-distance-profile-hessian} imply \eqref{eq:normal-tangential-bounds} on this half of the interval. If instead $b_+(y)=T(y)$ is reached before equidistance, then $r(y,T(y))\geq T(y)$. If $r(y,t_0)<T(y)$ for some $t_0\in[0,T(y)]$, its $1$-Lipschitz property gives $r(y,t)\leq r(y,t_0)+|t-t_0|<2T(y)=6\log \cD(y)$ throughout $[0,T(y)]$. Then \cite[Lemma~3.4]{WW2019} and \eqref{eq:normal-two-profiles} give $\partial_t r\leq0$, so $r(y,T(y))\leq r(y,t_0)<T(y)$, a contradiction. Thus $r(y,t)\geq T(y)$ on this segment. Using $|\partial_t r|\leq1$ on the additional unit strip, the neighboring tail satisfies
\begin{equation*}
 \begin{split}
 r(y,t)&\geq T(y)-(t-T(y))_+\geq T(y)-1, \\
 \sum_{k=0}^2|D^k(g(r)-1)|&
       \leq Ce^{-\sqrt2 r(y,t)}
       \leq C\cD(y)^{-3\sqrt2}\leq C\cD(y)^{-2},
 \qquad 0\leq t<T(y)+1.
 \end{split}
\end{equation*}
(Here we used $|\nabla r|=1$, $|D^2r|\leq C\cD(y)^{-1}$, and $T(y)=3\log\cD(y)$.) This proves \eqref{eq:normal-tangential-bounds} also in the truncated case, including its exterior unit strip. The same argument using the lower neighboring sheet proves \eqref{eq:normal-tangential-bounds} for $t\leq0$. For $(y,t)\in\mathcal N$, every other sheet is at distance at least $|t|-2$. Summing the exponentially decaying first two profile derivatives proves \eqref{eq:normal-cell-derivative-tail}.

For \eqref{eq:normal-mean-curvature-gradient}, the bounds in \eqref{eq:ww-graph-bounds} give
\begin{equation}\label{eq:normal-mean-curvature-inputs}
 \|H_\Gamma\|_{C^{0,1/2}}\leq C\cD(y)^{-2},\qquad
 \|\mathrm{II}_\Gamma\|_{C^{0,1}}\leq C\cD(y)^{-1},
\end{equation}
with the norms taken in patches of fixed size. To estimate $DH_\Gamma$, we interpolate the H\"older bound for $H_\Gamma$ with a uniform bound for $D^2H_\Gamma$. To obtain the latter, note that \eqref{eq:ww-core-profile} gives $|\nabla u|\geq g'(0)-C\cD(y)^{-2}\geq c>0$ at every zero. Together with interior regularity for \eqref{eq:allen-cahn}, this gives uniform $C^4$ graph bounds by the implicit function theorem, and hence $|D^2H_\Gamma|\leq C$ in graph coordinates. Combining this bound with \eqref{eq:normal-mean-curvature-inputs} and applying Taylor's formula along a coordinate line, we obtain
\begin{equation}\label{eq:normal-mean-curvature-interpolation}
 |DH_\Gamma|\leq C\cD(y)^{-2}h^{-1/2}+Ch
 \leq C\cD(y)^{-4/3},\qquad h=\cD(y)^{-4/3}.
\end{equation}
The second fundamental form of the parallel surface is $ \mathrm{II}_{\Gamma^t} =\mathrm{II}_\Gamma(\mathrm{Id}-t\mathrm{II}_\Gamma)^{-1}. $ Differentiating its trace and using \eqref{eq:normal-mean-curvature-inputs} and \eqref{eq:normal-mean-curvature-interpolation} gives
\[
 \begin{aligned}
 |\nabla_{\Gamma^t}H_{\Gamma^t}|
 &\leq C|\nabla_\Gamma H_\Gamma|
       +C|t|\,|\mathrm{II}_\Gamma|
                       |\nabla_\Gamma\mathrm{II}_\Gamma| \leq C\cD(y)^{-4/3}+C\cD(y)^{-2}\log \cD(y)
 \leq C\cD(y)^{-4/3},
 \end{aligned}
\]
which proves \eqref{eq:normal-mean-curvature-gradient}.
\end{proof}

Finally, in the next lemma we compare $|\nabla u|$ and $\partial_t u$ across normal rays, and we show that $(\partial_t u)_+$ is small after the endpoints; in an idealized situation, see the figure below, we cross maxima/minima of $u$ and thus $\partial_t u$ changes sign.

\begin{figure}[ht]
\centering
\begin{tikzpicture}[x=1cm,y=0.85cm,>=stealth]
  \draw[thin,gray!30] (-4.5,1)--(4.5,1);
  \draw[thin,gray!30] (-4.5,-1)--(4.5,-1);

  \draw[->] (-4.65,0)--(4.85,0) node[right] {$t$};
  \draw[->] (0,-1.65)--(0,1.75) node[above] {$u$};
  \draw (-0.05,1)--(0.05,1);
  \draw (-0.05,-1)--(0.05,-1);
  \node[left,fill=white,inner sep=1pt] at (-0.08,1) {$1$};
  \node[left,fill=white,inner sep=1pt] at (-0.08,-1) {$-1$};

  \draw[dashed,gray] (-1.5,-1.35)--(-1.5,1.4);
  \draw[dashed,gray] (1.5,-1.35)--(1.5,1.4);
  \draw[thick]
    (-4.5,0.92)
    .. controls (-3.5,0.92) and (-3.0,-0.92) .. (-1.5,-0.92)
    .. controls (-0.5,-0.92) and (0.5,0.92) .. (1.5,0.92)
    .. controls (3.0,0.92) and (3.5,-0.92) .. (4.5,-0.92);
  \fill (-1.5,-0.92) circle (0.7pt);
  \fill (1.5,0.92) circle (0.7pt);

  \node[above left,fill=white,inner sep=1pt] at (-1.5,0) {$b_-$};
  \node[below right,fill=white,inner sep=1pt] at (1.5,0) {$b_+$};
  \node[below right] at (0,0) {$0$};
  \node at (-3,-1.75) {$\partial_tu<0$};
  \node at (0,-1.75) {$\partial_tu>0$};
  \node at (3,-1.75) {$\partial_tu<0$};
\end{tikzpicture}
\caption{\small Schematic periodic one-dimensional transition.}
\label{fig:normal-midpoint-sign}
\end{figure}

\begin{lem}[Comparison with the positive normal derivative]
\label{lem:normal-gradient-comparison}
In the setting fixed at the beginning of this section,
\begin{equation}\label{eq:normal-gradient-comparison}
 0\leq|\nabla u|-(\partial_tu)_+\leq C\cD(y)^{-2}
 \quad\text{for}\quad b_-(y)\leq t\leq b_+(y).
\end{equation}
We also have the following bound on the unit strips beyond the endpoints:
\begin{equation}\label{eq:normal-endpoint-positive}
 (\partial_tu)_+\leq C\cD(y)^{-2}
 \quad\text{if }\quad
 b_+(y)\leq t<b_+(y)+1
 \quad\text{or}\quad b_-(y)-1<t\leq b_-(y).
\end{equation}
\end{lem}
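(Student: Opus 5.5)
We will derive both estimates from the two-profile approximation \eqref{eq:normal-two-profiles} established in the proof of Lemma~\ref{lem:layerwise-stability}, together with the tangential bound \eqref{eq:normal-tangential-bounds}. The starting observation is that in Fermi coordinates $\nabla u = \partial_t u\,\nu(y) + \nabla_{\Gamma^t}u$ orthogonally, so
\[
 |\partial_tu|\leq|\nabla u|\leq|\partial_tu|+|\nabla_{\Gamma^t}u|\leq|\partial_tu|+C\cD(y)^{-2}
\]
throughout $\mathcal N$. The left inequality in \eqref{eq:normal-gradient-comparison} is then trivial, since $(\partial_tu)_+\leq|\partial_tu|\leq|\nabla u|$. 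For the right inequality it suffices to show that $(\partial_tu)_-\leq C\cD(y)^{-2}$ on $[b_-(y),b_+(y)]$. Indeed, on that interval $|\nabla u|-(\partial_tu)_+\leq(\partial_tu)_-+C\cD(y)^{-2}$. Thus both claims reduce to one-sided sign control of $\partial_tu$ up to errors of size $C\cD(y)^{-2}$: nonnegativity inside the normal interval and nonpositivity on the unit strips beyond it.

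We treat the upper half $t\geq0$; the lower half is symmetric. By \eqref{eq:normal-two-profiles},
\[
 \partial_tu = g'(t) + g'(r)\,\partial_tr + O(\cD(y)^{-2}) = g'(t) - (\nu(y)\cdot\nu_+)\,g'(r) + O(\cD(y)^{-2}),
\]
where the middle term is omitted if there is no upper neighbor. If there is no upper neighbor, or if the endpoint is truncated, then $r\geq T(y)-1$ on the whole segment, as shown in the proof of Lemma~\ref{lem:layerwise-stability}. Hence $g'(r)\leq C\cD(y)^{-3\sqrt2}$. In that case $\partial_tu\geq -C\cD(y)^{-2}$ on $[0,b_+]$. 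On $[b_+,b_++1)$ we have $t\geq T(y)=3\log\cD(y)$, so $g'(t)\leq C\cD(y)^{-3\sqrt2}$ and $(\partial_tu)_+\leq C\cD(y)^{-2}$.

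In the equidistant case, $\nu\cdot\nu_+=1-O(\cD^{-2}\cdot(\ldots)^2)$ by \eqref{eq:ww-endpoint-angle}. The main point is then to compare $g'(t)$ with $g'(r)$. For $0\leq t\leq b_+$ the point $F(y,t)$ is closer to the starting sheet, so $r\geq t$. Since $g'$ is decreasing on $[0,\infty)$, this gives $g'(t)\geq g'(r)$, and so
\[
 \partial_tu\geq (1-\nu\cdot\nu_+)(-g'(r)) - C\cD^{-2}\geq -C\cD(y)^{-2}.
\]
Strictly, we need $r\geq t$ along the whole segment, and not only that $y$ is the nearest point. This follows because $t-r(F(y,t))$ is nondecreasing in $t$ (its derivative is $1+\nu\cdot\nu_+\geq0$, as in the proof of Lemma~\ref{lem:ww-normal-profiles}) and vanishes at $t=b_+$.

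For $b_+\leq t<b_++1$ the same monotonicity gives $t\geq r$, hence $g'(t)\leq g'(r)$. Therefore
\[
 \partial_tu\leq g'(r)(1-\nu\cdot\nu_+)+C\cD^{-2}.
\]
Here $g'(r)\leq Ce^{-\sqrt2 b_+}$, and $1-\nu\cdot\nu_+\leq|\nu-\nu_+|^2$ is controlled by \eqref{eq:ww-endpoint-angle}. Combined with \eqref{eq:normal-endpoint-angle}, this gives $(\partial_tu)_+\leq C\cD(y)^{-2}$. The neighboring normal $\nu_+$ varies by $O(\cD^{-1})$ across the unit strip, which only contributes a harmless error.

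The main obstacle is bookkeeping the factor $1-\nu\cdot\nu_+$ against $g'(r)$. The cruder bound $|\nu-\nu_+|g'(r)$ is exactly what \eqref{eq:normal-endpoint-angle} and \eqref{eq:normal-neighbor-derivative} control at size $\cD^{-2}$, so even the linear bound suffices and no quadratic gain is needed.
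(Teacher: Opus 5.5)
Your proposal is correct and follows the paper's proof essentially step by step. You use the orthogonal splitting $\nabla u=\partial_tu\,\nu(y)+\nabla_{\Gamma^t}u$ together with \eqref{eq:normal-tangential-bounds} to reduce everything to one-sided sign control of $\partial_tu$. That control then comes from \eqref{eq:normal-two-profiles}: monotonicity of $g'$ with $r\geq t$ inside and $r\leq t$ beyond an equidistant endpoint, with \eqref{eq:normal-neighbor-derivative} handling the latter, and the $e^{-\sqrt2T(y)}$ tail handling the truncated and no-neighbor cases.

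Two harmless points. On $[0,b_+]$ the correct lower bound is $\partial_tu\geq(1-\nu\cdot\nu_+)g'(r)-C\cD(y)^{-2}$, with a plus sign, so no angle control is needed there. In the no-neighbor case you should say explicitly, as the paper does, that $b_+(y)=T(y)$, since any competing zero would lie in the local graphs.
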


\begin{proof}
Use the profile representation \eqref{eq:normal-two-profiles} from the proof of Lemma~\ref{lem:layerwise-stability}. If no upper neighbor is present in the local chart, it gives $\nabla u=g'(t)\nu(y)+O(\cD(y)^{-2})$. Any zero competing before truncation lies within distance $2T(y)$ of $y$, hence belongs to the graphs in \eqref{eq:ww-graph-bounds}. Since there is no upper neighboring graph, $b_+(y)=T(y)$. Thus $g'(t)\leq C\cD(y)^{-3\sqrt2}$ on the exterior unit strip, and both conclusions follow on the upper half.

We may therefore assume that an upper neighbor is present. On $0\leq t\leq b_+(y)$, we have $r\geq t$ and $g'$ decreases on $[0,\infty)$, so
\begin{equation}\label{eq:normal-derivative-inside}
 \partial_tu=g'(t)-g'(r)\nu(y)\cdot\nu_+
       +O(\cD(y)^{-2})\geq-C\cD(y)^{-2}.
\end{equation}
Combining \eqref{eq:normal-derivative-inside} with \eqref{eq:normal-tangential-bounds} and $|\nabla u|\leq|\partial_tu|+|\nabla_{\Gamma^t}u|$ proves \eqref{eq:normal-gradient-comparison} on the upper half. Beyond an equidistant endpoint, on $b_+(y)\leq t<b_+(y)+1$, we instead have $r\leq t$, and hence
\begin{equation*}
 \partial_tu
 \leq g'(r)(1-\nu(y)\cdot\nu_+)+C\cD(y)^{-2}
 \leq C\cD(y)^{-2},
\end{equation*}
by \eqref{eq:normal-neighbor-derivative} and near parallelism. At a truncated endpoint, the derivatives of the profiles of the starting and neighboring sheets are bounded by $Ce^{-\sqrt2T(y)}\leq C\cD(y)^{-2}$, including on the exterior unit strip. The same argument using the lower neighboring sheet gives the bounds for $t\leq0$, completing \eqref{eq:normal-gradient-comparison} and \eqref{eq:normal-endpoint-positive}.
\end{proof}

\subsection{The stability inequality}

We use the coordinates of Definition~\ref{defn:normal-coordinates-endpoints} and the estimates above. We first record two crude area bounds.
\begin{lem}[Local and cubic area bounds]\label{lem:area-bounds}
In the setting of Definition~\ref{defn:bounded-cluster}, there is a universal constant $C>0$ such that
\begin{equation}\label{eq:clean-local-area}
 \mathcal H^2(\Sigma_0\cap B_1(x))\leq C 
 \quad\text{if }\operatorname{dist}(x,\mathcal X)\geq2,
\end{equation}
and
\begin{equation}\label{eq:cluster-cubic-area}
 \mathcal H^2\bigl(\Sigma\cap\{\cD<t\}\bigr)
 \leq CNt^3\quad\text{for all}\quad t\geq L_0.
\end{equation}
\end{lem}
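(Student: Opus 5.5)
For the local bound \eqref{eq:clean-local-area}, I would use the structure of $\Sigma_0$ near points outside $\mathcal X$. If $\operatorname{dist}(x,\mathcal X)\geq2$, then every $y\in\Sigma_0\cap B_1(x)$ lies outside $\mathcal X$. So by \eqref{eq:clean-gradient-bound} we have $|\nabla u|\geq c_\circ$ there, and $\Sigma_0\cap B_{1/2}(y)$ is a graph with uniformly bounded slope.

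A graph of bounded slope over a disk of radius $1/2$ has area at most $C$ inside $B_{1/2}(y)$. The remaining task is to bound the number of such sheets crossing a unit ball. I would do this with the $C^2$ heteroclinic approximation from Lemma~\ref{lem:small-curvature-heteroclinic}, applied at $y$ with $\varepsilon\leq1/100$. In $B_2(y)$, $u$ is $1/100$-close in $C^2$ to a single profile $g(e\cdot x-t_0)$, which vanishes only on one plane and has derivative at least $c$ in the $e$ direction in a slab around it. Hence $\Sigma_0\cap B_{1}(y)$ consists of a single graph over $e^\perp$: by monotonicity in $e$ within the slab, and because $|u|$ is bounded away from zero outside the slab, there are no other zeros.

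Now cover $B_1(x)$ by $C$ balls of radius $1/2$ centered at points of $\Sigma_0$ (a maximal $1/2$-separated subset of $\Sigma_0\cap B_1(x)$ has universally bounded cardinality). Each contributes area at most $C$, which proves \eqref{eq:clean-local-area}.

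For the cubic bound \eqref{eq:cluster-cubic-area}, I would use \eqref{eq:cluster-cover-mass}: $\{\cD<t\}\subset\bigcup_{z\in\mathcal Z_\Lambda}B_{t+1}(z)$, a union of $N$ balls of radius $t+1\leq2t$. Each such ball is covered by at most $Ct^3$ unit balls $B_1(x)$. I would apply \eqref{eq:clean-local-area} to those with $\operatorname{dist}(x,\mathcal X)\geq2$. Unit balls meeting $\Sigma$ but with $\operatorname{dist}(x,\mathcal X)<2$ need separate handling: since points of $\Sigma$ satisfy $\cD>L_0$, and by \eqref{eq:cluster-full-distance} $\operatorname{dist}=\cD$ there when $\cD<\Lambda/3$, such balls have centers with $\cD\geq L_0-1\geq2$ (for $L_0\geq3$). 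So it suffices to recenter: for each unit ball meeting $\Sigma$, replace it by a ball $B_1(x')$ with $x'\in\Sigma$ that covers its portion of $\Sigma$, up to a universal number of such balls. Here $\operatorname{dist}(x',\mathcal X)\geq L_0\geq2$.

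Summing gives $\mathcal H^2(\Sigma\cap\{\cD<t\})\leq CN t^3$. The only delicate point is the single-sheet count in the local bound, which the $C^2$-closeness handles. No Wang--Wei input is needed, since the crude cubic volume count suffices.
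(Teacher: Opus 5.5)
Your proposal is correct and is essentially the paper's argument. The local bound follows from the single-graph structure of $\Sigma_0$ in half-unit balls around zeros outside $\mathcal X$ plus a bounded covering. The cubic bound follows from the $N$ balls of radius $t+1$ in \eqref{eq:cluster-cover-mass}, a count of $O(Nt^3)$ unit balls, and centers on $\Sigma$, where \eqref{eq:cluster-full-distance} gives distance $>L_0\geq2$ from $\mathcal X$. The paper takes a maximal $1$-separated family in $\Sigma\cap\{\cD<t\}$ directly rather than recentering. Your ``separate handling'' case is in fact vacuous for $L_0\geq3$, since $\operatorname{dist}(x,\mathcal X)\geq\operatorname{dist}(y,\mathcal X)-1>L_0-1$ for any $y\in\Sigma\cap B_1(x)$; the recentering step covers it in any case.
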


\begin{proof}
At each $y\in\Sigma_0\setminus\mathcal X$, $\Sigma_0\cap B_{1/2}(y)$ is a graph with uniformly bounded slope. This gives a uniform bound for the area in $B_{1/2}(y)$. If $\operatorname{dist}(x,\mathcal X)\geq2$, a finite cover gives \eqref{eq:clean-local-area}.

Fix $t\geq L_0$ and choose a maximal one-separated family in $\Sigma\cap\{\cD<t\}$. Its disjoint half-unit balls lie in $\mathcal S_{t+1/2}$, which \eqref{eq:cluster-cover-mass} covers by $N$ balls of radius $t+3/2$. Volume comparison gives at most $CNt^3$ centers. Their unit balls cover $\Sigma\cap\{\cD<t\}$, and each has zero-set area at most $C$ by \eqref{eq:clean-local-area}: \eqref{eq:cluster-full-distance} gives the centers distance $\cD>L_0\geq2$ from the full bad set $\mathcal X$. Summing these estimates proves \eqref{eq:cluster-cubic-area}.
\end{proof}

We reduce stability to a condition on $\Sigma$, while retaining the curvature contribution of the bad set (recall \eqref{eq:bad-set}).

\begin{prop}[A geometric form of stability]\label{prop:bad-ball-stability}
Consider the setting in Definition~\ref{defn:bounded-cluster}. After increasing $R_*$ if necessary, assume that $L_0\geq4R_*$ and $\Lambda\geq32L_0$. Let $\varphi:\Sigma_0\to[0,1]$ satisfy $\varphi|_\Sigma\in C^{0,1}(\Sigma)$ and
\[
 \varphi=1\quad\text{on }\Sigma_0\cap\mathcal S_{4L_0},\qquad
 \varphi=0\quad\text{on }\Sigma_0\cap\{\cD\geq\Lambda/4\}.
\]
Then, there is a universal $C$ such that
\begin{equation}\label{eq:bad-ball-stability-summable}
 \begin{aligned}
 \frac1{\sigma_0}\int_{\mathcal S_2}\mathcal A^2|\nabla u|^2
 +\int_{\Sigma\setminus\mathcal S_{4L_0}}|\mathrm{II}_\Sigma|^2\varphi^2
 &\leq\frac32\int_\Sigma|\nabla_\Sigma\varphi|^2 +C\int_{\Sigma\setminus\mathcal S_{2L_0}}\cD^{-13/4}\varphi^2
       +CNL_0^{-1/4}.
 \end{aligned}
\end{equation}
\end{prop}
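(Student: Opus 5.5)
We plan to test the ambient stability inequality with a test function adapted to the Fermi coordinates of Definition~\ref{defn:normal-coordinates-endpoints}. Concretely, we extend $\varphi$ off $\Sigma_0$ as follows. Inside $\mathcal S_{4L_0}$ (really on a neighborhood $\mathcal S_{2L_0}$ together with all of $\mathcal S_2$) we set $\xi=1$. On the region swept by the normal intervals \eqref{eq:ww-normal-interval} over $y\in\Sigma$, we set $\xi(F(y,t))=\varphi(y)\,\eta_y(t)$, where $\eta_y$ is a cutoff equal to one on $(b_-(y),b_+(y))$ and decaying linearly to zero on the unit strips $\mathcal N$ beyond the endpoints. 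The natural choice is not to use $\mathcal A^2|\nabla u|^2$ (Sternberg--Zumbrun) everywhere, but to split. On $\mathcal S_2$ we use Lemma~\ref{lem:sternberg-zumbrun} with $\xi$ to produce the term $\int_{\mathcal S_2}\mathcal A^2|\nabla u|^2$. On the normal cells we instead plug $\xi\,(\partial_t u)_+$ into the quadratic form $Q_u$. The key identity is $Q_u(\psi w)=\int w^2|\nabla\psi|^2+\int\psi^2 w\,(-\Delta+W'')w$ for $w=(\partial_tu)_+$. Lemma~\ref{lem:normal-differentiated-equation} then gives $w(-\Delta+W'')w=-|\mathrm{II}_{\Gamma^t}|^2w^2+\text{errors}$ on $\{\partial_tu>0\}$. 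Integrating $|\mathrm{II}_{\Gamma^t}|^2(\partial_tu)^2$ across each interval and using \eqref{eq:ww-core-profile}, the Jacobian $1+O(|t|\cD^{-1})$, and $\int g'^2=\sigma_0$ produces $\sigma_0|\mathrm{II}_\Sigma|^2\varphi^2$ up to $O(\cD^{-2}\log\cD)\cdot|\mathrm{II}|^2$ errors. Similarly $\int w^2|\nabla\xi|^2$ gives $\sigma_0|\nabla_\Sigma\varphi|^2(1+o(1))$ plus cutoff contributions from $\eta_y$ and from $\nabla_\Sigma b_\pm$. The latter are harmless because \eqref{eq:normal-endpoint-positive} makes $w\le C\cD^{-2}$ where $\eta_y'\ne0$. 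Dividing by $\sigma_0$ and absorbing the $(1+o(1))$ factor gives the constant $3/2$.

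The gluing is done with one global competitor. Since $|\nabla u|$ and $(\partial_t u)_+$ agree up to $C\cD^{-2}$ on the cells (Lemma~\ref{lem:normal-gradient-comparison}), in the transition region $\mathcal S_{4L_0}\setminus\mathcal S_{2L_0}$ we interpolate between $|\nabla u|\xi$ and $(\partial_tu)_+\xi$. Sternberg--Zumbrun is itself $Q_u(|\nabla u|\xi)\ge0$ rewritten. The interpolation cost is $\int C\cD^{-4}$ over a region of volume $\lesssim N L_0^3\cdot\log L_0$ by \eqref{eq:cluster-cubic-area}, giving $\lesssim N L_0^{-1}\log L_0\le CNL_0^{-1/4}$. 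Far from the cluster, overlapping extensions in $\mathcal N$ are handled by the disjointness of the open intervals: $w$ is defined globally as $(\partial_\nu u)_+$ with $\nu$ the nearest-sheet normal, and its jumps occur only at equidistant endpoints, where $w\le C\cD^{-2}$ by \eqref{eq:normal-endpoint-positive}. So we instead set $w=(\partial_tu-C\cD^{-2})_+$ cut off, which vanishes near endpoints and is Lipschitz.

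The remaining work is the error bookkeeping, which we expect to be the main obstacle. The right side of \eqref{eq:normal-differentiated-equation} contains the terms $2\mathrm{II}:\nabla^2_{\Gamma^t}u$ and $\nabla H\cdot\nabla_{\Gamma^t}u$. By \eqref{eq:normal-tangential-bounds} and \eqref{eq:normal-mean-curvature-gradient}, multiplied by $w\le Ce^{-\sqrt2|t|}+C\cD^{-2}$ from \eqref{eq:normal-cell-derivative-tail}, these are at most $C\cD^{-3}+C\cD^{-10/3}$ pointwise. Integrated over a cell of length $\lesssim\log\cD$, this yields $C\cD^{-3}\log\cD$, which is not quite $\cD^{-13/4}$. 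The critical issue is the cross term $\cD^{-1}\cdot\cD^{-2}$ paired with $e^{-\sqrt2|t|}$. For this term we will use the profile form \eqref{eq:normal-two-profiles}: $\nabla^2_{\Gamma^t}u=\nabla^2_{\Gamma^t}g(r)$, and by \eqref{eq:normal-neighbor-derivative} it is concentrated near the endpoints, where $g'(t)\sim\cD^{-1}$-small. The product then gains an extra factor $e^{-\sqrt2 b_\pm}(\dots)\le C\cD^{-1}$, leading to $\cD^{-13/4}$ after a Young-inequality split. The additional $|\mathrm{II}|^2$ correction terms $C\cD^{-2}\cdot\cD^{-2}\log\cD$ are also $\le C\cD^{-13/4}$. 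Near the cluster, any leftover crude bound is summed with \eqref{eq:cluster-cubic-area}: $\sum_k (2^kL_0)^{3}N(2^kL_0)^{-13/4}\le CNL_0^{-1/4}$, which accounts for the last term.
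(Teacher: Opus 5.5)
Your ingredients largely match the paper's: Sternberg--Zumbrun near the cluster, $(\partial_tu)_+$ times an extension of $\varphi$ on the normal cells, Lemma~\ref{lem:normal-differentiated-equation}, and a Young split for $2\mathrm{II}_{\Gamma^t}:\nabla^2_{\Gamma^t}u$. There is, however, a genuine gap in the step that turns the far-field test function into an ambient one. With $\eta_y$ supported up to $b_\pm\pm1$, the function $\varphi(y)\eta_y(t)(\partial_tu)_+$ is multivalued where neighboring extended cells overlap. Cutting off inside $(b_-,b_+)$ instead costs $\cD^{-2}$ per unit area, since $(\partial_tu)_+\sim e^{-\sqrt2|b_\pm|}$ at unit distance before an equidistant endpoint, and this can be $\sim\cD^{-1}$.

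Your remedy $w=(\partial_tu-C\cD^{-2})_+$ is Lipschitz, but on $\{w>0\}$ it satisfies $(-\Delta+W''(u))w=(-\Delta+W''(u))\partial_tu-C\cD^{-2}W''(u)+O(\cD^{-4})$. So $Q_u(\psi w)$ acquires the term $-C\int\psi^2\cD^{-2}W''(u)\,w$, which your bookkeeping never addresses. Since $W''(u)\partial_tu=\partial_tW'(u)$, along a ray with positivity interval $[t_-,t_+]$ of $w$,
\[
 \int_{t_-}^{t_+}W''(u)\,w\,dt=\bigl[W'(u)\bigr]_{t_-}^{t_+}-C\cD^{-2}\int_{t_-}^{t_+}W''(u)\,dt .
\]
Both terms on the right are negative. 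Moreover, $W'(u(t_\pm))\approx\mp2(1-|u(t_\pm)|)$, with $1-|u(t_\pm)|$ comparable to $e^{-\sqrt2|b_\pm|}$. At the sharp separation $\sqrt2\log\cD-C$ allowed by \eqref{eq:ww-sheet-separation}, this is of order $\cD^{-1}$. The shift therefore adds a positive term of order $\cD^{-3}\varphi^2$ per unit area to the right-hand side. Against the cubic area bound \eqref{eq:cluster-cubic-area} this sums to order $N\log(\Lambda/L_0)$, not $C\int\cD^{-13/4}\varphi^2+CNL_0^{-1/4}$. It therefore cannot be absorbed in Proposition~\ref{prop:cluster-stability}, whose threshold on $L_0$ must not depend on $\Lambda$. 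The shift cannot be taken below $\cD^{-2}$, which is the precision of \eqref{eq:ww-profile-inputs-proof}. Localizing the shift near the endpoints meets the same $\cD^{-3}$ obstruction, through the term $\cD^{-2}\int w\,\zeta''$.

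The missing idea is that no single-valued ambient test function is needed. Stability gives $\lambda_1(-\Delta+W''(u),B_{R_\Lambda+\Lambda})>0$, hence a positive solution $h$ of the linearized equation there with $|\nabla\log h|\le C$. Because $F$ is a local isometry, $h\circ F$ solves the same equation on the parametrizing manifold $\mathcal N$, overlaps included. The ground-state identity $\int\eta f(-\Delta+W''(u))f=\int\eta h^2|\nabla(f/h)|^2-\int f^2(\tfrac12\Delta\eta+\nabla\eta\cdot\nabla\log h)$ then gives nonnegativity on $\mathcal N$ for the unshifted $\widetilde\xi=\widetilde\varphi(\partial_tu)_+$. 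Its cutoffs on the exterior unit strips cost only $C\cD^{-4}$ by \eqref{eq:normal-endpoint-positive}.

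The same identity also replaces your interpolation. Apply it ambiently to $|\nabla u|$ with a weight $\chi_0$ (a cutoff between $\mathcal S_{5L_0/2}$ and $\mathcal S_{3L_0}$), and on $\mathcal N$ to $\widetilde\xi$ with weight $1-\chi_0$. The gluing error is then $\int(|\nabla u|^2-\widetilde\xi^2)(\tfrac12\Delta\chi_0+\nabla\chi_0\cdot\nabla\log h)$, which is quadratic in the discrepancy and hence of size $\cD^{-4}$. By contrast, interpolating the test functions directly produces cross terms such as $\int(f-g)\nabla\chi_0\cdot\nabla f$ that are only linear in $f-g$. So your claimed $\cD^{-4}$ gluing cost does not follow as stated.
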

\begin{proof}
Let us briefly explain the idea of the proof. Away from the bad set, extend $\varphi$ along the normal rays and cut it off near the ends of each normal interval. The usual Sternberg--Zumbrun computation tests stability with a function of the form $\xi=\varphi |\nabla u|$. The idea is to use $\widetilde\xi=\widetilde\varphi(\partial_tu)_+$ instead: by \eqref{eq:normal-gradient-comparison}, the positive normal derivative approximates $|\nabla u|$ inside the interval, while \eqref{eq:normal-endpoint-positive} makes it small on the exterior unit strips where the normal cutoffs vary. In those strips, $\widetilde\varphi$ is obtained by cutting off $\varphi$, and this smallness allows us to control the resulting cutoff errors.

Now, importantly, $\widetilde\xi$ need not actually define a single-valued ambient test function, since (even after introducing the cutoffs) there might be an overlap in the exterior unit strips. We therefore work on the ``parametrizing manifold'' $\mathcal N$ from \eqref{eq:normal-param-manifold} with the pulled-back Euclidean metric instead. Stability provides a positive solution of the linearized equation, whose pullback allows us to use a weighted integration-by-parts identity on $\mathcal N$. We apply this identity to $\widetilde\xi$ on $\mathcal N$ and to $|\nabla u|$ in the ambient space, with complementary cutoff weights. In the latter application, the pointwise Sternberg--Zumbrun identity yields the curvature term near the bad set. Comparing the resulting formulas allows us to retain this positive contribution.

We now give the precise argument. Let us first establish and recall some conventions and setting. Translate $z_\Lambda$ to the origin. Recall that derivatives and integrals on $\mathcal N$ will be computed with the Euclidean metric pulled back by $F(y,t)=y+t\nu(y)$. Similarly, ambient functions are composed with $F$ in the coordinates $(y,t)$. In particular $\partial_tu(y,t)=\partial_t(u\circ F)(y, t) = \nabla u(F(y, t))\cdot \nu(y)$ and
\[
 \int_{\mathcal N}f
 =\int_\Sigma\int_{b_-(y)-1}^{b_+(y)+1}
 f(y,t)\det(\mathrm{Id}-t\mathrm{II}_\Sigma(y))\,dt\,d\mathcal H^2(y).
\]
Each starting zero is integrated once, whether or not the extended intervals have overlapping images. Choose $\chi:\mathbb R\to[0,1]$ smooth, with $\chi\equiv 0$ on $(-\infty,0]$ and $\chi\equiv 1$ on $[1/2,\infty)$. Set
\begin{equation*}
 \begin{split}
 \widetilde\varphi(y,t)&=\varphi(y)\big[\chi(\cD(y)-L_0)
                   \chi(b_+(y)+1-t)\chi(t-b_-(y)+1)\big],\\
 \widetilde\xi(y,t)&=\widetilde\varphi(y,t)(\partial_tu(y,t))_+,
 \end{split}
\end{equation*}
so that $\widetilde \varphi$ vanishes at $\cD(y)=L_0$ and at the normal endpoints $t=b_-(y)-1$ and $t=b_+(y)+1$, and it differs from $\varphi$ only at unit distance from these boundaries, where $\nabla \chi$ is active. In particular, $\widetilde \varphi=0$ for $\cD\geq\Lambda/4$, $\widetilde\xi\in H_0^1(\mathcal N)$, and $F(\operatorname{supp}\widetilde\xi)\Subset B_{R_\Lambda+\Lambda/3}$.

Now, observe that we want to find the term $\int_{\mathcal S_2}\mathcal A^2|\nabla u|^2$ in \eqref{eq:bad-ball-stability-summable}, which comes from the usual Sternberg--Zumbrun computation with $\xi=\varphi|\nabla u|$ (and not $\widetilde \xi$). We will nevertheless be able to relate the two.

\medskip\noindent\textbf{Step 1.} There holds
\begin{equation}\label{eq:normal-joining-energy}
 \int_{\mathcal S_2}\mathcal A^2|\nabla u|^2
 \leq\int_{\mathcal N}(1-\chi_0)\widetilde\xi(-\Delta+W''(u))\widetilde\xi
       +CNL_0^{-1/4},
\end{equation}
where $\chi_0$ is a smooth cutoff, with $\chi_0\equiv 1$ on $\mathcal S_{5L_0/2}$, $\chi_0\equiv 0$ outside $\mathcal S_{3L_0}$ (thus ${\rm supp}\,\chi_0\subset B_{R_\Lambda+\Lambda/3}$), and satisfies
\begin{equation}\label{eq:normal-joining-cutoff}
 0\leq\chi_0\leq1,\qquad
 |\nabla\chi_0|\leq C/L_0,\qquad
 |\Delta\chi_0|\leq C/L_0^2.
\end{equation}
Since $\chi_0\equiv1$ on $\mathcal S_2$, it suffices to bound $\int\chi_0\mathcal A^2|\nabla u|^2$ by the right term in \eqref{eq:normal-joining-energy}. Differentiating the Allen--Cahn equation gives, pointwise on $\{\nabla u\ne0\}$,
\begin{equation}\label{eq:normal-sz-pointwise}
\begin{aligned}
 |\nabla u|(-\Delta+W''(u))|\nabla u|
 &=-\frac12\Delta|\nabla u|^2+|\nabla|\nabla u||^2
                                      +W''(u)|\nabla u|^2 =-|D^2u|^2+|\nabla|\nabla u||^2
 =-\mathcal A^2|\nabla u|^2.
\end{aligned}
\end{equation}
This motivates using the following weighted ground-state identity, arising from stability: For test functions $f$ and $\eta$, with $\eta\geq0$ smooth, and either $\eta$ compactly supported or $f$ with zero trace, there holds
\begin{equation}\label{eq:normal-weighted-identity}
 \int\eta f(-\Delta+W''(u))f
 =\int\eta h^2|\nabla(f/h)|^2
  -\int f^2\bigl(\tfrac12\Delta\eta+
                      \nabla\eta\cdot\nabla\log h\bigr),
\end{equation}
where $h$ solves the linearized equation
\begin{equation}\label{eq:normal-linearized-h}
\left\{
\begin{array}{rcll}
-\Delta h+W''(u)h&=&0 & \text{in }B_{R_\Lambda+\Lambda},\\
h&=&1 & \text{on }\partial B_{R_\Lambda+\Lambda}.
\end{array}
\right.
\end{equation}
Stability and strict domain monotonicity of the first Dirichlet eigenvalue give $\lambda_1(-\Delta+W''(u),B_{R_\Lambda+\Lambda})>0$. The Dirichlet problem is therefore uniquely solvable; integrating by parts gives \eqref{eq:normal-weighted-identity}. The maximum principle gives $h>0$, and Harnack and interior gradient estimates at unit scale give
\begin{equation}\label{eq:normal-positive-solution}
 |\nabla\log h|\leq C
 \quad\text{on }B_{R_\Lambda+\Lambda/2}.
\end{equation}
Applying \eqref{eq:normal-weighted-identity} with $f=|\nabla u|$, $\eta=\chi_0$, and using \eqref{eq:normal-sz-pointwise}, we find then
\begin{equation}\label{eq:normal-ambient-square-identity}
\begin{aligned}
\int\chi_0\mathcal A^2|\nabla u|^2-\int_{\mathbb R^3}|\nabla u|^2
      \bigl(\tfrac12\Delta\chi_0+\nabla\chi_0\cdot\nabla\log h\bigr)=-\int\chi_0h^2\left|\nabla\frac{|\nabla u|}{h}\right|^2\le 0.
\end{aligned}
\end{equation}
We now apply \eqref{eq:normal-weighted-identity} with $f=\widetilde\xi$, $\eta=1-\chi_0$, and on $\mathcal N$ instead. To be precise: $F$ is a local isometry on $\mathcal N$, thus $h\circ F$ satisfies the same linearized equation. We can therefore perform on $\mathcal N$ the same integration by parts that leads to \eqref{eq:normal-weighted-identity} from \eqref{eq:normal-linearized-h}. Likewise, there are no boundary terms, thanks to $\widetilde\xi$ vanishing at $t=b_--1$ and $t=b_++1$. We then find
\begin{equation}\label{eq:normal-square-identity0}
\begin{aligned}
 \int_{\mathcal N}(1-\chi_0)\widetilde\xi(-\Delta+W''(u))\widetilde\xi-\int_{\mathcal N}\widetilde\xi^2
      \bigl(\tfrac12\Delta\chi_0+\nabla\chi_0\cdot\nabla\log h\bigr)
 &=\int_{\mathcal N}(1-\chi_0)h^2
                 |\nabla(\widetilde\xi/h)|^2\ge 0. 
\end{aligned}
\end{equation}
Subtracting \eqref{eq:normal-square-identity0} from \eqref{eq:normal-ambient-square-identity}, and setting $\Psi_0:=\tfrac12\Delta\chi_0+\nabla\chi_0\cdot\nabla\log h$, we get
\begin{equation}\label{eq:normal-square-identity}
\begin{aligned}
 \int\chi_0\mathcal A^2|\nabla u|^2\leq \int_{\mathcal N}(1-\chi_0)\widetilde\xi(-\Delta+W''(u))\widetilde\xi+\int_{\mathbb R^3}|\nabla u|^2
      \Psi_0-\int_{\mathcal N}\widetilde\xi^2
      \Psi_0. 
\end{aligned}
\end{equation}
To reach \eqref{eq:normal-joining-energy} it only remains to bound the difference between the last two terms by $CNL_0^{-1/4}$. We compute it by splitting into three regions:
\begin{equation}\label{eq:normal-weighted-joining}
\begin{aligned}
 \int_{\mathbb R^3}& |\nabla u|^2
 \Psi_0
 -\int_{\mathcal N}\widetilde\xi^2
 \Psi_0 =\int_{\mathcal N\cap\{b_-<t<b_+\}}
       \bigl(|\nabla u|^2-\widetilde\xi^2\bigr)
\Psi_0+\int_{\mathbb R^3\setminus F(\mathcal N\cap\{b_-<t<b_+\})}
       |\nabla u|^2
\Psi_0 -\int_{\mathcal N\cap(\{t\leq b_-\}\cup\{t\geq b_+\})}
       \widetilde\xi^2
\Psi_0\\
 &\leq\frac C{L_0}\int_{\Sigma\cap\{2L_0<\cD<4L_0\}}
          \int_{b_-}^{b_+}\bigl(|\nabla u|^2-\widetilde\xi^2\bigr)\,dt +\frac C{L_0}
       \int_{\mathcal S_{3L_0}\setminus
         (\mathcal S_{5L_0/2}\cup F(\mathcal N\cap\{b_-<t<b_+\}))}
                       |\nabla u|^2\\
 &\qquad+\frac C{L_0}\int_{\Sigma\cap\{2L_0<\cD<4L_0\}}
          \bigg(\int_{b_--1}^{b_-}\widetilde\xi^2\,dt
                +\int_{b_+}^{b_++1}\widetilde\xi^2\,dt\bigg).
\end{aligned}
\end{equation}
Here we used the support and derivative bounds for $\chi_0$ in \eqref{eq:normal-joining-cutoff}, the bound for $\nabla\log h$ in \eqref{eq:normal-positive-solution}, and the bounded Jacobian. Moreover, every interval meeting the support of the derivatives of $\chi_0$ starts at $2L_0<\cD(y)<4L_0$, where $\varphi=1$. On its unextended part $\widetilde\varphi=1$ and $\widetilde\xi=(\partial_tu)_+$, so the differences of squares in the expression are nonnegative.

Finally, we claim the following bounds:
\begin{align}
 \frac1{L_0}\int_{\Sigma\cap\{2L_0<\cD<4L_0\}}
       \int_{b_-}^{b_+}\bigl(|\nabla u|^2-\widetilde\xi^2\bigr)\,dt
 &\leq CNL_0^{-2}\log L_0,
 \label{eq:normal-joining-error}\\
  \frac1{L_0}
       \int_{\mathcal S_{3L_0}\setminus
         (\mathcal S_{5L_0/2}\cup F(\mathcal N\cap\{b_-<t<b_+\}))}
                       |\nabla u|^2
 &\leq CNL_0^{-4},
 \label{eq:normal-outside-interval-error}\\
 \frac1{L_0}\int_{\Sigma\cap\{2L_0<\cD<4L_0\}}
       \bigg(\int_{b_--1}^{b_-}\widetilde\xi^2\,dt
             +\int_{b_+}^{b_++1}\widetilde\xi^2\,dt\bigg)
 &\leq CNL_0^{-2}.
 \label{eq:normal-joining-strip-error}
\end{align}

For \eqref{eq:normal-joining-error}, on the unextended intervals $\varphi=\widetilde\varphi=1$, so $|\nabla u|^2-\widetilde\xi^2 =|\nabla_{\Gamma^t}u|^2+(\partial_tu)_-^2\leq C\cD(y)^{-4}$ by \eqref{eq:normal-tangential-bounds} and \eqref{eq:normal-gradient-comparison}; we are using that if $\partial_tu<0$, \eqref{eq:normal-gradient-comparison} bounds $|\nabla u|$ itself. Integrating over length at most $6\log \cD(y)$ and using Lemma~\ref{lem:area-bounds} proves the first bound.

To prove \eqref{eq:normal-joining-strip-error}, we combine the same area estimate with $\widetilde\xi^2\leq C\cD(y)^{-4}$ on the exterior unit strips (recall \eqref{eq:normal-endpoint-positive}).

Finally, for \eqref{eq:normal-outside-interval-error}, almost every point in the domain of integration satisfies $\operatorname{dist}(x,\Sigma_0)\geq3\log L_0$. Indeed, almost every closer point has a unique nearest zero $y$ with $2L_0<\cD(y)<4L_0$ and $|t|<3\log L_0<T(y)$; the same zero remains uniquely nearest along the segment, so $b_-(y)<t<b_+(y)$, contrary to the exclusion of those images. The proof of Lemma~\ref{lem:uniform-regularity-phase-decay}, using $|u|(1+|u|)\geq3/2$ sufficiently far from $\Sigma_0$, gives $|\nabla u|\leq Ce^{-\operatorname{dist}(x,\Sigma_0)}$. Thus the last term is bounded by $CL_0^{-7}|\mathcal S_{3L_0}|\leq CNL_0^{-4}$, by \eqref{eq:cluster-cover-mass}.

Substituting \eqref{eq:normal-joining-error}-\eqref{eq:normal-outside-interval-error}-\eqref{eq:normal-joining-strip-error} into \eqref{eq:normal-square-identity}-\eqref{eq:normal-weighted-joining} proves \eqref{eq:normal-joining-energy}.
\par\medskip

We now compare the integral on the right of \eqref{eq:normal-joining-energy} with the geometric terms in \eqref{eq:bad-ball-stability-summable} and conclude:

\medskip\noindent\textbf{Step 2.} There holds
\begin{equation}\label{eq:normal-weighted-sheets}
 \begin{aligned}
 \int_{\mathcal N}(1-\chi_0)\widetilde\xi(-\Delta+W''(u))\widetilde\xi
 &\leq\frac{9\sigma_0}{8}\int_\Sigma|\nabla_\Sigma\varphi|^2 -\frac{3\sigma_0}{4}
       \int_{\Sigma\setminus\mathcal S_{4L_0}}
                   |\mathrm{II}_\Sigma|^2\varphi^2
       +C\int_{\Sigma\setminus\mathcal S_{2L_0}}\cD^{-13/4}\varphi^2.
 \end{aligned}
\end{equation}
Indeed, first integrating by parts on $\mathcal N$ using $\widetilde\xi=(\partial_tu)_+\widetilde\varphi\in H_0^1(\mathcal N)$ we get
\begin{multline}\label{eq:normal-product-integration}
 \int_{\mathcal N}(1-\chi_0)\widetilde\xi
                         (-\Delta+W''(u))\widetilde\xi
 =\int_{\mathcal N}(1-\chi_0)(\partial_tu)_+\widetilde\varphi^2
                         (-\Delta+W''(u))\partial_tu\\
 +\int_{\mathcal N}(1-\chi_0)(\partial_tu)_+^2
                                      |\nabla\widetilde\varphi|^2
 -\int_{\mathcal N}(\partial_tu)_+^2\widetilde\varphi\,
                          \nabla\chi_0\cdot\nabla\widetilde\varphi.
\end{multline}
Then, substituting \eqref{eq:normal-differentiated-equation} into \eqref{eq:normal-product-integration} gives
\begin{multline}\label{eq:normal-weighted-test}
 \int_{\mathcal N}(1-\chi_0)\widetilde\xi(-\Delta+W''(u))\widetilde\xi
 =\int_{\mathcal N}(1-\chi_0)(\partial_tu)_+^2
       (|\nabla\widetilde\varphi|^2-|\mathrm{II}_{\Gamma^t}|^2\widetilde\varphi^2)\\
 +\underbrace{\int_{\mathcal N}(1-\chi_0)(\partial_tu)_+\widetilde\varphi^2
       \bigl(2\mathrm{II}_{\Gamma^t}:\nabla_{\Gamma^t}^2u
            +\nabla_{\Gamma^t}H_{\Gamma^t}
                         \cdot\nabla_{\Gamma^t}u\bigr)}_{\mathrm{I_1}}
                         -\underbrace{\int_{\mathcal N}(\partial_tu)_+^2\widetilde\varphi\,
\nabla\chi_0\cdot\nabla\widetilde\varphi}_{\mathrm{I_2}}.
 \end{multline}
Using $ab\leq a^2/8+2b^2$ we have
\[
 \bigl(|\mathrm{II}_{\Gamma^t}|(\partial_tu)_+\bigr)
 \bigl(2|\nabla_{\Gamma^t}^2u|\bigr)
 \leq\tfrac18|\mathrm{II}_{\Gamma^t}|^2(\partial_tu)_+^2
      +8|\nabla_{\Gamma^t}^2u|^2,
\]
so that by \eqref{eq:normal-tangential-bounds} and \eqref{eq:normal-mean-curvature-gradient} we can bound
\begin{equation*}
 \begin{aligned}
 \mathrm{I_1}
 &\leq\tfrac18\int_{\mathcal N}(1-\chi_0)|\mathrm{II}_{\Gamma^t}|^2
                         (\partial_tu)_+^2\widetilde\varphi^2
       +C\int_{\Sigma\setminus\mathcal S_{2L_0}}
                       (\cD^{-4}\log \cD+\cD^{-10/3}\log \cD)\,\varphi^2\\
 &\leq\tfrac18\int_{\mathcal N}(1-\chi_0)|\mathrm{II}_{\Gamma^t}|^2
                         (\partial_tu)_+^2\widetilde\varphi^2
       +C\int_{\Sigma\setminus\mathcal S_{2L_0}}\cD^{-13/4}\varphi^2.
 \end{aligned}
\end{equation*}
Here we used $b_+-b_-+2\leq6\log \cD(y)+2$ and $(\partial_tu)_+\leq C$.

We next claim that
\begin{equation*}
 |\mathrm{I_2}|
 \leq C\int_{\Sigma\setminus\mathcal S_{2L_0}}\cD^{-5}\varphi^2.
\end{equation*}
On $\{\nabla\chi_0\ne0\}$ we have $2L_0<\cD(y)<4L_0$ and $\varphi=\chi(\cD(y)-L_0)=1$. Then $\nabla\widetilde\varphi=0$ for $b_-<t<b_+$, so the integrand defining $\mathrm{I_2}$ can be nonzero only for
\[
 b_--1<t<b_-\qquad\text{or}\qquad b_+<t<b_++1.
\]
There, \eqref{eq:ww-endpoint-bounds} and \eqref{eq:normal-endpoint-positive} give
\[
 \big|(\partial_tu)_+^2\widetilde\varphi\,
                   \nabla\chi_0\cdot\nabla\widetilde\varphi\big|
 \leq C\cD(y)^{-4}\varphi^2\,\cD(y)^{-1}.
\]
Integrating proves the claim.

Combining these bounds, \eqref{eq:normal-weighted-test} becomes
\begin{equation}\label{eq:normal-after-errors}
\begin{aligned}
 \int_{\mathcal N}(1-\chi_0)\widetilde\xi(-\Delta+W''(u))\widetilde\xi
 &\leq\int_{\mathcal N}(1-\chi_0)(\partial_tu)_+^2
     \bigl(|\nabla\widetilde\varphi|^2
           -\tfrac78|\mathrm{II}_{\Gamma^t}|^2\widetilde\varphi^2\bigr)
 +C\int_{\Sigma\setminus\mathcal S_{2L_0}}\cD^{-13/4}\varphi^2.
\end{aligned}
\end{equation}

It remains to pass from $\mathcal N$ to $\Sigma$ in the first integral on the right, by integrating out the normal directions. We claim that, for $L_0$ sufficiently large,
\begin{equation*}
 \begin{aligned}
 \int_{\mathcal N}(1-\chi_0)(\partial_tu)_+^2|\nabla\widetilde\varphi|^2
 &\leq\frac{9\sigma_0}{8}\int_\Sigma|\nabla_\Sigma\varphi|^2
       +C\int_{\Sigma\setminus\mathcal S_{2L_0}}\cD^{-4}\varphi^2,\\
 \int_{\mathcal N}(1-\chi_0)|\mathrm{II}_{\Gamma^t}|^2
                         (\partial_tu)_+^2\widetilde\varphi^2
 &\geq\frac{7\sigma_0}{8}
       \int_{\Sigma\setminus\mathcal S_{4L_0}}
                         |\mathrm{II}_\Sigma|^2\varphi^2.
 \end{aligned}
\end{equation*}
For both inequalities, we use \eqref{eq:ww-core-profile} and \eqref{eq:normal-cell-derivative-tail}, which give
\begin{equation}\label{eq:normal-squared-derivative-integrals}
 \int_{-\frac12\log\cD(y)}^{\frac12\log\cD(y)}
       (\partial_tu)^2\,dt=\sigma_0+o(1),\qquad
 \int_{b_--1}^{b_++1}(\partial_tu)_+^2\,dt
       =\sigma_0+o(1),
\end{equation}
with $o(1)\to0$ uniformly for $\cD(y)\geq L_0$ as $L_0\to\infty$. On the smaller interval, $\partial_tu>0$.

For the first inequality, recall that $1-\chi_0=0$ if $\cD(y)\leq2L_0$. For $\cD(y)>2L_0$, we have $\widetilde\varphi=\varphi$ on $b_-<t<b_+$, while \eqref{eq:ww-endpoint-bounds} and \eqref{eq:normal-endpoint-positive} give
\begin{equation*}
 \left(\int_{b_--1}^{b_-}+\int_{b_+}^{b_++1}\right)
       (\partial_tu)_+^2|\nabla\widetilde\varphi|^2
                  \det(\mathrm{Id}-t\mathrm{II}_\Sigma)\,dt
 \leq C\cD(y)^{-4}\bigl(\varphi(y)^2+|\nabla_\Sigma\varphi(y)|^2\bigr).
\end{equation*}
Under $F$, tangential gradients are multiplied by $(\mathrm{Id}-t\mathrm{II}_\Sigma)^{-1} =\mathrm{Id}+O(|t|\cD(y)^{-1})$ and area by $\det(\mathrm{Id}-t\mathrm{II}_\Sigma)=1+O(|t|\cD(y)^{-1})$. Thus, \eqref{eq:normal-squared-derivative-integrals} proves the first inequality.

For the second inequality, restrict the integral to $(y,t)$ with $\cD(y)\geq4L_0$ and $|t|\leq\frac12\log\cD(y)$---we discard the rest of the domain by positivity. There $\chi_0=0$, $\widetilde\varphi=\varphi$, and $|\mathrm{II}_{\Gamma^t}|^2\det(\mathrm{Id}-t\mathrm{II}_\Sigma) \geq(1-C|t|\cD(y)^{-1})|\mathrm{II}_\Sigma|^2$. The first identity in \eqref{eq:normal-squared-derivative-integrals} therefore proves the second inequality.

Substituting these two inequalities into \eqref{eq:normal-after-errors} gives \eqref{eq:normal-weighted-sheets}. Together with \eqref{eq:normal-joining-energy} we conclude \eqref{eq:bad-ball-stability-summable}.
\end{proof}
We can readily absorb the two last ``error'' terms in \eqref{eq:bad-ball-stability-summable}. The power $-13/4<-3$ is crucial for this.

\begin{prop}[Geometric stability, without the error terms]\label{prop:cluster-stability}
Consider the setting in Definition~\ref{defn:bounded-cluster}. After increasing $R_*$ if necessary, assume that $L_0\geq4R_*$ and $\Lambda\geq32L_0$. Let $\varphi:\Sigma_0\to[0,1]$ satisfy $\varphi|_\Sigma\in C^{0,1}(\Sigma)$ and
\[
 \varphi=1\quad\text{on }\Sigma_0\cap\mathcal S_{4L_0},\qquad
 \varphi=0\quad\text{on }\Sigma_0\cap\{\cD\geq\Lambda/4\}.
\]
Then, for $L_0$ sufficiently large, we have
\begin{equation}\label{eq:block-classification-stability}
 \frac{\delta N}{2\cdot5^3\sigma_0}
 +\int_{\Sigma\setminus\mathcal S_{4L_0}}|\mathrm{II}_\Sigma|^2\varphi^2
 \leq\frac32\int_\Sigma|\nabla_\Sigma\varphi|^2.
\end{equation}
The lower threshold for $L_0$ is universal, independent of $u$, $N$, $R_\Lambda$ and $\Lambda$.
\end{prop}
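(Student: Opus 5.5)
We start from Proposition~\ref{prop:bad-ball-stability} and absorb its two error terms into the left-hand side. The absorbing quantity is the bad-set curvature term, which \eqref{eq:cluster-cover-mass} bounds below by
\[
 \frac1{\sigma_0}\int_{\mathcal S_2}\mathcal A^2|\nabla u|^2\geq\frac{\delta N}{5^3\sigma_0}.
\]
Hence it suffices to show that
\[
 C\int_{\Sigma\setminus\mathcal S_{2L_0}}\cD^{-13/4}\varphi^2+CNL_0^{-1/4}\leq\frac{\delta N}{2\cdot5^3\sigma_0}
\]
for all $L_0$ larger than a universal threshold. Since $\delta$ is a fixed universal constant, both error terms must be shown to be $o(1)\cdot N$ as $L_0\to\infty$, uniformly in $u$, $N$, $R_\Lambda$ and $\Lambda$. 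For the second term this is immediate.

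For the first term we use $\varphi\leq1$ and the cubic area bound \eqref{eq:cluster-cubic-area} of Lemma~\ref{lem:area-bounds}. We decompose $\Sigma\setminus\mathcal S_{2L_0}$ into dyadic shells $\{2^kL_0\leq\cD<2^{k+1}L_0\}$, $k\geq1$. On the $k$-th shell the integrand is at most $(2^kL_0)^{-13/4}$, and the area is at most $CN(2^{k+1}L_0)^3$. Summing over $k$ gives
\[
 C\int_{\Sigma\setminus\mathcal S_{2L_0}}\cD^{-13/4}\varphi^2\leq CN\sum_{k\geq1}2^{3}(2^kL_0)^{-1/4}\leq CNL_0^{-1/4}.
\]
Equivalently, one can use the layer-cake formula with $\mathcal H^2(\Sigma\cap\{\cD<t\})\leq CNt^3$, which yields $\int_{2L_0}^\infty t^{-17/4}\,Nt^3\,dt\sim NL_0^{-1/4}$. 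The exponent $-13/4<-3$ is exactly what makes this series converge; this is the only point requiring care.

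Finally, we choose $L_0$ large, depending only on universal constants, so that $2CL_0^{-1/4}\leq\delta/(2\cdot5^3\sigma_0)$. Subtracting the error terms from \eqref{eq:bad-ball-stability-summable} then leaves at least $\delta N/(2\cdot5^3\sigma_0)$ on the left, which is \eqref{eq:block-classification-stability}. The threshold on $L_0$ is the maximum of this choice and the threshold from Proposition~\ref{prop:bad-ball-stability}, so it is universal. There is no real obstacle here beyond checking the convergent summation and its uniformity in $N$.
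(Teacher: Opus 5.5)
Your proposal is correct and follows essentially the same route as the paper. The paper likewise applies the cubic area bound \eqref{eq:cluster-cubic-area} on the dyadic shells $\{2^kL_0\leq\cD<2^{k+1}L_0\}$ to get $CNL_0^{-1/4}$, and then absorbs both error terms into the lower bound $\delta N/(5^3\sigma_0)$ coming from \eqref{eq:cluster-cover-mass}, choosing $L_0$ large by a universal threshold.
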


\begin{proof}
Applying the cubic area bound in Lemma~\ref{lem:area-bounds} on the sets $2^kL_0\leq\cD<2^{k+1}L_0$, $k\geq1$, and using $0\leq\varphi\leq1$, we obtain
\begin{equation*}
 \int_{\Sigma\setminus\mathcal S_{2L_0}}\varphi^2 \cD^{-13/4}\,d\mathcal H^2\leq \int_{\Sigma\setminus\mathcal S_{2L_0}} \cD^{-13/4}\,d\mathcal H^2
 \leq CN\sum_{k=1}^{\infty}(2^kL_0)^{-1/4}
 \leq CNL_0^{-1/4}.
\end{equation*}
Choose $L_0$ sufficiently large that the last two terms in \eqref{eq:bad-ball-stability-summable} are bounded by $\delta N/(2\cdot5^3\sigma_0)$. Since \eqref{eq:cluster-cover-mass} gives $\sigma_0^{-1}\int_{\mathcal S_2}\mathcal A^2|\nabla u|^2 \geq\delta N/(5^3\sigma_0)$, these terms can be absorbed into the left-hand side, yielding \eqref{eq:block-classification-stability}.
\end{proof}
\section{Proof of Theorem~\ref{thm:conditional-classification}}
\label{sec:classification}
\label{sec:intrinsic-curvature}
We now complete the proof using the geometric stability inequality of Proposition~\ref{prop:cluster-stability}, intrinsic area estimates, and a logarithmic cutoff. We first take $L_0$ sufficiently large and construct a smooth neighborhood of the bad cluster at scale $L_0$, whose boundary meets $\Sigma$ transversely.

\begin{lem}[Large smooth neighborhood of the bad set]\label{lem:smooth-bad-region}
In the setting of Definition~\ref{defn:bounded-cluster}, there is a bounded open set $\mathcal B$ with smooth boundary such that:
\begin{enumerate}
\item There holds
\begin{equation}\label{eq:cluster-smooth-region}
 \mathcal S_{4L_0}\subset\mathcal B,\qquad
 \overline{\mathcal B}\subset\mathcal S_{4L_0+2}\Subset B_{R_\Lambda}(z_\Lambda).
\end{equation}
\item $\partial\mathcal B\subset\{3L_0<\cD<5L_0\}$ and $\partial\mathcal B$ is transverse to $\Sigma_0$. In particular, $\Sigma\cap\partial\mathcal B$ is a smooth compact curve, possibly empty or disconnected.
\end{enumerate}
\end{lem}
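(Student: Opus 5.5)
The plan is to obtain $\mathcal B$ as a superlevel set $\{\psi>s\}$ of a smoothing $\psi$ of the distance function $\cD$, at a level $s$ chosen by Sard's theorem. Because $\cD$ is only $1$-Lipschitz, we first mollify at unit scale. Set $\psi=\cD*\rho$, with $\rho$ a standard mollifier supported in $B_{1/2}$. Then $\psi$ is smooth, $|\psi-\cD|\leq1/2$, and $|\nabla\psi|\leq1$. We will take $\mathcal B=\{\psi<s\}$ for some $s\in(4L_0+1/2,\,4L_0+3/2)$.

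\textbf{Inclusions.} With this choice,
\[
 \mathcal S_{4L_0}\subset\{\psi<4L_0+1/2\}\subset\mathcal B,\qquad
 \overline{\mathcal B}\subset\{\psi\leq s\}\subset\{\cD\leq 4L_0+2\}.
\]
To land strictly inside $\mathcal S_{4L_0+2}$, we shrink the range of $s$ slightly, for instance to $(4L_0+1/2,\,4L_0+1)$. Boundedness of $\mathcal B$ comes from the compactness of $\overline{\mathcal X_\Lambda}$. The inclusion $\mathcal S_{4L_0+2}\Subset B_{R_\Lambda}(z_\Lambda)$ follows from \eqref{eq:cluster-radial-bounds}, since every point of $\mathcal X_\Lambda$ lies within $R_\Lambda-8L_0$ of $z_\Lambda$. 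We also have $\partial\mathcal B\subset\{\psi=s\}\subset\{3L_0<\cD<5L_0\}$ for $L_0$ large.

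\textbf{Smoothness and transversality.} Both properties come from Sard's theorem. The level $s$ should be a regular value of $\psi$ on $\mathbb R^3$, so that $\partial\mathcal B=\{\psi=s\}$ is a smooth closed surface. It should also be a regular value of $\psi|_{\Sigma_0\cap\{3L_0<\cD<5L_0\}}$. On the annulus $\{3L_0<\cD<5L_0\}$, the set $\Sigma_0$ is a smooth embedded surface, because this region lies in $\{L_0<\cD<\Lambda/3\}$, where \eqref{eq:clean-gradient-bound} applies. Regularity of $s$ for the restriction means $\nabla_{\Sigma_0}\psi\neq0$ on $\Sigma_0\cap\{\psi=s\}$, which is exactly transversality of $\partial\mathcal B$ and $\Sigma_0$.

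Sard's theorem, applied to each of the two smooth maps, says that almost every $s$ is regular for both, so such an $s$ exists in our interval. Then $\Sigma\cap\partial\mathcal B$ is a transverse intersection of two smooth surfaces inside the compact set $\{\psi=s\}$, hence a smooth compact $1$-manifold, possibly empty or disconnected.

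\textbf{Main obstacle.} The delicate points are only technical. One must ensure that $\{\psi=s\}$ is a genuine boundary, i.e. $\partial\{\psi<s\}=\{\psi=s\}$. This holds at regular values, since $\nabla\psi\neq0$ there and $\psi$ takes values both above and below $s$ near each such point. One must also apply Sard on the noncompact surface $\Sigma_0$; restricting to the relatively open piece inside the annulus handles this, since Sard's theorem holds for any smooth map on a second-countable manifold.

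No lower bound on $|\nabla\psi|$ is needed, since the lemma only asks for smoothness and transversality, not quantitative control. The radius adjustments above absorb the $1/2$ mollification error.
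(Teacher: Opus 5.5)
Your proposal is correct and is essentially the paper's proof: mollify $\cD$ at unit scale, use Sard to pick a common regular value $s\in(4L_0+1/2,4L_0+1)$ of the mollification on $\mathbb R^3$ and on the smooth piece $\Sigma_0\cap\{3L_0<\cD<5L_0\}$, and take $\mathcal B=\{\psi<s\}$ (a sublevel set, not the superlevel set your first sentence mentions). The paper mollifies with error $1/4$ rather than $1/2$, which makes no difference. Shrinking the range of $s$ is harmless but unnecessary, since $s<4L_0+3/2$ already gives $\cD\le s+1/2<4L_0+2$ on $\overline{\mathcal B}$.
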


\begin{proof}
Mollify $\cD$ to obtain a smooth function $\widetilde{\cD}$ with $|\widetilde{\cD}-\cD|<1/4$. Since $\overline{\mathcal X_\Lambda}$ is bounded, $\widetilde{\cD}$ tends to infinity at infinity. By \eqref{eq:cluster-full-distance} and~\eqref{eq:cluster-curvature-bounds}, the zero set is smooth on $\{3L_0<\cD<5L_0\}\subset\{L_0<\cD<\Lambda/3\}$. Sard's theorem gives a common regular value $t_0\in(4L_0+1/2,4L_0+1)$ of $\widetilde{\cD}$ on $\mathbb R^3$ and on this part of $\Sigma_0$. Then $\mathcal B=\{\widetilde{\cD}<t_0\}$ is bounded with smooth boundary, and the approximation bound and~\eqref{eq:cluster-radial-bounds} give \eqref{eq:cluster-smooth-region}. Its boundary lies in $\{3L_0<\cD<5L_0\}$ and is transverse to $\Sigma_0$ by the choice of $t_0$.
\end{proof}

Fix this choice of $\mathcal B$. It plays the role of the bounded bad region in \cite[(10.13)--(10.17)]{CFFS2026}; the smaller set $\mathcal S_2\subset\mathcal B$ carries the positive curvature mass in \eqref{eq:cluster-cover-mass}.

We first introduce the intrinsic distance from the boundary of $\mathcal B$ and the area of its distance neighborhoods.

\begin{defn}[Intrinsic distance and area]\label{defn:intrinsic-distance-area}
For $y\in\Sigma\setminus\mathcal B$, let
\begin{equation*}
 d_{\mathcal B}(y)=
 \operatorname{dist}_{\Sigma\setminus\mathcal B}
       (y,\Sigma\cap\partial\mathcal B),
\end{equation*}
the infimum of lengths of paths in $\Sigma\setminus\mathcal B$ joining $y$ to $\Sigma\cap\partial\mathcal B$. It is $+\infty$ on components that do not meet this boundary, and we extend it by zero on $\Sigma\cap\mathcal B$.

We denote the area of an intrinsic neighborhood by
\begin{equation*}
 \Theta(r)=\mathcal H^2\bigl(\{y\in\Sigma:0<d_{\mathcal B}(y)<r\}\bigr).
\end{equation*}
\end{defn}

The next result relates intrinsic area with the curvature term in stability. We set $(r-d_{\mathcal B})_+=0$ where $d_{\mathcal B}=+\infty$.

\begin{lem}[Relating area and curvature]\label{lem:integrated-curvature}
For $2\leq r<\Lambda/16$ we have
\begin{equation}\label{eq:integrated-curvature}
 \Theta(r)\leq\frac14\int_{\Sigma\setminus\mathcal B}|\mathrm{II}_\Sigma|^2
 (r-d_{\mathcal B})_+^2\,d\mathcal H^2+Cr^2\Theta(2),
\end{equation}
for some universal constant $C$.
\end{lem}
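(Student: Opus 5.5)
The estimate \eqref{eq:integrated-curvature} is an intrinsic area--curvature inequality of Gauss--Bonnet type on the surface $\Sigma\setminus\mathcal B$, in the spirit of the argument of \cite{CFFS2026} (and ultimately of Fischer-Colbrie--Schoen/Pogorelov). I would prove it via the coarea formula for $d_{\mathcal B}$ together with Gauss--Bonnet on the intrinsic sublevel sets $\Omega_s=\{0<d_{\mathcal B}<s\}$. Write $\ell(s)=\mathcal H^1(\{d_{\mathcal B}=s\})$, so that $\Theta'(s)=\ell(s)$ for a.e.\ $s$ ($d_{\mathcal B}$ is $1$-Lipschitz with $|\nabla d_{\mathcal B}|=1$ a.e.). Since $r<\Lambda/16$ and $\partial\mathcal B\subset\{\cD<5L_0\}$, every point with $d_{\mathcal B}<r$ lies in $\{\cD<\Lambda/3\}$. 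Hence all the geometry stays inside $\Sigma$, where the curvature bounds \eqref{eq:cluster-curvature-bounds} hold. By Lemma~\ref{lem:smooth-bad-region}, the initial boundary $\Sigma\cap\partial\mathcal B$ is a smooth compact curve.

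The key steps are as follows. First, the first variation of length gives, for a.e.\ $s$, $\ell'(s)=\int_{\{d_{\mathcal B}=s\}}k_g$ in the distributional sense; cut-locus effects only decrease $\ell$, so this holds as an inequality $\ell'\le\int k_g$. Gauss--Bonnet on $\Omega_s$ then gives
\[
\int_{\{d_{\mathcal B}=s\}}k_g=2\pi\chi(\Omega_s)-\int_{\Omega_s}K-\int_{\Sigma\cap\partial\mathcal B}k_g .
\]
Second, I would bound the Euler characteristic term. Each component of $\Omega_s$ touches $\Sigma\cap\partial\mathcal B$, so no component is a disc without initial boundary, and $\chi(\Omega_s)\le 0$ up to the number of components of the initial curve. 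That number is controlled by $\Theta(2)$ through the local area bound \eqref{eq:clean-local-area}, since every component carries an intrinsic collar of definite area. Third, the Gauss equation gives $-K=\tfrac12|\mathrm{II}_\Sigma|^2-\tfrac12H_\Sigma^2$, and $H_\Sigma^2\le C\cD^{-4}$ is negligible. The initial geodesic-curvature term is also bounded by $C\Theta(2)$, using $|\mathrm{II}|\lesssim L_0^{-1}$ together with a bound on the curvature of $\partial\mathcal B$ relative to $\Sigma$. Altogether this yields
\[
\ell'(s)\le \tfrac12\int_{\Omega_s}|\mathrm{II}_\Sigma|^2+C\Theta(2).
\]
Finally, I would integrate twice. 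Integrating from $2$ to $\rho$ bounds $\ell(\rho)$ by $\ell(2)+\tfrac12\int_2^\rho\int_{\Omega_s}|\mathrm{II}|^2+C\rho\Theta(2)$, and integrating once more in $\rho$ up to $r$ bounds $\Theta(r)-\Theta(2)$. Fubini then gives $\int_2^r\int_2^\rho\mathbf 1_{\{d_{\mathcal B}<s\}}\,ds\,d\rho\le\tfrac12(r-d_{\mathcal B})_+^2$, which produces exactly the constant $\tfrac14$. The remaining terms are $r\,\ell(2)$ and $r^2\Theta(2)$; I would absorb $\ell(2)\lesssim\Theta(2)$ by choosing the starting level well, using the mean value theorem on $[1,2]$ after replacing $2$ by a nearby good level.

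I expect the main obstacle to be the topological and regularity input. This means justifying Gauss--Bonnet and the variation formula for the nonsmooth level sets of $d_{\mathcal B}$, and bounding $\chi(\Omega_s)$ together with the initial boundary term by $C\Theta(2)$. For regularity, I would first try to argue as in \cite{CFFS2026}: approximate by smooth levels and use $\ell'\le\int k_g$ as a one-sided distributional inequality. For the initial terms, I would use that $\partial\mathcal B$ is a level set of the mollified distance, whose Hessian is bounded, so that $k_g$ of $\Sigma\cap\partial\mathcal B$ is pointwise bounded. One point needs care: transversality alone does not give a quantitative angle, so the tangential estimate requires choosing $\mathcal B$ via a quantitative Sard argument.
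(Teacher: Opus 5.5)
Your outer framework is fine: coarea in $d_{\mathcal B}$, a differential inequality for $\ell(s)=\mathcal H^1(\{d_{\mathcal B}=s\})$, the bound $-K\le\frac12|\mathrm{II}_\Sigma|^2$, and a double integration that produces the factor $\frac14$. The genuine gap is the initial curve $\gamma_0=\Sigma\cap\partial\mathcal B$. You bound $2\pi\chi(\Omega_s)$ and $\int_{\gamma_0}k_g$ \emph{separately} by $C\Theta(2)$, and neither bound holds.

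The set $\mathcal B$ is fixed in Lemma~\ref{lem:smooth-bad-region}, and its boundary is only qualitatively transverse to $\Sigma_0$. Nothing excludes many tiny components of $\gamma_0$. For example, take $m$ small \emph{lakes} of radius $\rho\ll1$, clustered in a unit disc of an almost flat sheet, where $\Sigma$ dips slightly into $\mathcal B$ near points at which $\partial\mathcal B$ is nearly tangent to it.
\begin{itemize}
\item Each lake contributes about $2\pi$ to the initial-boundary term, with the sign that enlarges your upper bound for $\ell'$.
\item Their collars overlap, so their contribution to $\Theta(2)$ is $O(1)$, independent of $m$.
\item Tiny \emph{island} components of $\Sigma\setminus\mathcal B$ have collars of tiny area. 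So the claim that every component carries a collar of definite area is false.
\end{itemize}
On the component containing the lakes, $2\pi\chi(\Omega_s)=2\pi(1-m)$ while the boundary term is about $2\pi m$. The two terms cancel, and only their sum is controlled.

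A quantitative Sard choice does not repair this. There is no reason it gives a uniform angle. Its averaged (coarea) version bounds $\int_{\gamma_0}|k_g|$ only by the area of $\Sigma$ in a spatial shell around $\partial\mathcal B$. That area is not controlled by $\Theta(2)$, since sheets of $\Sigma$ spatially close to $\partial\mathcal B$ may be intrinsically far from $\gamma_0$, or not reach it at all. Such an $O(NL_0^3)$ bound would suffice for Section~\ref{sec:classification}. But it is a different statement, it requires redefining $\mathcal B$, and it still requires pairing $\chi$ with the boundary term component by component.

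Separately, your Gauss--Bonnet identity on $\Omega_s$ omits the reflex corners of $\{d_{\mathcal B}=s\}$ at cut points. These corners enter with the unfavourable sign. They are compensated only by the length lost at the cut locus (the Fiala--Hartman inequality), and this does not follow merely from \emph{cut-locus effects only decrease $\ell$}.

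The missing idea, which is the paper's argument, is to start the curvature accounting at an intermediate level, so that $\gamma_0$ never enters.
\begin{itemize}
\item Work ray by ray with the normal geodesics from $\gamma_0$ and their length Jacobian, which satisfies $J_{tt}=-KJ$.
\item Since $K\ge-C^2$, the Riccati inequality $(J_t/J)'+(J_t/J)^2\le C^2$ gives $J_t/J\le C\coth(Ct)$. This is uniform on $[1/2,1]$ and independent of the geodesic curvature of $\gamma_0$.
\item Choose $b\in[1/2,1]$ by coarea with $\mathcal H^1(\{d_{\mathcal B}=b\})\le2\Theta(2)$. This is your \emph{good starting level}.
\item Integrate the Jacobi equation twice from $b$: $J(q,t)\le J(q,b)+(t-b)J_t(q,b)_++\int_b^t(t-s)K_-J\,ds$.
\item Integrate over rays by the area formula. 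Cut points only shorten rays, which helps.
\end{itemize}
This gives $\Theta(r)\le Cr^2\Theta(2)+\frac12\int_{\Sigma\setminus\mathcal B}K_-(r-d_{\mathcal B})_+^2$, with no topology and no transversality estimate, and $K_-\le\frac12|\mathrm{II}_\Sigma|^2$ concludes.

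If you want to keep Gauss--Bonnet, apply it on the components of $\{b<d_{\mathcal B}<s\}$ that reach $\{d_{\mathcal B}=s\}$. These have at least two boundary curves, hence $\chi\le0$. Use the same Riccati bound for the curvature of the inner boundary $\{d_{\mathcal B}=b\}$.
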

\begin{proof}
An analogous area--curvature estimate appears in \cite[Lemma~10.16 and the proof of Proposition~10.17]{CFFS2026}. For completeness, we  give a self-contained proof using normal Jacobi fields in Appendix~\ref{sec:intrinsic-area-proof}.
\end{proof}

Finally, we can give the proof of the main result:

\begin{proof}[Proof of Theorem~\ref{thm:conditional-classification}]
Suppose that $u$ is not one-dimensional. Then $\mathcal X\ne\varnothing$ by Remark~\ref{rem:bad-set-nonempty}. Choose a sufficiently large universal $L_0$, and then $\Lambda$ sufficiently large depending only on $L_0$.

From now on, we put ourselves in the setting of Definition~\ref{defn:bounded-cluster}. Consider the smooth set $\mathcal B$ constructed in Lemma~\ref{lem:smooth-bad-region}, together with the intrinsic distance and area from Definition~\ref{defn:intrinsic-distance-area}. Note that Proposition~\ref{prop:cluster-stability} applies to every admissible $\varphi$, and that the lower thresholds for $L_0$ and $\Lambda$ depend only on universal constants, independently of the number $N$ of bad centers and the radius $R_\Lambda$ of the cluster.

\noindent\textbf{Step 1.} Quadratic area growth.
We first show that
\begin{equation}\label{eq:block-classification-area}
 \Theta(r)\leq Cr^2\Theta(2)\quad\text{for}\quad 2\leq r<\Lambda/16,\qquad \mbox{with}\quad
 \Theta(2)\leq CNL_0^3.
\end{equation}

Indeed, fix $r$ with $2\leq r<\Lambda/16$, and put $\varphi=1$ on $\Sigma_0\cap\mathcal B$ and $\varphi=(1-d_{\mathcal B}/r)_+$ on ${\Sigma\setminus\mathcal B}$, with zero on the remaining zero set. Its support outside $\mathcal B$ lies in
\begin{equation}\label{eq:block-classification-cutoff-support}
 \{4L_0\leq \cD\leq4L_0+2+r\}
 \Subset\{L_0<\cD<\Lambda/4\}.
\end{equation}
Thus $\varphi$ is admissible for Proposition~\ref{prop:cluster-stability}, which together with Lemma~\ref{lem:integrated-curvature} gives

\begin{equation*}
\begin{aligned}
 \frac{\Theta(r)}{r^2}
 &\leq\frac14\int_{\Sigma\setminus\mathcal B}|\mathrm{II}_\Sigma|^2
                    (1-d_{\mathcal B}/r)_+^2+C\Theta(2) \leq\frac38\int_\Sigma|\nabla_\Sigma\varphi|^2+C\Theta(2)
  =\frac38\frac{\Theta(r)}{r^2}+C\Theta(2).
\end{aligned}
\end{equation*}
Absorbing proves the first estimate in \eqref{eq:block-classification-area}. Finally, the bound for $\Theta(2)$ follows from \eqref{eq:cluster-cubic-area}, since $\{0<d_{\mathcal B}<2\}\subset\Sigma\cap\{\cD<4L_0+4\}$.

\noindent\textbf{Step 2.} Log-cutoff and contradiction.
We consider a second test function, with $\varphi=1$ on $\Sigma_0\cap\mathcal B$ and
\begin{equation}\label{eq:block-classification-log-cutoff}
 \varphi=
 \left(1-\frac{\log(\max\{2,d_{\mathcal B}\}/2)}
                      {\log(\Lambda/64)}\right)_+\qquad \mbox{on} \quad {\Sigma\setminus\mathcal B}.
\end{equation}
Put again $\varphi=0$ on the remaining zero set and where $d_{\mathcal B}=+\infty$. Its exterior support has $d_{\mathcal B}\leq\Lambda/32$, so \eqref{eq:block-classification-cutoff-support} with $r=\Lambda/32$ proves admissibility. On $2^j<d_{\mathcal B}<2^{j+1}$, \eqref{eq:block-classification-log-cutoff} gives $|\nabla_\Sigma\varphi|\leq2^{-j}/\log(\Lambda/64)$. The intervals meeting its gradient support have $j\geq1$ and $2^j<\Lambda/32$, hence $2^{j+1}<\Lambda/16$. Thus \eqref{eq:block-classification-area} applies on each interval and gives
\begin{equation*}
\begin{aligned}
 \int_\Sigma|\nabla_\Sigma\varphi|^2
 &\leq\frac1{\log^2(\Lambda/64)}
   \sum_{\substack{j\geq1\\2^j<\Lambda/32}}2^{-2j}\Theta(2^{j+1}) \leq\frac{C\Theta(2)}{\log\Lambda}
 \leq\frac{CNL_0^3}{\log\Lambda}.
\end{aligned}
\end{equation*}
Combining this with \eqref{eq:block-classification-stability} gives $ c\delta N\leq\frac{CNL_0^3}{\log\Lambda} $ which, for $\Lambda$ sufficiently large (depending only on $L_0$), gives a contradiction. Thus $u$ is one-dimensional. The only stable one-dimensional solutions are the constants $\pm 1$ and translates of the heteroclinic profile $g$, which together with $|u|<1$ gives $u(x)=g(e\cdot x-t_0)$.
\end{proof}

\appendix
\section{Proof of Lemma~\ref{lem:integrated-curvature}}\label{sec:intrinsic-area-proof}

\begin{proof}[Proof of Lemma~\ref{lem:integrated-curvature}]
We use the notation of Section~\ref{sec:intrinsic-curvature}.

By construction, $\Sigma\cap\partial\mathcal B$ is a smooth compact curve, possibly empty or disconnected. Every exterior path of length at most $r$ starting on ${\Sigma\cap\partial\mathcal B}$ lies in
\begin{equation}\label{eq:block-curvature-distance-region}
 \Sigma_0\cap\{4L_0\leq \cD\leq4L_0+2+r\}\Subset\Sigma,
 \qquad r\leq\Lambda/16.
\end{equation}
The lower bound follows from $\mathcal S_{4L_0}\subset\mathcal B$, and the upper bound from the fact that $\cD$ is $1$-Lipschitz. If $\Sigma\cap\partial\mathcal B=\varnothing$, then $d_{\mathcal B}=+\infty$ on $\Sigma\setminus\mathcal B$ and the conclusion \eqref{eq:integrated-curvature} is immediate.

Let $K$ denote the Gaussian curvature of $\Sigma$, and note that
\begin{equation}\label{eq:block-curvature-lower-bound}  
|\mathrm{II}_\Sigma|\leq\frac{|D^2u|}{|\nabla u|}\leq C,  \qquad K\geq-\tfrac12|\mathrm{II}_\Sigma|^2\geq-C^2
\end{equation}
for a fixed universal $C\geq1$. A minimizing path from $\Sigma\cap\partial\mathcal B$ cannot meet this boundary again. For each $q\in\Sigma\cap\partial\mathcal B$, consider the unit-speed geodesic starting at $q$, normal to this curve within $\Sigma$, and pointing into $\Sigma\setminus\mathcal B$. Write $\kappa(q)$ for its cut time, up to which it minimizes distance from the inner boundary, and $J(q,t)$ for the length Jacobian, with arclength on the inner boundary as the initial coordinate. Off the cut locus, which has surface measure zero, every point at finite distance below $\Lambda/16$ lies on a unique minimizing normal ray. The Jacobi equation and the area formula give
\begin{equation}\label{eq:block-curvature-rays}
 \begin{split}
 &J_{tt}(q,t)=-K(q,t)J(q,t),\qquad J(q,0)=1,
 \qquad J(q,t)>0\quad\text{for}\quad 0<t<\kappa(q), \\
 &\Theta(s)=\int_{\Sigma\cap\partial\mathcal B}\int_0^{\min\{s,\kappa(q)\}}
                  J(q,t)\,dt\,d\mathcal H^1(q)
 \quad\text{for}\quad 0<s<\Lambda/16.
 \end{split}
\end{equation}

For $0<t<\kappa(q)$, \eqref{eq:block-curvature-rays} and \eqref{eq:block-curvature-lower-bound} give
\begin{equation}\label{eq:block-curvature-comparison}
 \left(\frac{J_t}{J}\right)'
       +\left(\frac{J_t}{J}\right)^2=-K\leq C^2,\qquad
 \frac{J_t}{J}\leq C\coth(Ct).
\end{equation}
The second inequality in \eqref{eq:block-curvature-comparison} follows by comparison, since $J_t/J$ is finite at zero and $C\coth(Ct)\to+\infty$ there. In particular, the resulting bound is uniform on $[1/2,1]$ and independent of the inner boundary curvature.

We now choose a level in this interval whose length is also controlled. Since $d_{\mathcal B}$ is locally Lipschitz on its finite-distance components, with $|\nabla_\Sigma d_{\mathcal B}|=1$ almost everywhere at positive distance, coarea gives $b\in[1/2,1]$ such that $\mathcal H^1(\{d_{\mathcal B}=b\})\leq2\Theta(2)$. We may also choose this level to meet the cut locus in a set of length zero. Combining this choice with the comparison estimate gives
\begin{equation}\label{eq:block-curvature-slice}
 \begin{split}
 &J_t(q,b)_+\leq CJ(q,b)\quad\text{whenever}\quad \kappa(q)>b, \qquad \int_{\{q\in{\Sigma\cap\partial\mathcal B}:\kappa(q)>b\}}J(q,b)\,d\mathcal H^1(q)
 =\mathcal H^1(\{d_{\mathcal B}=b\})\leq2\Theta(2).
 \end{split}
\end{equation}

Fix $2\leq r<\Lambda/16$. For $b<t<\kappa(q)$, twice integrating the Jacobi equation gives
\[
 J(q,t)\leq J(q,b)+(t-b)J_t(q,b)_+
       +\int_b^t(t-s)K(q,s)_-J(q,s)\,ds.
\]
Integrate first over $b<t<\min\{r,\kappa(q)\}$ and then over the rays with $\kappa(q)>b$, taking a limit from below if the cut time is reached. To recover $\Theta(r)$, we also include the points at distance at most $b$, whose area is bounded by $\Theta(2)$. Adding this contribution and applying Fubini's theorem together with \eqref{eq:block-curvature-rays}--\eqref{eq:block-curvature-slice}, we obtain
\[
\begin{aligned}
 \Theta(r)&\leq Cr^2\Theta(2)
       +\frac12\int_{\Sigma\setminus\mathcal B} K_-\,(r-d_{\mathcal B})_+^2\,d\mathcal H^2 \leq Cr^2\Theta(2)+\frac14\int_{\Sigma\setminus\mathcal B}|\mathrm{II}_\Sigma|^2
                 (r-d_{\mathcal B})_+^2\,d\mathcal H^2,
\end{aligned}
\]
since $K_-\leq|\mathrm{II}_\Sigma|^2/2$ by \eqref{eq:block-curvature-lower-bound}.
\end{proof}

{
\renewcommand{\bibliofont}{\scriptsize}
\bibliographystyle{plain}
\bibliography{AllenCahn3D_combined}
}
\end{document}